\theoremstyle{plain}
\newtheorem{quest}{Question}[section]
\newtheorem{prob}[quest]{Problem}
\newtheorem{defin}[quest]{Definition}
\newtheorem{thm}[quest]{Theorem}
\newtheorem{prop}[quest]{Proposition}
\newtheorem{corollary}[quest]{Corollary}
\newtheorem{cor}[quest]{Corollary}
\newtheorem{lemma}[quest]{Lemma}
\theoremstyle{remark}
\newtheorem{example}[quest]{Example}
\newtheorem{remark}[quest]{Remark}
\newcommand{\ideal}{u}
\newcommand{\Ideal}{U}
\newcommand{\R}{\mathbb{R}}
\newcommand{\N}{\mathbb{N}}
\newcommand{\K}{\mathbb{K}}
\newcommand{\C}{\mathbb{C}}
\newcommand{\T}{\mathbb{T}}
\newcommand{\Z}{\mathbb{Z}}
\newcommand{\adef}{\begin{defin}}
\newcommand{\zdef}{\end{defin}}
\newcommand{\algebra}{\mathcal A}
\newcommand{\Space}{{\rm Space}}
\newcommand{\In}{{\rm In}}
\newcommand{\Imp}{{\rm Imp}}
\newcommand{\Op}{{\rm Op}}
\newcommand{\op}{{\rm op}}
\newcommand{\Opp}{{\rm Op^{<\omega}}}
\newcommand{\aproof}{\begin{proof}}
\newcommand{\zproof}{\end{proof}}
\title{There is no largest proper operator ideal}
\author{Valentin Ferenczi}
\address{Departamento de Matem\'atica, Instituto de Matem\'atica e
Estat\'\i stica, Universidade de S\~ao Paulo, rua do Mat\~ao 1010,
05508-090 S\~ao Paulo SP, Brazil  \\ and \newline
Equipe d'Analyse Fonctionnelle \\
Institut de Math\'ematiques de Jussieu \\
Sorbonne Universit\'e - UPMC \\
Case 247, 4 place Jussieu \\
75252 Paris Cedex 05 \\
France.}
\email{ferenczi@ime.usp.br}
\keywords{operator ideal, improjective operators, proper ideal, complex structures, exotic Banach spaces}
\subjclass[2010]{Primary: 47L20. Secundary: 46B03, 46J10, 47B10.}
\thanks{The author acknowledges the support of FAPESP, project 2016/25574-8,
and of CNPq, grant 303731/2019-2}
\begin{document}

\begin{abstract} 
An operator ideal is proper if the only operators of the form ${\rm Id}_X$ it contains have finite rank.
We answer a question posed by Pietsch in 1979 (\cite{P}) by proving that there is no largest proper operator ideal. 
Our proof is based on an extension of the construction by Aiena-Gonz\'alez (\cite{AG}, 2000),  of an improjective 
but essential operator on Gowers-Maurey's shift space $X_S$ (\cite{GM2}, 1997), through a new analysis of the algebra
of operators on powers of $X_S$.

We also prove that certain properties hold for general $\C$-linear
operators if and only if they hold for these operators seen as real: for example this holds for the ideals of
strictly singular, strictly cosingular, or inessential operators,  answering a question of Gonz\'alez-Herrera (\cite{GH}, 2007). 
This gives us a frame to extend the negative answer to the question of Pietsch to the real setting.
\end{abstract}

\maketitle

\tableofcontents

\section{Introduction}

In this paper we consider operator ideals (or more generally, families of operators) in the sense of Pietsch \cite{P}. 
Unless specified otherwise by space we mean infinite dimensional Banach space and by subspace we mean closed infinite
dimensional subspace. An operator will be a bounded linear operator between Banach spaces, and $L(X,Y)$ denotes
the space of operators between the spaces $X$ and $Y$. If $\Ideal$ is an operator ideal, then $\Ideal(X,Y)$ is the
subset of operators of $L(X,Y)$ belonging to $\Ideal$.
For all other unexplained notation see what follows.

In his book Pietsch  considers a family of spaces associated to an ideal $\Ideal$, see \cite{P} 2.1: the {\em space ideal}  $\Space(\Ideal)$
defined by
$$X \in \Space(\Ideal) \Leftrightarrow Id_X \in \Ideal.$$

Of course this definition makes sense even when $\Ideal$ is a family of operators which is not an ideal.
Note that it is immediate that the space ideal of 
$\Ideal$ coincides with the space ideal of its closure $\Ideal^{\rm clos}$, \cite{P} Proposition 4.2.8, so 
in this context
one does not need to pay attention to whether the ideals considered are closed.
In \cite{P} 2.3.3 an ideal $\Ideal$ is called {\em proper} if $\Space(\Ideal) = F$, the class of finite dimensional spaces; or equivalently, if $\Ideal(X)$ is a proper ideal of $L(X)$ whenever $X$ is infinite dimensional.
Among proper ideals one can mention the ideals of finite rank, compact, strictly singular, strictly cosingular, or inessential operators,
see definitions below.
Problem \cite{P} 2.3.6 asks whether there is a largest proper operator ideal.
It is actually conjectured by Pietsch that such an ideal exists and is equal to the ideal $\In$ of inessential operators
(a specific case of \cite{P}  Conjecture 4.3.7, see \cite{P} 4.3.1).

\

\begin{prob}[Pietsch, 1979]\label{prob} Is the ideal of inessential operators the largest proper operator ideal?\end{prob}

\begin{prob}[Pietsch, 1979]\label{probbis} More generally, does there exist a largest proper operator ideal?
\end{prob}

\

This can also be seen as a special case of  \cite{P} Problem 2.2.8, where Pietsch asks whether, given a space ideal $A$
(see \cite{P} Definition 2.1.1), 
there exists a largest operator ideal $U$ with $A = \Space(U)$. Problems \ref{prob} and \ref{probbis} correspond to the space ideal
$F$ of finite dimensional spaces for which $F=\Space(\In)$.

\

Recall that an operator $T \in L(X,Y)$ is said to be {\em inessential}, $T \in \In(X,Y)$, if $Id_X-UT$ is Fredholm for any
$U \in L(Y,X)$ (equivalently $Id_Y-TU$ is Fredholm for any  such $U$); otherwise we shall say that it is essential. Two spaces
$X$ and $Y$ are {\em essentially incomparable} if $L(X,Y) = \In(X,Y)$; equivalently, $L(Y,X) = \In (Y,X)$.




There is a natural direction in which to investigate  whether $\In$ is the largest proper operator ideal, which
was suggested to the author by Manuel Gonz\'alez.
This would be to study the question in the setting of complex spaces as well as real spaces and obtain strong structural differences
between the complex and the real cases. Indeed if some $\C$-linear operator is essential as real but inessential as complex,
then this might mean that one gets a larger proper ideal than the complex ideal of inessential operators.

More generally it is a natural question, related to the study of complex structures on real Banach spaces,
to understand the differences between real and complex versions of some classical operator ideals,
and this is a first aim of this paper. More precisely we ask 
 whether a $\C$-linear operator belongs to a certain ideal as  $\C$-linear
if and only if it does as an $\R$-linear operator. It is obvious for example that an operator is compact as $\C$-linear if and only if
it is compact as $\R$-linear.
The question for strictly singular appears in \cite{GH} as Remark 2.7,
and for inessential was personally asked by M. Gonz\'alez.

While an $\R$-singular (resp. $\R$-inessential), $\C$-linear operator is clearly always $\C$-singular (resp. $\C$-inessential), the converse
is not immediate, since 
there are more real subspaces (resp. operators) than complex subspaces (resp. operators) in a complex space.
However we shall show that the answer is actually positive, and holds also for many other classical ideals.
The result depends on a characterization
based on the notion of {\em self-conjugacy} of a complex ideal, see Proposition \ref{11}.

\begin{thm}\label{1}  A $\C$-linear operator is inessential as a complex operator if and only if it is inessential as a  real operator. The same holds for the ideals of
\begin{itemize}
 \item 
strictly singular operators,
\item
strictly cosingular operators, 
\item $A$-factorable operators if $A$ is a complex and self-conjugate space ideal.
\end{itemize}
\end{thm}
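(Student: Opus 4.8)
The plan is to reduce the statement, in each of the four cases, to two facts about the complex ideal $\mathcal I$ in play: its behaviour under complexification, and its \emph{self-conjugacy}. For a complex space $X$ write $X_\R$ for its realification and $\overline X$ for its complex conjugate; for a real space $W$ write $W_\C$ for its complexification and, for a real operator $S\colon W\to Z$, write $S_\C\colon W_\C\to Z_\C$ for the complexified operator. As already noted in the text, an $\R$-singular (respectively $\R$-cosingular, $\R$-inessential, or $A$-factorable over $\R$) $\C$-linear operator is automatically so as a complex operator, so in each case only the converse needs proof: fixing a $\C$-linear $T\colon X\to Y$ that lies in the complex ideal $\mathcal I$, I would show that $T$, regarded as an $\R$-linear operator from $X_\R$ to $Y_\R$, lies in the corresponding real ideal $\mathcal I_\R$. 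The first ingredient is the classical decomposition: for a complex space $X$ there is a $\C$-linear isomorphism $(X_\R)_\C\cong X\oplus\overline X$, realised by the $\pm i$-eigenspaces of the complex-linear extension to $(X_\R)_\C$ of the complex structure of $X$; since a $\C$-linear $T\colon X\to Y$ commutes with the complex structures, $(T)_\C$ commutes with their extensions, preserves these eigenspaces, and corresponds under the identification to $T\oplus\overline T$, where $\overline T\colon\overline X\to\overline Y$ is $T$ viewed between conjugate spaces. I would establish or cite this once and reuse it.

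The second ingredient is a transfer lemma: for each of the four ideals, a real operator $S\colon W\to Z$ lies in $\mathcal I_\R$ if and only if $S_\C$ lies in $\mathcal I(W_\C,Z_\C)$. For inessential operators this follows from the Fredholm characterization together with the elementary fact that a real operator is Fredholm precisely when its complexification is (the kernel and cokernel merely double in dimension), applied to $\mathrm{Id}-RS$; in the nontrivial direction one uses that a $\C$-linear $R$ on $W_\C$ is in particular $\R$-linear and that $W_\C = W\oplus W$ over $\R$, with the coordinate injections and projections being real operators. For strictly singular (respectively strictly cosingular) operators one argues in the same spirit: a complex infinite-dimensional subspace of $W_\C$ on which $S_\C$ is bounded below is a fortiori an infinite-dimensional real subspace of $W\oplus W$, and composing with a real coordinate projection, together with the fact that $\mathcal I_\R$ is a genuine real operator ideal, yields an infinite-dimensional real subspace of $W$ (respectively quotient of $Z$) on which $S$ is not small, witnessing $S\notin\mathcal I_\R$. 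For $A$-factorable operators the equivalence, applying the decomposition above to the intermediate space, is a restatement of the assumption that $A$ is complex and self-conjugate (so $V\in A\iff\overline V\in A$, whence $V_\R\in A_\R\iff V_\C=V\oplus\overline V\in A$).

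The third ingredient is self-conjugacy, and this is where the characterization of Proposition~\ref{11} is used: each of the four complex ideals satisfies $T\in\mathcal I(X,Y)\iff\overline T\in\mathcal I(\overline X,\overline Y)$. For the first three this is immediate because conjugation $X\mapsto\overline X$ is an $\R$-linear isometry and the defining conditions --- being Fredholm, being or failing to be an isomorphism on some infinite-dimensional subspace or quotient --- are insensitive to it; for $A$-factorable operators it is precisely the hypothesis that $A$ is self-conjugate. Combining the three ingredients, for any of the four ideals one has
\[
T\in\mathcal I_\R\ \Longleftrightarrow\ (T)_\C\in\mathcal I\ \Longleftrightarrow\ T\oplus\overline T\in\mathcal I\ \Longleftrightarrow\ T\in\mathcal I\ \text{and}\ \overline T\in\mathcal I\ \Longleftrightarrow\ T\in\mathcal I,
\]
where the first equivalence is the transfer lemma, the second is the decomposition, the third is the fact that an operator ideal both contains and is generated by its direct summands, and the last is self-conjugacy; and by the transfer lemma $\mathcal I_\R$ is exactly the real ideal of inessential (respectively strictly singular, strictly cosingular, $A$-factorable) operators, so this is the theorem.

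I expect the main obstacle to be the transfer lemma, specifically its ``only if'' direction for the strictly singular and strictly cosingular cases: one must convert a complex subspace of $W_\C$ on which $S_\C$ is an isomorphism into a real subspace of $W$ on which $S$ is bounded below, keeping control of closedness and of uniform lower bounds, and one must verify that the several canonical identifications in play ($W_\C = W\oplus W$ over $\R$, $(X_\R)_\C\cong X\oplus\overline X$ over $\C$, and conjugation) are mutually compatible. The remaining verifications are routine manipulations with the operator-ideal axioms.
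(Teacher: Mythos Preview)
Your proposal is correct and follows essentially the same route as the paper: the decomposition $(X_\R)_\C\cong X\oplus\overline X$ with $(T_\R)_\C\cong T\oplus\overline T$, the transfer lemma (the paper's Proposition~3.2, i.e.\ $U_\R=u$), and self-conjugacy are exactly the three ingredients the paper assembles, and your final chain of equivalences is the content of Proposition~\ref{11} combined with Corollary~3.3 and Proposition~3.6. One minor remark: the ``main obstacle'' you anticipate in the transfer lemma for the strictly singular and cosingular cases is not actually delicate---once you observe that $(S_\C)_\R$ is $S\oplus S$ on $W\oplus W$ and that the real ideal is closed under direct sums, both directions are immediate, and the paper disposes of them in one line each without needing to pass to a real subspace of $W$ by hand.
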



\

Going back to Pietsch's problem, in particular the direction suggested above does not work.
In a second part of the paper we use another approach to  Problems \ref{prob} and \ref{probbis}, which we shall actually solve negatively.

\

An operator is {\em improjective}, $T \in \Imp(X,Y)$, if the restriction of $T$ to a complemented subspace of $X$ is never
an isomorphism onto a complemented subspace of $Y$, see Tarafdar \cite{Tarafdar}. When $L(X,Y)=\Imp(X,Y)$ (equivalently $L(Y,X)=\Imp(Y,X)$),
then $X$ and $Y$ are said to be {\em projectively incomparable}. It is straightforward that all inessential operators
are improjective, and that $Id_X$ is never improjective for $X$ infinite dimensional.


\

In 2000, Aiena and Gonz\'alez proved that there exist operators which are improjective but not inessential, \cite{AG} Theorem 3.6. 
Actually they obtain two projectively incomparable spaces and an operator between them which is essential, \cite{AG} Proposition 3.7. 
This suggests a direction to find a proper ideal larger than $\In$, providing a negative answer to Problem \ref{prob}: since $Id_X \in \Imp$
only when $X$ is finite dimensional, we would be done if $\Imp$ were an operator ideal.
However in the same paper Aiena and Gonz\'alez  prove that the improjective operators do not form an ideal, \cite{AG} Theorem 3.6.

The example of \cite{AG} relies on the theory of  spaces with few operators (or exotic spaces) of Gowers-Maurey, see \cite{maureyhandbook}.
As commented in the Aiena-Gonzalez paper, while hereditarily indecomposable spaces (first defined by Gowers-Maurey \cite{GM}) have the
property that all operators are either Fredholm or inessential,
on the other hand, in indecomposable spaces operators are either Fredholm or improjective; so it is natural to consider
an indecomposable space which is not HI. Their example is therefore based on the ``shift space" $X_S$ of Gowers-Maurey \cite{GM2}
which has these properties, see also Maurey's surveys \cite{maureyhandbook} and \cite{maurey} for a more thorough description.
Considering the complex version of $X_S$, they find a infinite codimensional subspace $Y$ of $X_S$
which is projectively incomparable with $X_S$; however there is an operator $T \in L(X_S,Y)$ which is not inessential.

If $X$ is a Banach space, $\Op(X)$ denotes the family of $X$-factorable operators.
This is an ideal if, e.g., $X$ is isomorphic to its square. It is easy to see that two spaces $X$ and $X'$
are projectively incomparable if and only if  $\Op(X) \cap \Op(X')$ is proper. So in particular
$\Op(X_S) \cap \Op(Y)$ is proper and contains an operator which is not inessential. A negative answer to Problem \ref{prob}
would follow if $\Op(X_S) \cap \Op(Y)$ were an ideal; but since $X_S$ is not isomorphic to its square this has no
reason to hold.

In this paper we show how to enhance Aiena-Gonz\'alez's result so that the associated $\Op$-class is an ideal: 
we define $\Op^{<\omega}(X)$ the class  of operators which are $X^n$-factorable for some
  $n \in \N$ and observe that it is an ideal. The crucial point is then to go back to the construction of \cite{GM2} to
  prove that all powers of the spaces
$X_S$ and $Y$ (or possibly some technical variation of them) are projectively incomparable, which means that
$\Ideal:=\Op^{<\omega}(X_S) \cap \Op^{<\omega}(Y)$ is a proper ideal. Since the essential operator $T$ defined in \cite{AG} belongs
to $\Ideal$, the ideal of inessential operators is not the largest among proper ideals. 
This answers Question \ref{prob} of Pietsch.

\

Based on the observation of Aiena-Gonz\'alez that their construction actually provides
an example of two improjective operators whose sum is not improjective,  we find two versions
of the above ideal and two operators belonging to each of them but whose sum is invertible on
$X_S$. As
a corollary there actually cannot exist a largest proper ideal. So we have a stronger result,
namely the answer
to Question \ref{probbis} of Pietsch is also negative.
\begin{thm}\label{2} There is no largest complex proper operator ideal. \end{thm}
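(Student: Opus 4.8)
The plan is to combine the two ideas already in play in the paper: the ``$\Opp$-trick'' that repairs Aiena--Gonz\'alez's improjective example into a genuine ideal, and Aiena--Gonz\'alez's second observation that their construction yields two improjective operators with non-improjective (indeed invertible) sum. Concretely, I would argue by contradiction: suppose $\Ideal_0$ is a largest proper operator ideal. First I would recall from the discussion above that one produces, from the shift space $X_S$ (or the required technical variant) and the infinite-codimensional subspace $Y$, \emph{two} operators $T_1 \in L(X_S,Y)$ and $T_2 \in L(Y,X_S)$ — or a suitable pair obtained by composing with the inclusion $Y\hookrightarrow X_S$ and a projection-type map — such that each $T_i$ is improjective but $T_2 \circ T_1$ (or $T_1+T_2$ read inside $L(X_S)$ after identifying via the relevant factorizations) is invertible on $X_S$. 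The key structural input, already asserted in the excerpt, is that all powers of $X_S$ and of $Y$ are projectively incomparable, so that both $\Ideal^{(1)}:=\Opp(X_S)\cap\Opp(Y)$ and any parallel variant $\Ideal^{(2)}$ built from the technical modification are \emph{proper} operator ideals, and each contains the corresponding essential operator $T_i$.

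Next I would use the hypothesised largest proper ideal $\Ideal_0$. Since $\Ideal^{(1)}$ and $\Ideal^{(2)}$ are proper, maximality gives $\Ideal^{(1)}\subseteq\Ideal_0$ and $\Ideal^{(2)}\subseteq\Ideal_0$. Hence $T_1\in\Ideal_0$ and $T_2\in\Ideal_0$. But an operator ideal is closed under sums and under composition with arbitrary operators on both sides; therefore the operator $S:=T_2T_1\in\Ideal_0(X_S)$ (or the relevant sum/composite that the construction makes invertible). Since $S$ is invertible on $X_S$, we get $\mathrm{Id}_{X_S}=S^{-1}S\in\Ideal_0(X_S)$, so $X_S\in\Space(\Ideal_0)$, contradicting properness of $\Ideal_0$ (as $X_S$ is infinite dimensional). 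This contradiction shows no largest proper operator ideal exists.

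The main obstacle — and the part where the real work lies — is arranging that the \emph{same} ambient maximal ideal $\Ideal_0$ must swallow two ideals whose members combine to an invertible operator. The delicate point is that Aiena--Gonz\'alez's ``two improjective operators with bad sum'' live a priori in different $L(X,Y)$'s, so one must set up the two proper ideals $\Ideal^{(1)},\Ideal^{(2)}$ so that both $T_1,T_2$ can be regarded as elements of operator ideals (not merely as improjective operators), and so that the composite/sum witnessing non-properness genuinely lands as an identity (up to finite rank) on an infinite-dimensional space. This in turn rests on the projective incomparability of \emph{all powers} of $X_S$ and $Y$, which is exactly the new analysis of the operator algebra on powers of $X_S$ that the paper carries out; granting that input (stated earlier in the excerpt), the deduction of Theorem~\ref{2} is the short maximality argument sketched above. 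A final routine check is that the resulting families $\Opp(X_S)\cap\Opp(Y)$ are honest operator ideals — closed under addition and two-sided composition — which follows because $\Opp(X)$ is an ideal for every $X$ (the point of passing from $\Op$ to $\Opp$) and the intersection of two ideals is an ideal.
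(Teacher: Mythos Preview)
Your overall strategy is exactly the paper's: assume a largest proper ideal $\Ideal_0$ exists, show it must contain two specific proper ideals of the form $\Opp(X_S)\cap\Opp(Y_\bullet)$, and deduce that $\Ideal_0$ contains an invertible operator on $X_S$. The maximality argument you sketch in your second paragraph is precisely what the paper does.

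Where your proposal is too vague, and in one place misleading, is the concrete construction of the two ideals and the two operators. You speak of a single subspace $Y$ and ``a parallel variant $\Ideal^{(2)}$ built from the technical modification'', and you offer two options for how $T_1,T_2$ combine: a composition $T_2\circ T_1$, or a sum. The composition route does not work here: if $T_1\in L(X_S,Y)$, $T_2\in L(Y,X_S)$ and $T_2T_1$ were invertible on $X_S$, then $T_1$ would embed $X_S$ complementably into $Y$, contradicting the very projective incomparability you are using to make $\Opp(X_S)\cap\Opp(Y)$ proper. The paper instead uses \emph{two different} infinite-codimensional subspaces $Y_1,Y_{-1}\subset X_S$, obtained as the ranges of $T_t:=\mathrm{Id}-tS+K_t$ for $t=\pm 1$ (with $K_t$ compact chosen so that the range is infinite-codimensional). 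Each $T_t$, viewed in $L(X_S,X_S)$ via the inclusion $Y_t\hookrightarrow X_S$, lies in the proper ideal $\Ideal_t:=\Opp(X_S)\cap\Opp(Y_t)$. The point is then purely algebraic: $T_1+T_{-1}=2\,\mathrm{Id}+K_1+K_{-1}$ is a compact perturbation of $2\,\mathrm{Id}$, hence Fredholm, and so $\mathrm{Id}_{X_S}\in\Ideal_0$. So your ``sum'' option is the right one, but it requires two distinct subspaces $Y_1,Y_{-1}$ (hence two distinct proper ideals), not a single $Y$ with a vague ``variant''. Once you plug in this explicit pair, your argument becomes the paper's proof verbatim.
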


These examples hold in the complex setting. We use some ideas of the first part of the paper to extend our negative answers to the real setting as well.
To be able to treat both the complex and real cases in a unified way, we shall replace the complex version (call it $X_S(\C)$) of $X_S$
used in the above description, by the complexification $X:=(X_S(\R))_\C$ of the real version $X_S(\R)$ of $X_S$. 
While these two spaces are certainly not isomorphic,
their algebras of operators have very similar properties, sufficiently for our purposes, and so all of the above applies
to $X$. But additionally $X$ is much easier to relate
to a real space (through complexification), and this will provide us with
a real solution based on operators on $X_S(\R)$.

\begin{thm}\label{3} There is no largest real proper operator ideal. \end{thm}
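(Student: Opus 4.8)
The plan is to transfer the negative answer to Pietsch's problems from the complex setting (Theorems \ref{2} and \ref{3}) to the real setting by complexification, using the self-conjugacy machinery of the first part of the paper. Recall that by construction $X = (X_S(\R))_\C$ is the complexification of a \emph{real} space, so it comes with a canonical conjugation, and the complex proper ideal $\Ideal = \Opp(X_S(\C)) \cap \Opp(Y)$ producing the negative answer to Problem \ref{prob} can, by the remarks at the end of the introduction, be chosen to live on $X$ and its subspaces, which are themselves complexifications. The key point is that the essential operator $T$ witnessing that $\Ideal$ is strictly larger than $\In$, and the two operators with invertible sum witnessing Theorem \ref{2}, are all $\C$-linear operators between complexifications, hence restrict from, or are induced by, $\R$-linear operators on $X_S(\R)$ and its real subspaces.

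The first step is to set up a correspondence between real operator ideals on the family of spaces $X_S(\R)^n$ and complex self-conjugate operator ideals on the family $X^n = (X_S(\R)^n)_\C$. Given a real ideal $\mathcal J$ one forms its ``complexification'' $\mathcal J_\C$, and given a complex self-conjugate ideal one restricts along the embedding $L(E,F) \hookrightarrow L(E_\C, F_\C)$; by Proposition \ref{11} and Theorem \ref{1}, membership is preserved in both directions for the classical ideals, and more importantly the \emph{order} between ideals is preserved, as is properness (since $\Space$ of a complexified ideal is exactly the complexification of the space ideal, and a complexification is finite-dimensional iff the original space is). Thus a real ideal $\mathcal J$ is proper iff $\mathcal J_\C$ is proper, and $\mathcal J_1 \subseteq \mathcal J_2$ iff $(\mathcal J_1)_\C \subseteq (\mathcal J_2)_\C$.

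The second step is to argue that if there were a largest real proper operator ideal $\mathcal J_{\max}$, then $(\mathcal J_{\max})_\C$ — or rather the largest self-conjugate complex ideal contained in the class it generates — would have to contain every self-conjugate complex proper ideal, in particular both of the self-conjugate proper ideals $\Ideal_1, \Ideal_2$ on $X$ from the proof of Theorem \ref{2}; but $\Ideal_1$ and $\Ideal_2$ together contain two operators whose sum is $Id_{X_S(\C)}$ (up to the technical variation), contradicting properness of $(\mathcal J_{\max})_\C$, hence of $\mathcal J_{\max}$. One must check that the relevant complex ideals really are self-conjugate — this is where the specific combinatorial construction of $X_S$ matters, and one uses that $X_S(\C)$-factorability and $X$-factorability are conjugation-invariant because the spaces involved are complexifications of real spaces and the shift structure is defined over $\R$. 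The main obstacle, and the step requiring the most care, is precisely this verification that the $\Opp$-type ideals built from powers of $X$ are self-conjugate and that the witnessing operators descend to genuine $\R$-linear operators on $X_S(\R)^n$: one has to make sure the ``technical variation'' of the spaces used to get all powers projectively incomparable can be carried out equivariantly with respect to the conjugation, so that the whole package — the proper ideal, its two sub-ideals, and the bad pair of operators — lives in the image of the real-to-complex correspondence. Once that is in place, Theorem \ref{3} follows formally from Theorem \ref{2} via the order-preserving, properness-preserving correspondence of the first step.
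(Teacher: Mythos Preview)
Your outline is correct but takes a more roundabout path than the paper. The paper does \emph{not} transfer the complex result through the self-conjugacy correspondence. Instead it proves the key technical input, Proposition~\ref{mainprop}, \emph{uniformly} for the real and complex shift spaces: the real case is obtained by observing that if a complemented subspace $Z$ of $X_S(\R)^m$ were isomorphic to a subspace of $Y^n$, then $Z_\C$ would be complemented in $X^m$ and isomorphic to a subspace of $Y_\C^n$, contradicting Proposition~\ref{important} applied to $X=(X_S(\R))_\C$. Once Proposition~\ref{mainprop} is available over $\R$, the paper works \emph{entirely in the real category}: the real proper ideals $\ideal_{\pm 1}=\Opp(X_S(\R))\cap\Opp(Y_{\pm 1})$ are built directly, the real operators $T_{\pm 1}=Id\mp S+K_{\pm 1}$ lie in them, and their sum is Fredholm on $X_S(\R)$. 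No self-conjugacy check is needed at all.

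Your route---complexify a hypothetical largest real proper ideal $\mathcal J_{\max}$, note that $(\mathcal J_{\max})_\C$ is proper and contains every self-conjugate complex proper ideal, and then feed in $\Ideal_{\pm 1}$---also works, but it forces you to verify that the complex $\Ideal_{\pm 1}$ are self-conjugate. This is true precisely because $X=(X_S(\R))_\C$ and $Y_{\pm 1}$ can be taken as complexifications (choose $K_{\pm 1}$ real and complexify), so $\overline{X}\simeq X$ and $\overline{Y_{\pm 1}}\simeq Y_{\pm 1}$; your proposal identifies this as the crux but does not carry it out, and at one point confuses $X_S(\C)$ (which is \emph{not} a complexification and for which self-conjugacy of $\Opp$ is unclear) with $X=(X_S(\R))_\C$. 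The paper's direct approach sidesteps this entirely: complexification appears only inside the proof of Proposition~\ref{mainprop}, and the ideal-theoretic argument never leaves the real world. Your hedging about ``the largest self-conjugate complex ideal contained in the class it generates'' is also unnecessary: $(\mathcal J_{\max})_\C$ is automatically self-conjugate by the proposition following the definition of self-conjugacy.
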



\subsection{Background and definitions}

\

In what follows $I_X$, or sometimes $Id_X$, denotes the identity map on $X$. We use the notation $X \simeq Y$ to mean
that the spaces $X$ and $Y$ are linearly isomorphic.

We recall a few basic results about certain operator ideals and Fredholm theory. For more details we
refer to \cite{LT} or to the survey of Maurey \cite{maurey}.

An operator $S \in L(X,Y)$ is
strictly singular, $S \in SS(X,Y)$, when $S_{|Z}$ is never an isomorphism into  for $Z$ an (infinite dimensional) subspace of $X$;
it is strictly cosingular, $S \in CS(X,Y)$,
when $QS$ is never surjective for $Q$  the quotient map onto some infinite codimensional subspace of $Y$).

An operator $T: X \rightarrow Y$ is Fredholm if it has closed image and finite dimensional kernel and cokernel.
It is finitely singular if it restricts to an isomorphism into on some finite codimensional subspace -
this terminology appears in \cite{GM2}; such operators are more classically called upper semi-Fredholm, as in \cite{AG}.
It is infinitely singular otherwise, which is equivalent to saying that for any $\epsilon>0$ there exists an
infinite dimensional subspace $Z$ of $X$ such that $\|T_{|Z}\|$ is at most $\epsilon$.

Recall that $K$ denotes the closed ideal of compact operators. We have the following classical inclusions:
$$K(X,Y) \subset SS(X,Y) \subset In(X,Y) \subset Imp(X,Y)$$
and the chain of inclusions obtained by replacing $SS(X,Y)$ by $CS(X,Y)$.

The ideal of inessential operators is closely related to Fredholm theory; in particular
an inessential perturbation of a Fredholm operator is Fredholm (and so this holds as well for
compact or strictly singular perturbations).

A Banach space is decomposable if it is the (topological) direct sum of two infinite dimensional closed subspaces, indecomposable
otherwise, and hereditarily indecomposable (HI) if it contains no decomposable subspace.
The first example of an HI space was due to Gowers-Maurey \cite{GM} and since then a great number
of other indecomposable or HI examples with various additional properties have been obtained (some of which may be found in 
\cite{maureyhandbook}).

\section{Complex ideals versus real ideals}

In this section we recall and develop tools to compare $\R$-linear and $\C$-linear behaviours of operators,
with Theorem \ref{1} as our objective.

\subsection{Complex structures}

\

The theory of complex structures on Banach spaces was born after the example by Bourgain (1986)
of two spaces which are linearly isometric
as real spaces but not isomorphic as complex spaces \cite{Bourgain}. Actually the two spaces used by
Bourgain are conjugate and so the real linear isometry is just the identity map between them.

A complex structure on a real space $X$ is the space $X$ equipped with a $\C$-linear structure whose underlying real structure coincides
with the original one. Allowing renormings, this is in correspondence with real operators $J$ on $X$ of square equal to $-I_X$, which define
the multiplication $x \mapsto i.x$.
The {\em number of complex structures} on a space is understood up to ($\C$-linear) isomorphism and has been studied in several papers.
For example a real space is said to have {\em unique complex structure} if it admits  complex structures and all of them are mutually isomorphic.
Examples of spaces with unique complex structure are: a) the Hilbert space (folklore or the next list of examples),
b) the spaces $\ell_p, L_p(0,1), c_0, C([0,1])$ and more generally real spaces 
admitting a complex structure and whose
complexification is primary (Kalton, Theorem 28 in \cite{FG}),  c) some hereditarily indecomposable example \cite{F}, d) some non-classical example with a subsymmetric basis
\cite{C}, and e) others.
Examples of spaces without complex structure are James space \cite{dieudonne}, a uniformly convex space of Szarek \cite{szarek},
the original Gowers-Maurey space \cite{GM}, as well as many other spaces
with small spaces of operators. ``Extremely non-complex" real spaces are also considered in \cite{koszmider}.

In \cite{F} are also provided spaces with exactly $n$ complex structures, whenever $n \geq 2$.
This also gives examples of spaces with a complex structure which is not unique but still is isomorphic to its conjugate. 
An example with exactly $\aleph_0$ complex structures is due to Cuellar \cite{cuellar}, and
one with $2^{\aleph_0}$ and additional properties is due to Anisca \cite{anisca} (it is not hard to check that the original example of
Bourgain also admits $2^{\aleph_0}$ such structures).
See also \cite{AFM} for considerations on the number of complex structures in the setting of complexity of equivalence
relations on Polish spaces.

In \cite{K} Kalton uses a variation of Kalton-Peck space $Z_2$ from \cite{KP}, called $Z_2(\alpha)$ ($\alpha$ a non-zero real parameter), 
to obtain a much simpler example of complex space  not isomorphic to its conjugate $\overline{Z_2(\alpha)}$ (which
here identifies with $Z_2(-\alpha)$). According 
to the proof of \cite{K} Theorem 2, see \cite{mestradocuellar}, it actually holds that $Z_2(\alpha)$ does not even embed
into $\overline{Z_2(\alpha)}$. Regarding $Z_2$ 
it seems to be an interesting open question whether it admits a unique complex structure.
Finally the most extreme example seems to appear in  \cite{F}, with a space admitting
exactly two complex structures, which are conjugate (and therefore $\R$-linearly isometric) but  totally incomparable as complex spaces (meaning
that no $\C$-linear subspace of one is $\C$-isomorphic to a $\C$-linear subspace of the other).

These examples show that there can be quite a variety of complex structures on a given real space, and therefore
it is a natural and non trival question not only to relate properties of operators seen as real or seen as complex,
but also seen as $\C$-linear with respect to different complex structures on the same real space.

\

We refer to Pietsch \cite{P} for background on operator ideals. In this paper we
shall use the word {\em class} to define a family of normed spaces which is stable under isomorphisms.
A {\em class} of operators which does not necessarily define an ideal is also defined in the sense of Pietsch,
i.e. with varying domain and codomain.

The concept of complexification of real spaces, and of real operators on them,  is well-known, and recalled below. 
It is for example extremely useful in order to use
spectral theory  in the context of real spaces. There is a less well-known and almost trivial process, which we shall call here {\em realification},
and which
is simply the one obtained by ``forgetting" the multiplication by $i$ on a space and ``only remembering" the $\R$-linear structure.

We list the definitions of complexification and realification in various situations below.
Before that, let us fix an important notation. Since we shall always go back and forth between real and complex
ideals or classes, to avoid confusion and when relevant we shall reserve lower case letters ($u$, $ss$, $cs$, $in$, ...) for classes of
{\em real} operators and upper case letters ($U$, $SS$, $CS$, $IN$, ....) for classes of {\em complex} operators.
The same will hold for classes of spaces ($a$,... for classes of real spaces, $A$,... for classes of complex spaces).

\subsection{Normed spaces}
The complexification $X_\C$ of a real space $X$ is the space $X \oplus X$ equipped with the complex structure
associated to $J(x,y)=(-y,x)$. Elements of $X_\C$ are often noted $x+iy, x,y \in X$. Regarding the realification:

\begin{defin} Let $X$ be a complex space. The realification $X_\R$ of $X$ is the space $X$
equipped with the real structure underlying its complex structure.
\end{defin}

As is usual we denote by $\overline{X}$ the conjugate of the complex space $X$, i.e. the space $X$
equiped with the law $\lambda . x:=\overline{\lambda}x$.
 It is clear that the realifications of $X$ and $\overline{X}$ coincide.
Note also that if $T$ is $\C$-linear from $X$ to $Y$, then it also acts as a $\C$-linear operator,
denoted $\overline{T}$, from $\overline{X}$ to $\overline{Y}$.

\begin{remark}\label{rem} The following hold:
\begin{itemize}
\item[(a)] if $X$ is a real space then $(X_\C)_\R=X \oplus X$.
\item[(b)] if $X$ is a complex space then
$(X_\R)_\C \simeq X \oplus \overline{X}$.
\end{itemize}
\end{remark}

\begin{proof} The first is obvious. The second follows from an observation by N.J. Kalton which appears
in a first form in \cite{FG} Lemma 27
and then more clearly in a paper of W. Cuellar Carrera \cite{C} Lemma 2.1. Namely for any real space $X$ and any complex structure $J$ on $X$,
denoting by $X_J$ the associated complex case, we have
$$X_\C \simeq X_J \oplus X_{-J}$$ as complex spaces.
The copy of $X_J$ in $X_\C$ is the subspace $\{(x,Jx); x \in X\}$, or more explicitely the isomorphism from
$X_J \oplus X_{-J}$ to $X_\C$ is given by
$$(x,y) \mapsto (x,Jx)+(y,-Jy)=(x+y, J(x-y)).$$
\end{proof}

\subsection{Classes of spaces}
It is then natural to define complexification and realification of classes of spaces, where we recall that the classes
are understood to be invariant by isomorphism.
\begin{defin}

\

If $a$ is a class of real spaces, we define the class $a_\C$ of complex spaces by
$$X \in a_\C \Leftrightarrow X_\R \in a.$$

If $A$ is a class of complex spaces, we define the class $A_\R$ of real spaces by
$$X \in A_\R \Leftrightarrow X_\C \in A.$$
\end{defin}

\begin{remark} The following hold:
\begin{itemize}
\item[(a)]
If $X$ is a real space and $a$ a class of real spaces, then
$X \in (a_\C)_\R$ iff $X^2 \in a$.
\item[(b)]
If $X$ is a complex case and $A$ a class of complex spaces, then $X \in (A_\R)_\C$ iff $X \oplus \overline{X} \in A$.
\end{itemize}
\end{remark}

\subsection{Linear operators}
Similar concepts are defined for bounded linear operators.

\begin{defin} 

\

If $T$ is real from $X$ to $Y$ then its {\em complexification} $T_\C$ from $X_\C$ to $Y_\C$ is well-known, and defined as
$$T_\C(x+iy)=Tx+iTy.$$ Conversely for
$T$ $\C$-linear between complex spaces $X$ and $Y$, its {\em realification} $T_\R$ will be $T$ seen as $\R$-linear between $X_\R$ and $Y_\R$.
\end{defin}
\

Note  that
$T \mapsto T_\C$ is an algebra homomorphism from ${\mathcal L}(X,Y)$ to
${\mathcal L}(X_\C, Y_\C)$, and  that
$T \mapsto T_\R$ is an algebra homomorphism from ${\mathcal L}(X,Y)$ to
${\mathcal L}(X_\R, Y_\R)$.
As a consequence:

\begin{remark} 
The following hold:
\begin{itemize}
\item[(a)] if $T$ is  $\R$-linear then the realification of the complexification of $T$ is 
$\begin{pmatrix} T & 0 \\ 0 & T \end{pmatrix}$ acting from $X^2$ to $Y^2$
\item[(b)] if $T$ is $\C$-linear then the complexification of the realification of  $T$ may be seen as
$\begin{pmatrix} T & 0 \\ 0 & T \end{pmatrix}$ acting from $X \oplus \overline{X}$ to $Y \oplus \overline{Y}$
\end{itemize}
\end{remark}

\begin{proof} We just note that $(T_\R)_\C$ defined on $(X_\R)_\C$ acts on the copy $\{(x,ix), x \in X\}$ of $X$ as $$(x,ix) \mapsto (Tx,Tix)=(Tx,iTx).$$
This means that it acts as $T$ on $X$ modulo the identification of $X$ as a subspace of $(X_\R)_\C$ through $x \mapsto (x,ix)$.
Likewise it acts as $T$ on $\overline{X}$ modulo the identification of $\overline{X}$ as a subspace of $(X_\R)_\C$ through $x \mapsto (x,-ix)$.
\end{proof}

\subsection{Classes of operators and/or ideals}

Finally we define complexification and realification for classes of operators. We shall see 
that these definitions behave well with ideals in the sense of Pietsch.

\begin{defin} 

\

\begin{itemize}
\item[(a)]
Let $\ideal$ be a class of real operators. We define the complexification $\ideal_\C$ of $\ideal$ by
$$T \in \ideal_\C \Leftrightarrow T_\R \in \ideal$$
\item[(b)]
Let $\Ideal$ be a class of complex operators. We define the realification $\Ideal_\R$ of $\Ideal$ by
$$T \in \Ideal_\R \Leftrightarrow T_\C \in \Ideal$$
\end{itemize}
\end{defin}

\begin{lemma}
If $\ideal$ is a real (closed) ideal of operators then $\ideal_\C$ is a complex (closed) ideal. 
If $\Ideal$ is a complex (closed) ideal of operators then $\Ideal_\R$ is a real (closed) ideal.
\end{lemma}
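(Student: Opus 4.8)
The plan is to verify directly the three defining axioms of an operator ideal in the sense of Pietsch for $\ideal_\C$, and symmetrically for $\Ideal_\R$, using only the two facts already recorded above: that $T\mapsto T_\R$ is an algebra homomorphism from ${\mathcal L}(X,Y)$ into ${\mathcal L}(X_\R,Y_\R)$ (and $T\mapsto T_\C$ one from ${\mathcal L}(X,Y)$ into ${\mathcal L}(X_\C,Y_\C)$), together with the elementary observation that realification and complexification send finite rank operators to finite rank operators.

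For the finite rank (or $I_\K$) axiom I would note that a rank-one complex operator $a\otimes y$ has as its realification an $\R$-linear operator of real rank two, hence of finite rank, hence in $\ideal$ since $\ideal$ is an operator ideal; thus $a\otimes y\in\ideal_\C$, and in particular $\ideal_\C$ contains the identity of the complex scalar field. The mirror argument, using that the complexification of a finite rank real operator has finite complex rank, gives $I_\R\in\Ideal_\R$. Additivity of $\ideal_\C$ and of $\Ideal_\R$ is immediate from the additivity of $T\mapsto T_\R$ and $T\mapsto T_\C$: if $S_\R,T_\R\in\ideal$ then $(S+T)_\R=S_\R+T_\R\in\ideal$, and likewise on the complex side. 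For the two-sided ideal property, if $T\in\ideal_\C(X,Y)$ and $A\colon X_0\to X$, $B\colon Y\to Y_0$ are $\C$-linear, then by multiplicativity $(BTA)_\R=B_\R\,T_\R\,A_\R$; since $T_\R\in\ideal(X_\R,Y_\R)$ and $\ideal$ is an ideal, this composition lies in $\ideal((X_0)_\R,(Y_0)_\R)$, that is $BTA\in\ideal_\C$, and the same computation with $T\mapsto T_\C$ settles $\Ideal_\R$.

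For the closed case the key remark is that $T\mapsto T_\R$ is an isometry of ${\mathcal L}(X,Y)$ onto a subspace of ${\mathcal L}(X_\R,Y_\R)$, the underlying norms being unchanged, while $T\mapsto T_\C$ is an isomorphic embedding (isometric for the natural cross-norm on the complexification); in particular both maps are continuous, so the realification, resp.\ complexification, of a norm limit of operators from $\ideal_\C$, resp.\ $\Ideal_\R$, is a norm limit of operators from $\ideal$, resp.\ $\Ideal$, and therefore lies in $\ideal$, resp.\ $\Ideal$, by closedness; hence $\ideal_\C(X,Y)$, resp.\ $\Ideal_\R(X,Y)$, is closed.

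I do not expect a genuine obstacle here: everything reduces to the homomorphism property and the finite rank observation, both already in place. The only mild points of care are the precise form of Pietsch's finite rank (or $I_\K$) axiom, and fixing once and for all a norm on the complexification so that $T\mapsto T_\C$ is visibly continuous; neither is substantial.
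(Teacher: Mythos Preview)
Your verification is correct and follows the same underlying mechanism as the paper: both rely on the fact that $T\mapsto T_\R$ and $T\mapsto T_\C$ are algebra homomorphisms, so the Pietsch axioms transfer. The paper's argument is far terser --- it only singles out the one point it regards as not entirely automatic, namely that $\ideal_\C$ is stable under multiplication by $i$ (because $(iT)_\R=i\,T_\R$ with $i$ viewed as an $\R$-linear map, hence lies in $\ideal$ by the real ideal property) --- whereas you absorb this into the two-sided ideal axiom by composing with $iI$; your treatment of the finite rank axiom and of closedness is simply more explicit than what the paper chooses to write down.
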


 For $\ideal_\C$ note that this relies on the fact that if $T \in \ideal_\C$ then $iT \in \ideal_\C$, 
 because $(iT)_\R=iT_\R \in \ideal$ since $i$ is an $\R$-linear operator and $\ideal$ is a real ideal.



The following natural notion will prove extremely important.
\subsection{Conjugate classes and/or ideals}

\begin{defin}
For $\Ideal$ a complex class of operators  let us denote
by $\overline{\Ideal}$ the conjugate class, i.e.
$$T \in \overline\Ideal \Leftrightarrow \overline{T} \in \Ideal.$$
\end{defin}

\begin{defin} A complex class $\Ideal$ of operators is self-conjugate if $\overline\Ideal=\Ideal$.\end{defin}

The class $\overline{\Ideal}$ is not to be mistaken with the closure of $\Ideal$, which is denoted
$\Ideal^{\rm clos}$ . The proof of the next proposition is left as an exercise.

\begin{prop}
The ideals of compact, strictly singular, strictly cosingular, inessential operators,
and the class of improjective operators are self-conjugate. 
\end{prop}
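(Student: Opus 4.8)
The plan is to verify the self-conjugacy claim separately for each of the five classes, in each case exploiting the basic fact recalled in the excerpt: conjugation is an ``identity-like'' operation on the underlying sets, i.e. $\overline{X}$ and $X$ have the same realification, and $\overline{T}$ acts on the underlying set exactly as $T$ does, only the scalar multiplication having been twisted. Concretely, I would first record the elementary observations that a subset $Z\subseteq X$ is a $\C$-linear subspace of $X$ if and only if it is a $\C$-linear subspace of $\overline{X}$ (the additive and $\R$-linear structures being unchanged, and stability under multiplication by $i$ being the same as stability under multiplication by $-i$), and likewise that a subspace of $X$ is complemented in $X$ iff it is complemented in $\overline{X}$, and that $\overline{T}$ restricted to $Z$ is an isomorphism onto its image iff $T$ restricted to $Z$ is. These give, almost tautologically, that $T$ is strictly singular iff $\overline{T}$ is, and that $T$ is improjective iff $\overline{T}$ is.

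Next I would treat compactness and strict cosingularity. Compactness is immediate since the unit ball and the norm topology of $X$ and $\overline{X}$ coincide, so $T(B_X)$ is relatively compact iff $\overline{T}(B_{\overline X})$ is. For strict cosingularity I would note that the quotient spaces of $X$ are the same sets as the quotient spaces of $\overline{X}$ (a closed subspace $M$ being a $\C$-subspace of $X$ iff of $\overline X$, as above), with the same codimension, and that surjectivity of $QT$ is a purely set-theoretic/$\R$-linear statement; hence $T\in CS$ iff $\overline T\in CS$.

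For inessentiality I would argue via Fredholmness: an operator $S\in L(X)$ is Fredholm iff it has closed range and finite-dimensional kernel and cokernel, all of which are preserved under passing to the conjugate (closedness of range and the relevant dimensions are unaffected by twisting scalar multiplication — and a complex space is finite-dimensional iff its conjugate is). Since $\overline{\mathrm{Id}_X}=\mathrm{Id}_{\overline X}$ and $\overline{UT}=\overline U\,\overline T$, we get that $\mathrm{Id}_X-UT$ is Fredholm for all $U\in L(Y,X)$ iff $\mathrm{Id}_{\overline X}-\overline U\,\overline T$ is Fredholm for all such $U$; and $U\mapsto \overline U$ is a bijection of $L(Y,X)$ onto $L(\overline Y,\overline X)$, so the quantifier over all $U$ matches up. Thus $T\in\mathrm{In}$ iff $\overline T\in\mathrm{In}$.

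There is no real obstacle here — this is genuinely an exercise, as the text says — but the one point requiring a modicum of care is making explicit the dictionary ``$\C$-subspace of $X$ $=$ $\C$-subspace of $\overline X$'' and its consequences for complementation, quotients, and dimension, since every one of the five verifications reduces to that dictionary plus the observation that the defining conditions (being an isomorphism, being surjective, being compact, being Fredholm) are insensitive to replacing $i$ by $-i$. Once that dictionary is stated once, each of the five cases is a single line.
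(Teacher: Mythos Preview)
Your proposal is correct and is exactly the natural verification one expects here; the paper itself does not give a proof but leaves this proposition as an exercise, and your write-up is precisely the intended exercise carried out. The only remark worth making is that your ``dictionary'' observation---that the $\C$-linear subspaces, complemented subspaces, quotients, and finite-dimensionality notions for $X$ and $\overline{X}$ coincide because the realifications $X_\R$ and $\overline{X}_\R$ are literally the same space---is the whole content of the exercise, and you have identified and used it cleanly in each of the five cases.
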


\begin{prop}
If $\ideal$ is a real class of operators, then $\ideal_\C$ is self-conjugate.
\end{prop}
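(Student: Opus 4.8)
The plan is to unwind the definitions. Fix a real class of operators $\ideal$; we must show $\overline{\ideal_\C} = \ideal_\C$, i.e. that for every $\C$-linear operator $T$ between complex spaces, $T \in \ideal_\C$ if and only if $\overline{T} \in \ideal_\C$. By the definition of complexification of a class, $T \in \ideal_\C$ means $T_\R \in \ideal$, and $\overline{T} \in \ideal_\C$ means $(\overline{T})_\R \in \ideal$. So the whole statement reduces to the observation that the realification of $T$ and the realification of $\overline{T}$ are \emph{the same} $\R$-linear operator: realification simply forgets the multiplication by $i$, and $\overline{T}$ is by construction $T$ acting on the same underlying real space with only the scalar law changed. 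More precisely, $\overline{X}$ and $X$ have the same realification (this is noted right after the definition of the conjugate space in the excerpt), and under that identification $\overline{T} \colon \overline{X} \to \overline{Y}$ is, as a map of sets, literally $T$; hence $(\overline{T})_\R = T_\R$ as elements of $L(X_\R, Y_\R)$.

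Concretely I would write: let $T \in L(X,Y)$ be $\C$-linear. Since the realifications of $X$ and $\overline{X}$ coincide, and likewise for $Y$ and $\overline{Y}$, the operator $\overline{T} \in L(\overline{X},\overline{Y})$, when realified, acts on exactly the space $X_\R$ and sends each vector to the same image as $T$ does; therefore $(\overline{T})_\R = T_\R$. Consequently
$$T \in \ideal_\C \iff T_\R \in \ideal \iff (\overline{T})_\R \in \ideal \iff \overline{T} \in \ideal_\C,$$
which is precisely the assertion that $\ideal_\C$ is self-conjugate. Note that this argument uses nothing about $\ideal$ beyond its being a class of real operators (not even that it is an ideal), which matches the generality of the statement.

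I do not expect any genuine obstacle here: the content is entirely the bookkeeping identity $(\overline{T})_\R = T_\R$. The only point worth being careful about is making the identification of $X_\R$ with $(\overline{X})_\R$ explicit enough that the reader sees $\overline{T}$ and $T$ really are the same set map — so I would spell that out in one sentence rather than leaving it implicit. Everything else is a one-line chain of equivalences from the definitions.
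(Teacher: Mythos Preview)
Your proof is correct and follows exactly the same approach as the paper: both reduce the claim to the single observation that $(\overline{T})_\R = T_\R$ for any $\C$-linear $T$, and then conclude by the definition of $\ideal_\C$. The paper states this in one line, while you spell out the chain of equivalences more explicitly, but the content is identical.
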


\begin{proof} For a complex operator $T$ the real operators $T_\R$ and $\overline{T}_\R$ coincide.
\end{proof}

\

To develop examples of ideals which are not self-conjugate, we consider
$\Op(X)$, the class of $X$-factorable operators, i.e. operators which factor through the Banach space $X$. 

\begin{defin}\label{defop} If $X$ is a Banach space, then $\Op(X)$ denotes the class of $X$-factorable operators, i.e.
 for $T \in L(Y,Z)$,
 $T \in \Op(X)$ iff $T=UV$ for some $V \in L(Y,X)$ and $U \in L(X,Z)$.
\end{defin}

Let us note the useful observation
that when $X$ is a complex space,  $\overline{\Op(X)}=\Op({\overline{X}})$.
We recall the well-known fact:

\begin{prop} If $X$ is a Banach space which contains a complemented subspace isomorphic to $X^2$, then $\Op(X)$ is an operator ideal. 
\end{prop}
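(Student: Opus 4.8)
The plan is to verify the three axioms of an operator ideal for $\Op(X)$ directly, using the hypothesis that $X \simeq X^2 \oplus W$ for some Banach space $W$ (equivalently, that $X^2$ is isomorphic to a complemented subspace of $X$) to handle the only non-trivial case, namely stability under sums. First I would record the two easy axioms. Stability of $\Op(X)$ under two-sided composition is immediate: if $T = UV$ factors through $X$ and $A, B$ are arbitrary operators making $ATB$ meaningful, then $ATB = (AU)(VB)$ also factors through $X$. Likewise, $\Op(X)$ contains every rank-one (hence every finite-rank) operator, since if $T = y^* \otimes z \in L(Y,Z)$ one factors $T = UV$ with $V \colon Y \to X$ any map of the form $y \mapsto y^*(y) x_0$ for a fixed nonzero $x_0 \in X$ and $U \colon X \to Z$ a suitable rank-one map sending $x_0$ to $z$; more simply, $X$ being infinite-dimensional and nonzero suffices. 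So the content of the proposition is entirely in the additivity.

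For additivity, suppose $T_1, T_2 \in \Op(X)(Y,Z)$, say $T_k = U_k V_k$ with $V_k \colon Y \to X$ and $U_k \colon X \to Z$ for $k = 1, 2$. The standard trick is to write
\[
T_1 + T_2 = \begin{pmatrix} U_1 & U_2 \end{pmatrix} \begin{pmatrix} V_1 \\ V_2 \end{pmatrix},
\]
where $\begin{pmatrix} V_1 \\ V_2 \end{pmatrix} \colon Y \to X \oplus X$ and $\begin{pmatrix} U_1 & U_2 \end{pmatrix} \colon X \oplus X \to Z$. This exhibits $T_1 + T_2$ as an operator factoring through $X \oplus X \simeq X^2$. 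Now invoke the hypothesis: since $X$ contains a complemented subspace isomorphic to $X^2$, there is an isomorphism $X^2 \simeq X_0 \subseteq X$ with $X_0$ complemented, so one has maps $j \colon X^2 \to X$ (an isomorphism onto $X_0$) and a projection-type map $p \colon X \to X^2$ with $p j = \mathrm{Id}_{X^2}$. Then
\[
T_1 + T_2 = \Bigl( \begin{pmatrix} U_1 & U_2 \end{pmatrix} p \Bigr) \Bigl( j \begin{pmatrix} V_1 \\ V_2 \end{pmatrix} \Bigr),
\]
and the first factor maps $X \to Z$ while the second maps $Y \to X$, so $T_1 + T_2 \in \Op(X)$. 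This completes the verification that $\Op(X)$ is an operator ideal in the sense of Pietsch.

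The only subtlety — and the one place care is needed — is the passage from "$X^2$ is a complemented subspace of $X$" to the existence of the pair $(j, p)$ with $pj = \mathrm{Id}_{X^2}$: one must note that a complemented subspace $X_0$ of $X$ isomorphic to $X^2$ comes with a bounded projection $P \colon X \to X$ with range $X_0$, and composing $P$ with the isomorphism $X_0 \simeq X^2$ gives the required $p$, while $j$ is just the composite of $X^2 \simeq X_0$ with the inclusion $X_0 \hookrightarrow X$. I expect no real obstacle here; the proposition is genuinely elementary, and the main point worth stating explicitly is simply that the "square trick" for additivity is exactly what forces one to pass from $X$ to $X^2$, which is why the complementation hypothesis on $X^2$ is the natural one. (It is worth remarking, as motivation for the later use of $\Opp$, that without such a hypothesis the sum of two $X$-factorable operators need only be $X^2$-factorable, which is precisely the phenomenon exploited elsewhere in the paper.)
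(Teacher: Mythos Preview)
Your proof is correct and is the standard argument for this well-known fact. The paper itself does not prove the proposition at all; it simply states it as ``the well-known fact'' and moves on, so there is no proof in the paper to compare against. Your verification of the three Pietsch axioms, with the ``square trick'' for additivity using the complementation of $X^2$ in $X$, is exactly the expected elementary argument.
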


Note that $\Op(X)$ has no reason to be closed in general.

\begin{prop}\label{notself} Let $X$ be a complex space which is not isomorphic to a complemented subspace of $\overline{X}$.
Then $\Op(X)^{\rm clos}$ is not self conjugate. In particular $\Op(X)$ is not self-conjugate.
\end{prop}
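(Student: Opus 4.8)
The plan is to argue by contradiction, using the fact (noted just before the statement) that $\overline{\Op(X)}=\Op(\overline{X})$, together with the observation that $\Op(\cdot)^{\mathrm{clos}}$ is monotone under passing to complemented subspaces. First I would suppose that $\Op(X)^{\mathrm{clos}}$ is self-conjugate, i.e. $\Op(X)^{\mathrm{clos}}=\overline{\Op(X)^{\mathrm{clos}}}$. Since conjugation commutes with closure ($\overline{\Ideal^{\mathrm{clos}}}=(\overline{\Ideal})^{\mathrm{clos}}$, because $T\mapsto\overline T$ is an isometric algebra isomorphism onto its range), this gives $\Op(X)^{\mathrm{clos}}=\Op(\overline X)^{\mathrm{clos}}$. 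In particular $I_X$, which trivially lies in $\Op(X)$ and hence in $\Op(X)^{\mathrm{clos}}$, must lie in $\Op(\overline X)^{\mathrm{clos}}$.

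The crux is then to show that $I_X\in\Op(\overline X)^{\mathrm{clos}}$ forces $X$ to be isomorphic to a complemented subspace of $\overline X$, contradicting the hypothesis. The clean way to see this: if $I_X$ is a limit in operator norm of operators factoring through $\overline X$, then for some such factorization $I_X=UV$ with $V\in L(X,\overline X)$, $U\in L(\overline X,X)$ we have $\|I_X-UV\|<1$, so $UV$ is invertible on $X$; replacing $U$ by $(UV)^{-1}U$ we may assume $UV=I_X$. Then $VU$ is a bounded projection on $\overline X$ whose range $V(X)$ is isomorphic to $X$ via $V$ (with inverse $U$ restricted to that range). Hence $X$ embeds as a complemented subspace of $\overline X$, which is the desired contradiction. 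This also yields the ``in particular'' clause, since $\Op(X)\subseteq\Op(X)^{\mathrm{clos}}$, so if $\Op(X)$ were self-conjugate then so would be its closure.

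The main obstacle, and the only point requiring a little care, is the perturbation step: I need that membership of $I_X$ in the closed factorable class $\Op(\overline X)^{\mathrm{clos}}$ already produces a genuine (non-approximate) factorization of $I_X$ through $\overline X$. This is exactly the standard fact that an identity operator belongs to the closure of $\Op(Z)$ only if it belongs to $\Op(Z)$ itself — one uses a Neumann-series argument on $UV$ as above. I would state this as a short lemma or inline remark, noting that it is the analogue, for the factorable class, of the Pietsch observation (cited in the introduction) that the space ideal of an operator ideal coincides with that of its closure. Everything else is bookkeeping: that conjugation is compatible with closure, and that $\overline{\Op(X)}=\Op(\overline X)$, which was already recorded before the statement.
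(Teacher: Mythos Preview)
Your argument is correct and is essentially the paper's own proof: both show that $I_X\in\Op(X)$ but $I_X\notin\Op(\overline X)^{\rm clos}$ via the same Neumann-series perturbation (if $\|I_X-AB\|<1$ with $B:X\to\overline X$, $A:\overline X\to X$, then $AB$ is invertible, $B$ embeds $X$ into $\overline X$, and $B(AB)^{-1}A$ projects onto the image). The only differences are cosmetic---you wrap the argument in a contradiction and make explicit that conjugation commutes with closure, whereas the paper proves directly that $I_X\notin\overline{\Op(X)}^{\rm clos}$.
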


\begin{proof} 
We shall prove that $I_X$ does not belong to $\overline{\Op}(X)^{\rm clos}=\Op(\overline{X})^{\rm clos}$.

Indeed assume there exists $A,B$ such that $T:=I_{X}-AB$ has norm $\|T\|<\epsilon$ where $B: X \rightarrow \overline{X}$ and $A: \overline{X} \rightarrow X$.
Then for $\epsilon$ small enough
$AB=I-T$ would be an isomorphism on $X$  and
therefore $B$ would be an isomorphic embedding of $X$ into $\overline{X}$. 
Finally the image $BX$ would be complemented 
in $\overline{X}$ by $B(I-T)^{-1}A$. This is a contradiction.
\end{proof}

Of course spaces not isomorphic to a complemented subspace of their conjugate 
and at the same time isomorphic to their squares (so that $\Op(X)$ is an ideal) must be rather exotic.
We present two examples of such spaces and therefore of ideals which are not self-conjugate.

\begin{example} If $F$ is the complex HI space totally incomparable with its conjugate from \cite{F},
then the ideal $\Op(\ell_2(F))^{\rm clos}$ is not self conjugate.\end{example}

\begin{proof} The space $F$ is complemented in $\ell_2(F)$ but does not embed in $\overline{\ell_2(F)}=\ell_2(\overline{F})$.
Indeed, see for example \cite{Burlando}, a space which embeds into $\ell_2(\overline{F})$ either contains a copy of $\ell_2$ (which cannot hold
in the case of the HI space $F$) or embeds into $\overline{F}^n$ for some $n$, which contradicts the total incomparability
of $F$ with $\overline{F}$.  \end{proof}

A less exotic example, even ``elementary" in the words of Kalton, is provided by him in \cite{K}.

\begin{example}
If $Z_2(\alpha)$ is the version of Kalton-Peck complex space  defined by Kalton \cite{K}, 
then  $\Op(Z_2(\alpha))^{\rm clos}$ is an ideal which is not self conjugate, for $\alpha \neq 0$. \end{example}

\begin{proof} The space $Z_2(\alpha)$  does not embed into its conjugate, if $\alpha \neq 0$, see \cite{K} Proof of Theorem 2
and \cite{mestradocuellar}. On the other hand, it admits a canonical $2$-dimensional ``symmetric decomposition" in the same way as
$Z_2$ does and in particular
is isomorphic to its square.
\end{proof}

\section{Applications to real and complex versions of ideals}

\subsection{Real and complex versions of classical ideals}

We use the analysis of the previous section to relate a certain correspondence between
real and complex versions of ideals to the self-conjugacy property.

\begin{prop}\label{11}

\

\begin{enumerate}
\item
Let $\ideal$ be a real ideal. Then
$(\ideal_\C)_\R=\ideal.$
\item
Let $\Ideal$ be a complex ideal. Then
$(\Ideal_\R)_\C = \Ideal \cap \overline{\Ideal}.$
\item
A complex ideal $\Ideal$  is self-conjugate if and only if
$(\Ideal_\R)_\C = \Ideal.$
\end{enumerate}
\end{prop}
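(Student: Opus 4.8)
\textbf{Proof proposal for Proposition \ref{11}.}

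The plan is to verify the three statements in order, with parts (1) and (2) being the real content and part (3) an immediate formal consequence. The key observation throughout is Remark \ref{rem} applied at the level of operators: for a real operator $T$, the realification of its complexification is $\begin{pmatrix} T & 0 \\ 0 & T \end{pmatrix}$ acting from $X^2$ to $Y^2$ (i.e. $(T_\C)_\R \simeq T \oplus T$), and for a complex operator $T$, the complexification of its realification is $\begin{pmatrix} T & 0 \\ 0 & T \end{pmatrix}$ acting from $X \oplus \overline{X}$ to $Y \oplus \overline{Y}$ (i.e. $(T_\R)_\C \simeq T \oplus \overline{T}$). The proof then just consists of unwinding the defining equivalences of $\cdot_\C$ and $\cdot_\R$ for classes of operators and using that an operator ideal is stable under finite direct sums and under passing to complemented ``pieces'' (restriction/corestriction along the canonical projections and injections).

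For part (1): by definition, $T \in (\ideal_\C)_\R$ iff $T_\C \in \ideal_\C$ iff $(T_\C)_\R \in \ideal$. By the remark above $(T_\C)_\R$ is (isomorphic to) $T \oplus T \colon X^2 \to Y^2$. Now if $T \in \ideal$ then $T \oplus T \in \ideal$ since $\ideal$ is an ideal (it is closed under direct sums, as $T \oplus T = i_1 T p_1 + i_2 T p_2$ with the obvious projections and injections); conversely if $T \oplus T \in \ideal$ then $T \in \ideal$ by restricting to the first coordinate, i.e. $T = p_1 (T \oplus T) i_1$. Hence $(\ideal_\C)_\R = \ideal$.

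For part (2): similarly $T \in (\Ideal_\R)_\C$ iff $T_\R \in \Ideal_\R$ iff $(T_\R)_\C \in \Ideal$, and $(T_\R)_\C \simeq T \oplus \overline{T}$ acting from $X \oplus \overline{X}$ to $Y \oplus \overline{Y}$. If $T \oplus \overline{T} \in \Ideal$, then restricting/corestricting along the first coordinate gives $T \in \Ideal$, and along the second gives $\overline{T} \in \Ideal$, i.e. $T \in \overline{\Ideal}$; so $T \in \Ideal \cap \overline{\Ideal}$. Conversely if $T \in \Ideal \cap \overline{\Ideal}$, then $T \in \Ideal$ and $\overline{T} \in \Ideal$, hence $T \oplus \overline{T} \in \Ideal$ as an ideal is closed under direct sums. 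Thus $(\Ideal_\R)_\C = \Ideal \cap \overline{\Ideal}$.

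Part (3) is then immediate: $\Ideal$ is self-conjugate, i.e. $\overline{\Ideal} = \Ideal$, if and only if $\Ideal \cap \overline{\Ideal} = \Ideal$; by part (2) the left side equals $(\Ideal_\R)_\C$, so self-conjugacy is equivalent to $(\Ideal_\R)_\C = \Ideal$. I do not anticipate a serious obstacle here; the only point requiring a little care is making sure that the identifications ``$(T_\C)_\R \simeq T \oplus T$'' and ``$(T_\R)_\C \simeq T \oplus \overline{T}$'' from Remark \ref{rem} are genuinely implemented by the canonical projections and injections, so that membership in an ideal transfers both ways — but this is exactly what the explicit formulas for those isomorphisms in the proof of Remark \ref{rem} provide, together with the ideal property being invariant under isomorphism of the domain and codomain.
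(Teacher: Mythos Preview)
Your proposal is correct and follows essentially the same approach as the paper's own proof: unwind the definitions to reduce to membership of the diagonal matrix $\begin{pmatrix} T & 0 \\ 0 & T \end{pmatrix}$ (on $X^2$ for part (1), on $X \oplus \overline{X}$ for part (2)) in the relevant ideal, and conclude via the ideal property. You have simply been more explicit than the paper about why the diagonal matrix lies in the ideal iff its entries do, and about why $\Ideal \cap \overline{\Ideal} = \Ideal$ is equivalent to $\overline{\Ideal} = \Ideal$ (using that conjugation is an involution).
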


\begin{proof} (1)
Indeed $T \in (\ideal_\C)_\R$ if and only if
$T_\C \in \ideal_C$ if and only if $(T_\C)_\R \in \ideal$, which means that
$\begin{pmatrix} T & 0 \\ 0 & T \end{pmatrix}$ belongs to $\ideal$ and is equivalent to $T \in \ideal$ by the ideal properties.

(2) $T \in (\Ideal_\R)_\C$ if and only if
$T_\C \in \Ideal_C$ if and only if $(T_\R)_\C \in \Ideal$, which means that
$\begin{pmatrix} T & 0 \\ 0 & T \end{pmatrix}$ acting on $X \oplus \overline{X}$ belongs to $\Ideal$; 
this is equivalent to $T, \overline{T} \in \Ideal$ by the ideal properties.

(3) follows immediately.
\end{proof}

\

We shall write about the real and complex versions of the ideals of  strictly singular, strictly cosingular,
inessential operators, and class of improjective operators.
We denote $ ss, cs, in, imp$ the real version and $SS, CS, IN, IMP$ the complex version of these.

Let us first note that a $\C$-linear operator is $\C$-strictly singular as soon as it is $\R$-singular.
In our language
$$ss_\C \subset SS.$$
It is an easy exercise that the property $$u_\C \subset U$$ also
holds if $u=cs, in, imp$ and $U=CS, IN, IMP$, respectively.
Actually we have

\begin{prop}
A real map $T$ is strictly singular (resp. strictly cosingular, inessential, improjective)
if and only if $T_\C$ is strictly singular (resp. strictly cosingular, inessential, improjective). In other words,
$$U_\R=u$$ 
holds if $u=cs, in, imp$ and $U=CS, IN, IMP$, respectively.
 
\end{prop}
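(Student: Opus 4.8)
The plan is to exploit the machinery from Proposition \ref{11} together with the fact, already recorded in the previous proposition, that each of the ideals $SS$, $CS$, $IN$ and the class $IMP$ is self-conjugate. Indeed, once we know $\overline{U}=U$, part (3) of Proposition \ref{11} gives $(U_\R)_\C=U$, and then for a real operator $T$ we have the chain of equivalences $T_\C\in U\Leftrightarrow T\in U_\R\Leftrightarrow T_\C\in (U_\R)_\C=U$; but the content we actually want is the identity $U_\R=u$ of \emph{classes of real operators}, which is not literally about complexifications of real operators but about realifications of complex ones. So the real task is to show, for a genuinely complex operator $S:X\to Y$, that $S$ lies in the complex ideal $U$ if and only if its realification $S_\R:X_\R\to Y_\R$ lies in the corresponding real ideal $u$.

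First I would dispose of the easy direction. If $S$ is $\R$-singular (resp. $\R$-cosingular, $\R$-inessential, $\R$-improjective) then it is $\C$-singular (resp. $\dots$), simply because every complex subspace is in particular a real subspace, every complex quotient a real quotient, every complex (complemented) subspace a real (complemented) one, and Fredholmness passes between the real and complex pictures of a $\C$-linear map. This is the inclusion $U_\R\supset\dots$; more precisely it says $S_\R\in u\Rightarrow S\in U$, i.e. $u\supset U_\R$ read the right way, and is the content of the remark $u_\C\subset U$ transported via realification. The substance is the converse: $S\in U\Rightarrow S_\R\in u$.

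For the converse I would argue through complexification of the realification, using Remark \ref{rem}(b) and the subsequent remark on operators: $(S_\R)_\C$, acting on $(X_\R)_\C\simeq X\oplus\overline{X}$, is identified with $\begin{pmatrix} S & 0 \\ 0 & \overline{S}\end{pmatrix}:X\oplus\overline{X}\to Y\oplus\overline{Y}$. Now $S\in U$ and, by self-conjugacy, also $\overline{S}\in\overline{U}=U$; since $U$ is an ideal, the diagonal operator $S\oplus\overline{S}$ belongs to $U$, hence $(S_\R)_\C\in U$, hence by definition $S_\R\in U_\R$. To finish I need to know that $U_\R$, as a real ideal, is contained in $u$ — equivalently that membership of $S_\R$ in $U_\R$ already forces $S_\R\in u$. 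This is where I would invoke part (1) of Proposition \ref{11} in the form $u=(u_\C)_\R$ together with the established inclusion $u_\C\subset U$: taking realifications (which is monotone) gives $u=(u_\C)_\R\subset U_\R$, and for the reverse inclusion $U_\R\subset u$ I would run the same diagonal argument backwards, noting that $S_\R\in U_\R$ means $S\oplus\overline S\in U$, so $S\in U$ (a complemented restriction of an ideal operator is in the ideal), and then invoke $U=(U_\R)_\C$ for self-conjugate $U$... which risks circularity, so more cleanly: $U_\R\subset u$ should be proved directly from the concrete definitions of $ss,cs,in,imp$ by observing that the real subspace $X\oplus\overline X$ contains a copy of $X_\R$ on which $(S_\R)_\C$ looks like $S_\R$, so any real witness of failure for $S_\R\in u$ lifts to a real witness of failure for $(S_\R)_\C$, contradicting $(S_\R)_\C\in U\subset u_\C\cdots$ — i.e. I would handle each of the four cases ($ss$, $cs$, $in$, $imp$) by the appropriate concrete lifting argument.

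The main obstacle, then, is not the formal ideal-theoretic bookkeeping but the four case-by-case \emph{concrete} lemmas asserting that a real subspace (or real quotient, or real complemented subspace, or a real-Fredholm obstruction) for the complex operator $S$ can be transferred to the corresponding real-geometric datum for $S_\R$ and conversely; the improjective and strictly cosingular cases are the most delicate, since they involve complementation and surjectivity onto quotients, and one must be careful that passing to $X\oplus\overline X$ does not create or destroy such data. I expect the cleanest write-up to isolate a single lemma: for a $\C$-linear $S$, and any of the four properties $\mathcal P\in\{$singular, cosingular, inessential, improjective$\}$, the $\C$-linear operator $S$ has $\mathcal P$ in the complex sense iff $S_\R$ has $\mathcal P$ in the real sense, proved by pairing each real geometric witness with the embedding $X_\R\hookrightarrow X\oplus\overline X$ and using that the remaining self-conjugacy and ideal facts are already available from Proposition \ref{11} and the preceding proposition.
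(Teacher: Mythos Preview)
You have misread the statement. By definition, $U_\R$ is the class of \emph{real} operators $T$ such that $T_\C\in U$. So the assertion $U_\R=u$ says precisely: for a real operator $T$ between real spaces, $T\in u$ if and only if $T_\C\in U$. It is \emph{not} a statement about realifications $S_\R$ of genuinely complex operators $S$; that is the content of the next Corollary ($u_\C=U$), which the paper deduces \emph{from} this Proposition together with self-conjugacy. Your entire argument is an attempt to prove that Corollary, and the circularity you yourself flag (``which risks circularity'') is exactly the point where you need the present Proposition, namely the inclusion $U_\R\subset u$, and cannot get it.

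The paper's proof is direct and short. For $u\subset U_\R$: from the already-noted inclusion $u_\C\subset U$ and monotonicity of $(\cdot)_\R$ one gets $(u_\C)_\R\subset U_\R$, and $(u_\C)_\R=u$ by Proposition~\ref{11}(1). For the substantive inclusion $U_\R\subset u$, one shows the contrapositive in each case by \emph{complexifying the witness of failure}: if the real $T$ is not strictly singular, take a real subspace $Z$ on which $T$ is an isomorphism and observe that $T_\C$ is a $\C$-linear isomorphism on the complex subspace $Z_\C$; if $T$ is not strictly cosingular, complexify the quotient map; if $T$ is not inessential, complexify the operator $V$ for which $Id-VT$ is not Fredholm; if $T$ is not improjective, complexify the complemented subspaces. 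These are the ``concrete lifting arguments'' you gesture at in your last paragraph, but they are the entire proof of the Proposition, not an afterthought, and they concern real $T$ and its complexification $T_\C$, not a complex $S$ and $S_\R$. Once you reorient to the correct statement, the self-conjugacy of $U$ plays no role here; it is used only afterwards, in deriving the Corollary $u_\C=U$ from this Proposition.
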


\begin{proof} We use Proposition \ref{11}. Since $ss_\C \subset SS$ then $ss=(ss_\C)_\R \subset SS_\R$. Conversely if $T: X \rightarrow Y$ is not singular,
let $Z \subset X$ be such that $TI_{Z,X}$ is an isomorphism into $Y$. Then
$T_\C I_{Z_\C,X_\C}$ is a $\C$-linear isomorphism from $Z_\C$ into $Y_\C$ and since $Z_\C$ is a $\C$-linear subspace
of $X_\C$, $T_\C$ is not strictly singular. Summing up $T \notin ss \Rightarrow T \notin SS_\R$.

Since $cs_\C \subset CS$, the inclusion $cs \subset CS_\R$ holds. Conversely if $T: X \rightarrow Y$ is not cosingular, let $Z \subset Y$ be infinite
codimensional such that
$QT$ is surjective, where $Q$ is quotient map from $Y$ onto some $Z$.  Then
$Q_\C$ is the quotient map from $Y_\C$ onto $Z_\C$ and $Q_\C T_\C$ is surjective, therefore $T_\C$ is not cosingular.

Since $in_\C \subset IN$, the inclusion $in \subset IN_\R$ holds. Conversely if $T: X \rightarrow Y$ is not inessential,
let $U: Y \rightarrow  X$ be such that $Id-UT$ is not Fredholm. Then
$(Id-UT)_\C=Id-U_\C T_\C$ is not Fredholm, and therefore $T_\C$ is not inessential.

Since $imp_\C \subset IMP$, the inclusion $imp \subset IMP_\R$ holds. Conversely if $T: X \rightarrow Y$ is not improjective,
let $W$ be complemented in $X$ and $Z$ in $Y$ such that $T$ restricts to an isomorphism between $W$ and $Z$.
Then $T_\C$ restricts to an isomorphism between the complemented subspaces $W_\C$ and $Z_\C$ of $X_\C$ and $Y_\C$ respectively,
so is not inessential.
\end{proof}

\begin{corollary}
 A $\C$-linear operator is strictly singular (resp. strictly cosingular, inessential) if and only if it is strictly singular
 (resp. strictly cosingular, inessential) as $\R$-linear.
In other words $$U=u_\C$$ holds if $u=cs, in, imp$ and $U=CS, IN, IMP$, respectively.
\end{corollary}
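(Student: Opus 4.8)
The corollary is an immediate consequence of the proposition that immediately precedes it, together with Proposition~\ref{11}(1). The plan is as follows.

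First I would observe that the corollary is just a restatement of the equivalence $U = u_\C$. Unwinding the definitions, $T \in u_\C$ means $T_\R \in u$, i.e. $T$ is $u$-type as a real operator, while $T \in U$ means $T$ is $U$-type as a complex operator. So the statement ``$T$ is strictly singular (resp. strictly cosingular, inessential) iff it is so as $\R$-linear'' is precisely $U = u_\C$ with $U = SS, CS, IN$ and $u = ss, cs, in$.

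Next, to prove $U = u_\C$ I would apply the realification functor and use Proposition~\ref{11}(1), which gives $(u_\C)_\R = u$. Since the preceding proposition establishes $U_\R = u$ for $u = cs, in$ (and $ss$, $imp$), we get $U_\R = u = (u_\C)_\R$. Now $U$ and $u_\C$ are both complex ideals, and $U_\R = (u_\C)_\R$; applying the complexification to both sides and using Proposition~\ref{11}(2), $(U_\R)_\C = U \cap \overline U$ and $((u_\C)_\R)_\C = u_\C \cap \overline{u_\C} = u_\C$, the last equality because $u_\C$ is self-conjugate (Proposition on complexifications of real classes). So $U \cap \overline U = u_\C$. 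But each of $SS, CS, IN$ is self-conjugate (the cited Proposition), hence $U = U \cap \overline U = u_\C$, which is what we want. Cleaner still: since $U$ is self-conjugate, Proposition~\ref{11}(3) gives $(U_\R)_\C = U$, and then $U = (U_\R)_\C = (u_\C)_\R{}_\C$... better to argue directly: $U = (U_\R)_\C = u_\C$ using $U_\R = u$ from the previous proposition.

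The argument involves no real obstacle; the only point requiring a little care is making sure the chain of identities only uses ideals (so that the ideal-theoretic computations of Proposition~\ref{11} apply), and that self-conjugacy of $SS, CS, IN$ is invoked exactly where needed to pass from $U \cap \overline U$ to $U$. For $imp$ the class of improjective operators is not an ideal, so one cannot phrase the conclusion in the same clean ``$U = u_\C$'' form via Proposition~\ref{11}; but the equivalence ``$T$ improjective iff $T_\R$ improjective'' still follows directly from the two inclusions $imp_\C \subset IMP$ and $IMP_\R = imp$ proved in the previous proposition, without needing the ideal machinery.
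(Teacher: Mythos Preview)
Your core argument is correct and, in its ``cleaner'' form, is exactly the paper's proof: from the preceding proposition one has $U_\R=u$, and since $U$ is self-conjugate, Proposition~\ref{11}(3) gives $(U_\R)_\C=U$, whence $u_\C=(U_\R)_\C=U$. Your first route through $U\cap\overline U$ is also valid but is just the unfolded version of the same step.

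One genuine issue: your closing remark about improjective operators is not justified. From $imp_\C\subset IMP$ and $IMP_\R=imp$ alone you cannot conclude $IMP\subset imp_\C$. Unwinding, you would need that $T\in IMP$ implies $(T_\R)_\C\in IMP$, i.e.\ that the diagonal operator $\begin{pmatrix}T&0\\0&\overline T\end{pmatrix}$ on $X\oplus\overline X$ is improjective; but improjectivity is not preserved under such diagonal sums in general (this is precisely where the failure of $IMP$ to be an ideal bites). The paper does \emph{not} claim the improjective case here: the list ``$cs,in,imp$'' in the displayed formula appears to be a typo for ``$ss,cs,in$'', and the paper treats improjectivity separately in the next proposition, under the additional hypothesis that $Imp(X^2,Y^2)$ is a linear subspace. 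So your caution about $imp$ is well placed, but the proposed workaround does not go through.
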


\begin{proof} Since $ss=SS_\R$, it follows that $ss_\C=(SS_\R)_\C$ and this is equal to $SS$ by Proposition \ref{11},
since $SS$ is self-conjugate. The same reasoning holds for cosingular and inessential operators. \end{proof}

We formalize these ideas as follows:

\begin{prop}\label{equiv} Let $\Ideal$ be a  complex ideal,  and let $\ideal={\Ideal}_\R$, i.e.,
$T \in \ideal \Leftrightarrow T_\C \in \Ideal.$
Then the following are equivalent:
\begin{itemize} 
\item[(a)] for any  complex operator $T$ between two complex spaces, $T \in \Ideal$ if and only if $T$ seen as real  is in $\ideal$,
 \item[(b)] $\ideal_\C=\Ideal$,
\item[(c)]  $\Ideal$ is self-conjugate. 
\end{itemize}
\end{prop}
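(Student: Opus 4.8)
The plan is to deduce Proposition~\ref{equiv} directly from Proposition~\ref{11}, since most of the work has already been isolated there. The statement links three conditions about a fixed complex ideal $\Ideal$ and the real ideal $\ideal:=\Ideal_\R$, and the natural strategy is to prove the cycle $(c)\Rightarrow(b)\Rightarrow(a)\Rightarrow(c)$.

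\medskip

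For $(c)\Rightarrow(b)$: this is exactly part (3) of Proposition~\ref{11}, which says $\Ideal$ is self-conjugate if and only if $(\Ideal_\R)_\C=\Ideal$; unwinding the definition $\ideal=\Ideal_\R$ this is precisely $\ideal_\C=\Ideal$. So this implication is immediate.

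\medskip

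For $(b)\Rightarrow(a)$: suppose $\ideal_\C=\Ideal$, and let $T$ be a $\C$-linear operator between complex spaces $X$ and $Y$. Seeing $T$ as real means passing to $T_\R\colon X_\R\to Y_\R$, and by definition $T_\R\in\ideal$ iff $(T_\R)_\C\in\Ideal$ (since $\ideal=\Ideal_\R$). By Remark~3 (applied to the $\C$-linear $T$), $(T_\R)_\C$ may be identified with $\begin{pmatrix} T & 0 \\ 0 & T\end{pmatrix}$ acting from $X\oplus\overline X$ to $Y\oplus\overline Y$, which belongs to $\Ideal$ iff both $T\in\Ideal$ and $\overline T\in\Ideal$ (by the ideal properties, exactly as in the proof of Proposition~\ref{11}(2)). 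But the hypothesis $\ideal_\C=\Ideal$ forces $\Ideal$ to be self-conjugate (again Proposition~\ref{11}(3)), so $\overline T\in\Ideal$ is automatic once $T\in\Ideal$. Hence $T_\R\in\ideal$ iff $T\in\Ideal$, which is (a). I should be careful here to phrase this cleanly: the cleanest route is probably to observe that $(b)$ and $(c)$ are literally equivalent by Proposition~\ref{11}(3), and then prove $(c)\Rightarrow(a)$ using the matrix identification plus self-conjugacy, and $(a)\Rightarrow(c)$ separately.

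\medskip

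For $(a)\Rightarrow(c)$: assume (a). We must show $\overline\Ideal=\Ideal$. Given $T\in\Ideal$ (a $\C$-linear operator between complex spaces), condition (a) gives $T_\R\in\ideal$; but $T_\R$ and $\overline T_\R$ are the same $\R$-linear operator (the realification forgets the multiplication by $i$, and this was already noted just before in the proof that $\ideal_\C$ is self-conjugate), so $\overline T_\R=T_\R\in\ideal$, and applying (a) to $\overline T$ yields $\overline T\in\Ideal$. This shows $\Ideal\subseteq\overline\Ideal$; the reverse inclusion follows by symmetry (replace $T$ by $\overline T$, or apply the same argument to $\overline\Ideal$, noting $\overline{\overline T}=T$). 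Hence $\Ideal=\overline\Ideal$, i.e.\ $\Ideal$ is self-conjugate. I don't anticipate a genuine obstacle; the only mild subtlety is making sure every quantifier in (a) is used in the right direction — (a) is a biconditional ranging over all complex operators between all complex spaces, and both directions of it get used (the ``if'' direction in $(a)\Rightarrow(c)$ to land $\overline T$ back in $\Ideal$, and both directions in $(b)/(c)\Rightarrow(a)$ via the block-diagonal identification). Since all the structural inputs — Remark~3(b), the ideal-theoretic splitting of block-diagonal operators, and Proposition~\ref{11}(3) — are already available, the proof is essentially a bookkeeping argument organized around the cycle above.
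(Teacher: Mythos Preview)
Your proof is correct and follows the same approach the paper intends: the paper states Proposition~\ref{equiv} without an explicit proof, treating it as an immediate consequence of Proposition~\ref{11}, which is exactly what you do. One small simplification worth noting: the equivalence $(a)\Leftrightarrow(b)$ is literally the definition of $\ideal_\C$ (namely $T\in\ideal_\C\Leftrightarrow T_\R\in\ideal$), so you need not pass through the block-diagonal identification there; with that observation the whole proposition reduces to $(b)\Leftrightarrow(c)$, which is Proposition~\ref{11}(3) verbatim.
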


\begin{defin} When $\ideal=\Ideal_\R$ and (a)-(b)-(c) of Proposition \ref{equiv} hold, 
we say that $(\ideal, \Ideal)$ is a {\em regular} pair of ideals.
\end{defin}

\begin{cor} The pairs $(ss,SS)$, $(cs,CS)$, and $(in,IN)$ are regular.\end{cor}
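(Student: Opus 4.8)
The plan is to deduce the corollary directly from Proposition \ref{equiv} together with the propositions that have already been established in this subsection. The statement to prove is that the pairs $(ss,SS)$, $(cs,CS)$, and $(in,IN)$ are regular, where by definition regularity of a pair $(\ideal,\Ideal)$ means precisely that $\ideal=\Ideal_\R$ and that conditions (a)--(c) of Proposition \ref{equiv} hold. So the work splits into two verifications for each of the three pairs: first, that the real version is indeed the realification of the complex version, i.e. $ss=SS_\R$, $cs=CS_\R$, $in=IN_\R$; and second, that the complex ideal in question is self-conjugate, which by Proposition \ref{equiv} is equivalent to conditions (a) and (b) and hence gives regularity.

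First I would address the identity $\ideal=\Ideal_\R$ for each of the three cases. This is exactly the content of the Proposition stating that a real map $T$ is strictly singular (resp. strictly cosingular, inessential) if and only if $T_\C$ is, which was phrased there as $U_\R=u$ for $u=cs,in,imp$ and, together with the displayed remark $ss_\C\subset SS$ and the argument given, also for $u=ss$. Unwinding the definition of realification of a class of operators, $T\in\Ideal_\R$ means $T_\C\in\Ideal$, so the statement $U_\R=u$ is literally the assertion $\ideal=\Ideal_\R$ for the relevant pair. Thus this half is immediate from the already-proved proposition and no new argument is needed.

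Second I would invoke self-conjugacy: the Proposition asserting that the ideals of compact, strictly singular, strictly cosingular, and inessential operators (and the class of improjective operators) are self-conjugate gives us that $SS$, $CS$, $IN$ are self-conjugate. By Proposition \ref{equiv}, once we know $\ideal=\Ideal_\R$, self-conjugacy of $\Ideal$ (condition (c)) is equivalent to conditions (a) and (b), and in particular all three conditions of that proposition hold. Therefore $(ss,SS)$, $(cs,CS)$, $(in,IN)$ satisfy the definition of a regular pair. I would write this up in two or three sentences, citing the two propositions and Proposition \ref{equiv}.

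There is no real obstacle here; the corollary is a bookkeeping consequence of the preceding results, and the only thing to be careful about is matching the formal definitions — that $\Ideal_\R$ is defined via $T\in\Ideal_\R\Leftrightarrow T_\C\in\Ideal$, that the earlier proposition's statement $U_\R=u$ is exactly $\ideal=\Ideal_\R$, and that regularity by definition packages together $\ideal=\Ideal_\R$ with the equivalent conditions (a)--(c). The one point meriting a word of care is that Proposition \ref{equiv} as stated already presupposes $\ideal=\Ideal_\R$, so one should first record that hypothesis from the earlier proposition before appealing to the equivalence (a)$\Leftrightarrow$(b)$\Leftrightarrow$(c); after that, self-conjugacy of $SS$, $CS$, $IN$ closes the argument.
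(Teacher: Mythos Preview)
Your proposal is correct and matches the paper's approach exactly: the corollary is stated without proof in the paper, as an immediate consequence of the preceding propositions (that $U_\R=u$ for the three pairs, that $SS$, $CS$, $IN$ are self-conjugate, and Proposition \ref{equiv}), and your write-up simply unpacks this bookkeeping carefully.
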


\

In terms of complex structures on a real Banach space, 
this also means:

\begin{cor} If $(\ideal,\Ideal)$ is a regular pair of ideals,  then
an operator belonging to $\Ideal$ with respect to a complex structure on the real space $X$,
  also belongs to $\Ideal$ with respect to any other complex structure on $X$ for which it is $\C$-linear.
\end{cor}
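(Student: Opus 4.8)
The plan is to deduce this immediately from Proposition \ref{equiv}(a), using the fact that realification is blind to the choice of complex structure: a single real operator $T\in\ideal$ will serve as a common intermediary between the two complex points of view. So the argument is essentially an unwinding of the definitions, and I expect the content to lie entirely in the regularity of the pair $(\ideal,\Ideal)$.

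In detail, let $J$ and $J'$ be two complex structures on the real space $X$, i.e.\ $\R$-linear operators with $J^2=J'^2=-I_X$, and let $T$ be an $\R$-linear operator on $X$ which is $\C$-linear both for $J$ and for $J'$; write $T_J\colon X_J\to X_J$ and $T_{J'}\colon X_{J'}\to X_{J'}$ for the resulting $\C$-linear operators. The first step is to record that $(X_J)_\R=X=(X_{J'})_\R$ as real spaces, and that $(T_J)_\R$ and $(T_{J'})_\R$ are one and the same $\R$-linear map, namely $T$ itself: realification merely ``forgets'' the multiplication by $i$, so it cannot distinguish $J$ from $J'$. The second step is simply to apply Proposition \ref{equiv}(a) to the regular pair $(\ideal,\Ideal)$ twice, once for each complex structure, which yields
$$T_J\in\Ideal \Leftrightarrow (T_J)_\R\in\ideal \Leftrightarrow T\in\ideal \Leftrightarrow (T_{J'})_\R\in\ideal \Leftrightarrow T_{J'}\in\Ideal,$$
and this is exactly the assertion.

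I do not expect any genuine obstacle here; the only point deserving a word of justification is the (evident) fact that the realification of a complex space and of a $\C$-linear operator does not depend on which complex structure one started from, which is precisely what makes the real operator $T\in\ideal$ a common intermediary between the two complex statements. One could optionally remark afterwards that the hypothesis ``for which it is $\C$-linear'' is essential: without the commutation $TJ'=J'T$ there is no operator $T_{J'}$ at all, and the conclusion would be vacuous.
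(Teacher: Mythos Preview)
Your proof is correct and is exactly the argument implicit in the paper: the corollary is stated there without proof, as an immediate consequence of the definition of a regular pair via Proposition~\ref{equiv}(a), and your unwinding---passing through the common real operator $T=(T_J)_\R=(T_{J'})_\R$ and applying (a) once for each complex structure---is precisely the intended justification.
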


Another very relevant family of operator ideals are the ideals $\Op(A)$, generalizing Definition \ref{defop} of $\Op(X)$. 
According to \cite{P} Definition 2.1.1 a {\em space ideal $A$} is a class of spaces containing the finite dimensional ones and stable
under taking direct sums and complemented subspaces. The ideal $\Op(A)$ is defined in \cite{P} 2.2.1:

\begin{defin} If $A$ is a space ideal, then
$T \in \Op(A)$ if and only if $T$ is $X$-factorable for some $X \in A$. 
\end{defin}

If $A$ is a complex space ideal we define in an obvious may the conjugate space ideal $\overline{A}$ by
$$X \in \overline{A} \Leftrightarrow \overline{X} \in A,$$
and say that $A$ is self-conjugate if $A=\overline{A}$.
 If $A$ is complex, we also denote by $\op(A)$ the ideal of real operators which factor (by $\R$-linear operators) through some
$X \in A$ (seen as real). 

\begin{prop}\label{pliz} Let $A$ be a complex and self-conjugate space ideal.
Then $(\op(A),\Op(A))$ is a regular pair of ideals.
 \end{prop}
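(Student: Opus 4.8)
The plan is to verify that the pair $(\op(A),\Op(A))$ satisfies condition (c) of Proposition \ref{equiv} — namely that $\Op(A)$ is self-conjugate — and that the real ideal $\op(A)$ is precisely $\Op(A)_\R$; then the result follows immediately from Proposition \ref{equiv} (with the role of $\ideal$ played by $\op(A)$ and $\Ideal$ by $\Op(A)$). The first point is easy: we already observed that $\overline{\Op(X)}=\Op(\overline X)$ for a complex space $X$, so $\overline{\Op(A)}=\bigcup_{X\in A}\Op(\overline X)=\bigcup_{\overline X\in \overline A}\Op(\overline X)=\Op(\overline A)=\Op(A)$, the last equality because $A$ is self-conjugate. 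Thus $\Op(A)$ is self-conjugate, which is condition (c).

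The step requiring a little care is identifying $\op(A)$ with $\Op(A)_\R$, i.e. showing that for a real operator $T$, $T$ factors through some $X\in A$ seen as real if and only if $T_\C$ factors through some $Y\in A$. One direction is immediate: if $T=UV$ with $V:Z\to X_\R$, $U:X_\R\to W$ and $X\in A$, then $T_\C=U_\C V_\C$ factors through $(X_\R)_\C\simeq X\oplus\overline X$, and $X\oplus\overline X\in A$ since $A$ is a space ideal (closed under direct sums, using self-conjugacy to get $\overline X\in A$). For the converse, suppose $T_\C=U V$ with $V:Z_\C\to Y$, $U:Y\to W_\C$ and $Y\in A$. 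By Remark on realifications, $(T_\C)_\R=\begin{pmatrix}T&0\\0&T\end{pmatrix}$ on $Z_\R\oplus Z_\R=(Z_\C)_\R$, and this factors through $Y_\R$. I then recover $T$ itself as a factor of $\begin{pmatrix}T&0\\0&T\end{pmatrix}$ — namely $T = p\circ \begin{pmatrix}T&0\\0&T\end{pmatrix}\circ \iota$ where $\iota$ and $p$ are the first-coordinate inclusion and projection — so $T$ factors (by $\R$-linear maps) through $Y_\R$. It remains to note that $Y_\R$, as a real space, lies in the class of real spaces whose members are realifications of spaces in $A$; since $Y\in A$, indeed $T\in\op(A)$. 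Here one should double-check the convention that $\op(A)$ is defined via factorization through $\{X_\R : X\in A\}$, which is exactly what the excerpt states.

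The main obstacle, such as it is, is purely bookkeeping: keeping straight which space ideal governs the real factorization and making sure the realification operations commute with the factorizations in the way claimed by the Remarks on realification of operators. In particular one must use that realification and complexification are algebra homomorphisms (stated in the excerpt) so that a factorization $T_\C=UV$ realifies to a factorization $(T_\C)_\R=U_\R V_\R$, and that the block-diagonal identification of $(T_\C)_\R$ is functorial enough to let $T$ be split off as a complemented corner. No genuinely hard analysis is involved; the content is entirely in assembling Proposition \ref{equiv}, the identity $\overline{\Op(X)}=\Op(\overline X)$, the self-conjugacy of $A$, and the stability of space ideals under direct sums.

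\begin{proof}
By the remark following Definition \ref{defop}, for any complex space $X$ one has $\overline{\Op(X)}=\Op(\overline X)$. Hence
$$\overline{\Op(A)}=\bigcup_{X\in A}\overline{\Op(X)}=\bigcup_{X\in A}\Op(\overline X)=\bigcup_{Y\in \overline A}\Op(Y)=\Op(\overline A)=\Op(A),$$
the last equality since $A=\overline A$. So $\Op(A)$ is self-conjugate.

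Next we check $\op(A)=\Op(A)_\R$. If $T\in\Op(A)_\R$, i.e. $T_\C\in\Op(A)$, then $T_\C=UV$ with $V\in L(Z_\C,Y)$, $U\in L(Y,W_\C)$ and $Y\in A$. Applying realification, which is an algebra homomorphism, $(T_\C)_\R=U_\R V_\R$ factors through $Y_\R$ by $\R$-linear maps. Since $(T_\C)_\R=\begin{pmatrix} T & 0 \\ 0 & T\end{pmatrix}$ acting on $Z_\R\oplus Z_\R$, composing with the inclusion of the first summand and the projection onto the first summand exhibits $T$ itself as factoring through $Y_\R$ by $\R$-linear maps; as $Y\in A$, this gives $T\in\op(A)$. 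Conversely, if $T\in\op(A)$, say $T=UV$ with $V\in L(Z,X_\R)$, $U\in L(X_\R,W)$ real and $X\in A$, then $T_\C=U_\C V_\C$ factors through $(X_\R)_\C\simeq X\oplus\overline X$; since $A$ is a self-conjugate space ideal, $\overline X\in A$ and hence $X\oplus\overline X\in A$, so $T_\C\in\Op(A)$, i.e. $T\in\Op(A)_\R$.

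Thus $\ideal:=\op(A)$ equals $\Op(A)_\R$ and $\Ideal:=\Op(A)$ is self-conjugate, which is condition (c) of Proposition \ref{equiv}. Therefore $(\op(A),\Op(A))$ is a regular pair of ideals.
\end{proof}
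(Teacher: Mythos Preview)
Your proof is correct and follows essentially the same approach as the paper: both verify $\op(A)=\Op(A)_\R$ by the same two-directional argument (using $(X_\R)_\C\simeq X\oplus\overline X$ in one direction and the block-diagonal identification of $(T_\C)_\R$ in the other), observe that $\Op(A)$ is self-conjugate, and conclude via Proposition \ref{equiv}. The only difference is that you spell out the self-conjugacy of $\Op(A)$ explicitly whereas the paper calls it obvious, and you make the inclusion/projection step for extracting $T$ from the block matrix slightly more explicit.
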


 \begin{proof} We claim that $\op(A)=\Op(A)_\R$. Indeed assume $T$ is a real operator factoring trough $X \in A$.
  Then $T_\C$ factors through $X_\C$. Since $X_\C$ is isomorphic to $X \oplus \overline{X}$ by Remark \ref{rem}(b),
  and since $A$ is a self-conjugate space ideal, it also belongs to
  $A$. So $T_\C$ belongs to $\Op(A)$, which means by definition that $T$ belongs to $\Op(A)_\R$. Conversely
  if $T_\C$ belongs to $\Op(A)$, then the matrix $\begin{pmatrix} T & 0 \\ 0 & T \end{pmatrix}$ belongs to $\op(A)$
  from which it follows easily that $T$ itself belongs to $\op(A)$.
  Since the claim holds, the result follows from the fact that $\Op(A)$ is obviously self conjugate and from
  Proposition \ref{equiv}.
 \end{proof}

 \
 
 The above extends obviously to ideals of operators $T \in L(Y,Z)$ which factorize through $A$ as  operators
 of $L(Y,Z^{**})$. As easy  application we also obtain the
regular pair of ideals: (real $\ell_p$-factorable operators, complex $\ell_p$-factorable operators),
(real $\sigma$-integral operators, complex $\sigma$-integral operators),..., see \cite{P} 19.3 e 23 for details.
We also leave as an exercise to the reader to find examples of regular pair of ideals related
to the ideal $\Ideal_T$ of operators factorizing through a given operator $T$ (under the necessary restrictions).


\subsection{Improjective operators and examples of non-regular pairs}
Since improjective operators do not form an ideal, according to \cite{AG}, Proposition \ref{equiv} does not apply to them.
What is true is the following slightly more restrictive statement:

\begin{prop}
Let $X, Y$ be two complex Banach spaces such that 
$Imp(X^2,Y^2)$ is a linear subspace of $L(X^2,Y^2)$, 
Then a $\C$-linear operator $T$ between $X$ and $Y$ is improjective if and only if it is improjective as $\R$-linear.
\end{prop}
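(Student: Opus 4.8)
The plan is to prove the two directions separately, using the previously established material on realification and complexification of operators. The easy direction is that if $T$ is improjective as $\R$-linear then it is improjective as $\C$-linear: this is just the inclusion $imp_\C \subset IMP$ already recorded in the excerpt (an $\R$-linear isomorphism between complemented real subspaces would a fortiori... actually one needs the contrapositive, so rather: if $T$ is \emph{not} $\C$-improjective, witnessed by complemented $\C$-subspaces $W \subseteq X$, $Z \subseteq Y$, then $W$ and $Z$ are also complemented real subspaces and $T_{|W}$ is a real isomorphism, so $T$ is not $\R$-improjective). This requires no hypothesis on $X^2, Y^2$.

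For the converse, suppose $T \colon X \to Y$ is $\C$-linear and improjective as a complex operator; I want to show $T_\R$ is improjective. First I would pass to squares: by Remark following the realification definitions, the complexification of the realification, $(T_\R)_\C$, may be identified with $\begin{pmatrix} T & 0 \\ 0 & T \end{pmatrix}$ acting from $X \oplus \overline{X}$ to $Y \oplus \overline{Y}$. The key point is that $\begin{pmatrix} T & 0 \\ 0 & T \end{pmatrix}$ should be improjective on $X \oplus \overline X \to Y \oplus \overline Y$. Since the class of improjective operators is self-conjugate (stated earlier), $\overline{T} \colon \overline X \to \overline Y$ is improjective whenever $T$ is; so $\begin{pmatrix} T & 0 \\ 0 & \overline T \end{pmatrix}$ is a sum (indeed a direct-sum-diagonal combination) of two improjective operators. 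This is \emph{exactly} where the hypothesis that $Imp(X^2, Y^2)$ — or here $Imp(X\oplus\overline X, Y\oplus\overline Y)$, which is the relevant space — is a linear subspace enters: improjective operators are stable under the operations of composing with bounded operators (left and right), and if additionally they are closed under addition on the relevant space, then the block-diagonal operator, being a sum of $\begin{pmatrix} T & 0\\0&0\end{pmatrix}$ and $\begin{pmatrix}0&0\\0&\overline T\end{pmatrix}$, each of which is improjective (a two-sided multiple of an improjective operator), is itself improjective. Then $(T_\R)_\C$ is improjective as a complex operator, and applying the already-proven equivalence-of-statements machinery — specifically the identity $(T_\R)_\C \cong \begin{pmatrix}T&0\\0&T\end{pmatrix}$ together with the easy direction $imp \subseteq IMP_\R$ in reverse — gives that $T_\R$ is improjective. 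Concretely: if $T_\R$ were not improjective, witnessed by complemented real subspaces $W \subseteq X_\R$, $Z \subseteq Y_\R$, then $W_\C \subseteq (X_\R)_\C$ and $Z_\C$ are complemented complex subspaces on which $(T_\R)_\C$ restricts to a $\C$-isomorphism, contradicting $\C$-improjectivity of $(T_\R)_\C$.

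I would write it roughly as follows.

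\begin{proof}
If $T_\R$ is improjective then $T$ is improjective: this is the reverse of the (trivial) inclusion $imp_\C \subseteq IMP$, noting that a complemented $\C$-linear subspace is in particular a complemented $\R$-linear subspace. Equivalently, if $T$ is not $\C$-improjective, as witnessed by complemented $\C$-linear subspaces $W \subseteq X$, $Z\subseteq Y$ with $T_{|W}$ a $\C$-isomorphism of $W$ onto $Z$, then the same $W$, $Z$, viewed as real subspaces, witness that $T_\R$ is not improjective.

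Conversely, assume $T$ is $\C$-improjective. Since the class of improjective operators is self-conjugate, $\overline T \colon \overline X \to \overline Y$ is improjective as well. The operators
$$\begin{pmatrix} T & 0 \\ 0 & 0\end{pmatrix}, \quad \begin{pmatrix} 0 & 0 \\ 0 & \overline T\end{pmatrix}\colon X \oplus \overline X \to Y \oplus \overline Y$$
are improjective, being obtained from $T$ and $\overline T$ respectively by pre- and post-composition with the natural inclusions and projections, which preserve improjectivity. By hypothesis $Imp(X\oplus \overline X, Y\oplus \overline Y)$ is a linear subspace (this is the content of the assumption on $X^2$, $Y^2$, $X$ and $Y$ being isomorphic to their squares is not needed, only the linearity of the relevant space of improjective operators), so their sum
$$\begin{pmatrix} T & 0 \\ 0 & \overline T\end{pmatrix}\colon X \oplus \overline X \to Y \oplus \overline Y$$
is improjective as a complex operator. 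But, up to the identifications of Remark~\ref{rem}(b) and the ensuing remark on operators, this operator is precisely $(T_\R)_\C$ acting from $(X_\R)_\C$ to $(Y_\R)_\C$. Hence $(T_\R)_\C$ is $\C$-improjective; by the easy direction applied to $T_\R$ in place of $T$, if $T_\R$ were not improjective — say $T_\R$ restricts to an $\R$-isomorphism between complemented real subspaces $W$ of $X$ and $Z$ of $Y$ — then $(T_\R)_\C$ would restrict to a $\C$-isomorphism between the complemented $\C$-linear subspaces $W_\C$ and $Z_\C$, a contradiction. Therefore $T_\R$ is improjective.
\end{proof}

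The main obstacle, or rather the only delicate point, is bookkeeping: being careful that the additivity of improjective operators is invoked on the correct space, namely $Imp(X\oplus\overline X, Y\oplus \overline Y)$, which under the standing hypotheses (e.g.\ $X\simeq X^2$) coincides with the statement's $Imp(X^2,Y^2)$; and making sure the identification of $(T_\R)_\C$ with the block-diagonal operator $\mathrm{diag}(T,\overline T)$ on $X\oplus\overline X$ is the one given earlier in the excerpt, not the naive $\mathrm{diag}(T,T)$ on $X\oplus X$. Everything else is a direct application of the self-conjugacy of $IMP$ and the two-sided ideal-type stability of improjective operators under composition.
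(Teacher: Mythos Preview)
Your argument is correct and is essentially the paper's own proof, just run in the direct direction rather than by contrapositive: the paper assumes $T_\R$ is not improjective, complexifies to get that $(T_\R)_\C \cong \mathrm{diag}(T,\overline T)$ is not improjective, and then uses the linearity hypothesis to conclude that one of the diagonal blocks (hence $T$, by self-conjugacy of $IMP$) is not improjective. You also correctly flag the bookkeeping point about $Imp(X^2,Y^2)$ versus $Imp(X\oplus\overline X, Y\oplus\overline Y)$; the paper passes between these with a terse ``equivalently'', so your caution there is well placed.
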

\begin{proof} We already observed that $\R$-improjective implies $\C$-improjective. Assume now $T$ is  not improjective as $\R$-linear. 
Then $(T_\R)_\C$ is not improjective between $X_\C$ and $Y_\C$, which is the same as saying that 
$\begin{pmatrix} T & 0 \\ 0 & T \end{pmatrix}$ is not improjective from $X \oplus \overline{X}$ to $Y \oplus  \overline{Y}$, or equivalently
$\begin{pmatrix} T & 0 \\ 0 & \overline{T} \end{pmatrix}$ is not improjective from $X^2$ to $Y^2$. By the hypothesis, for example
$\begin{pmatrix} 0 & 0 \\ 0 & \overline{T} \end{pmatrix}$ is not improjective from $X^2$ to $Y^2$, which is the same as saying that $\overline{T}$
is not improjective from $X$ to $Y$, and this is equivalent to $T$ not improjective from $X$ to $Y$. 
\end{proof}

\

We can use the examples of non-self conjugate ideals from Section 2 to give immediate examples of pairs which are not regular,
showing that the hypotheses of Proposition \ref{pliz} are necessary.
Let $X$ be $Z_2(\alpha)$ or $\ell_2(F)$ and consider the real and complex ideals
$\Op(X),$
the ideal of $X$-factorable $\C$-linear operators (factorizing with $\C$-linear maps),
and
$\op(X),$
the ideal of $X$-factorable $\R$-linear operators (factorizing with $\R$-linear maps).
Then:

\begin{example} The pair $(\op(X),\Op(X))$ is not a regular pair.
\end{example}

\begin{quest} Find other relevant examples of regular or non-regular pairs of ideals. \end{quest}

\

\section{A solution to the problem of Pietsch}

Recall that an operator ideal (or class) $\Ideal$ is proper if $I_X \in \Ideal$ implies $X$ finite dimensional.
While $\Op(X)$ is the class of $X$-factorable operators,  we also define

\begin{defin} Let $X$ be a Banach space. We denote by
 $\Opp(X)$ the ideal of operators which are $X^n$-factorable for some $n \in \N$.
\end{defin}

It is clear that $\Opp(X)$ is an ideal: if $R, T \in L(Y,Z)$  are $X^m$ and $X^n$-factorable respectively,
then $R+T$ is $X^{m+n}$-factorable. See for example the proof of \cite{P} Theorem 2.2.2. Actually $\Opp(X)$ coincides
with $\Op(A_X)$, if $A_X$ is the space ideal of spaces which embed complementably into some power of $X$.
 
\begin{remark} 
Let $X, Y, Z$ be infinite dimensional Banach spaces. Then 
\begin{itemize}
\item[(a)] $I_Z \in \Op(X)$ if and only if $Z$ embeds complementably in $X$,
\item[(b)] $\Op(X) \cap \Op(Y)$ is proper if and only if $X$ and $Y$ are projectively incomparable,
\item[(c)] $\Opp(X) \cap \Opp(Y)$ is proper if and only $X^m$ and $Y^n$ are projectively incomparable
for all $m,n \in \N$.

\end{itemize}
\end{remark}
\begin{proof}
 (a) If $I_Z=UV$ is a factorization witnessing that $I_Z \in \Op(X)$,
 then $VU$ is a projection onto the isomorphic copy $VZ$ of $Z$.
  (b) the class $\Op(X) \cap \Op (Y)$ is not proper when there exists an infinite dimensional
  space $Z$ such that $I_Z \in \Op(X) \cap \Op(Y)$, i.e. by (a) $Z$ embeds complementably in $X$ and $Y$.
  (c) follows from (b) and the fact that $\Opp(X)=\cup_n \Op(X^n)$. \end{proof}

\

We now consider $X_S$, the ``shift-space" defined by Gowers and Maurey in \cite{GM2},  also \cite{maureyhandbook} and many details
in \cite{maurey} (see also \cite{bence} for considerations on equivalence of projections on $X_S$).  
The space $X_S$ is an indecomposable, 
non hereditarily indecomposable space, admitting a Schauder basis for which the shift operator $S$ is an isometric embedding, implying
that $X$ is isomorphic to its hyperplanes. Actually the complex version of $X_S$ has the very strong following rigidity property: 
\begin{prop}[Gowers-Maurey]\label{GM} The following are equivalent for a subspace $Y$ of $X_S$:
\begin{enumerate}
\item $Y$ is isomorphic to $X_S$
\item $Y$ is complemented in $X_S$
\item $Y$ is finite codimensional in $X_S$
\end{enumerate}
\end{prop}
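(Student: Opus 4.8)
The plan is to establish the four implications $(3)\Rightarrow(2)$, $(2)\Rightarrow(3)$, $(3)\Rightarrow(1)$ and $(1)\Rightarrow(3)$, which together give $(1)\Leftrightarrow(3)\Leftrightarrow(2)$. The implication $(3)\Rightarrow(2)$ is the standard fact that a closed finite-codimensional subspace of a Banach space is complemented: lifting a basis of $X_S/Y$ to vectors $z_1,\dots,z_n\in X_S$, the composition $X_S\to X_S/Y\simeq\mathrm{span}(z_1,\dots,z_n)\hookrightarrow X_S$ is a bounded projection whose complementary projection has range $Y$. For $(2)\Rightarrow(3)$ I would use only that $X_S$ is indecomposable: if $X_S=Y\oplus Z$ with $Z$ closed, then $Y$ is infinite dimensional by our standing convention on ``subspace'', so indecomposability forces $Z$ to be finite dimensional, i.e.\ $Y$ has finite codimension.

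For $(3)\Rightarrow(1)$, put $n=\dim(X_S/Y)<\infty$; by the previous step $X_S=Y\oplus F$ with $\dim F=n$, and I let $P\in L(X_S,Y)$ be the corresponding projection. The defining feature of $X_S$ is that the shift $S$ is an isometric isomorphic embedding onto the closed span of $e_2,e_3,\dots$, a subspace of codimension one; hence $S$ is Fredholm of index $-1$, $S^n$ is Fredholm of index $-n$, $P$ is Fredholm of index $+n$, and by multiplicativity of the Fredholm index the operator $T:=PS^n\colon X_S\to Y$ is Fredholm of index $0$. Finally, a Fredholm operator of index $0$ between Banach spaces differs from an isomorphism by a finite-rank operator: I would pick a finite-rank $G\colon X_S\to Y$ carrying a basis of $\ker T$ onto a basis of a complement of $T(X_S)$ in $Y$ and vanishing on a closed complement of $\ker T$, so that $T+G$ is a continuous bijection, hence an isomorphism; thus $Y\simeq X_S$.

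The implication $(1)\Rightarrow(3)$ carries the real content, and for it I would invoke the Gowers--Maurey analysis of the operator algebra of $X_S$ from \cite{GM2} (see also \cite{maurey},\cite{maureyhandbook}). If $u\colon X_S\to Y$ is an isomorphism and $\iota\colon Y\hookrightarrow X_S$ the inclusion, then $v:=\iota u\in L(X_S)$ is an isomorphic embedding with range $Y$. By the structure theorem for $L(X_S)$, one may write $v=S_\phi+K$ with $K$ strictly singular and $S_\phi=\sum_{k\ge0}a_kS^k$ the operator attached to a symbol $\phi(z)=\sum_{k\ge0}a_kz^k$ in the disc algebra $A(\mathbb D)$ (in fact in the Wiener algebra). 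Being an isomorphic embedding, $v$ is not infinitely singular; and since a strictly singular perturbation of an infinitely singular operator is again infinitely singular, $S_\phi$ is not infinitely singular either. The Gowers--Maurey estimates show that this forces $\phi$ to be nonzero and to have no zero on the unit circle $\partial\mathbb D$ (a symbol vanishing somewhere on $\partial\mathbb D$ produces an infinitely singular operator). Such a $\phi$ factors in $A(\mathbb D)$ as a finite product of linear factors $z-a$ with $|a|<1$ times a function bounded away from zero on $\overline{\mathbb D}$; hence, modulo strictly singular operators, $S_\phi$ is a product of the Fredholm operators $S-aI$ with an invertible operator, so $S_\phi$, and therefore $v=S_\phi+K$, is Fredholm, and the range $Y$ of $v$ has finite codimension, which is $(3)$.

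The main obstacle is precisely this last step. The steps $(3)\Rightarrow(2)$, $(2)\Rightarrow(3)$ and $(3)\Rightarrow(1)$ are soft --- complementation of finite-codimensional subspaces, indecomposability of $X_S$, and elementary Fredholm theory with a routine index bookkeeping over the two spaces $X_S$ and $Y$ --- whereas $(1)\Rightarrow(3)$ rests on the full strength of the Gowers--Maurey construction: the ``shift-plus-strictly-singular'' description of $L(X_S)$ together with the quantitative fact that an operator whose symbol touches zero on $\partial\mathbb D$ is infinitely singular.
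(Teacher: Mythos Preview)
Your overall strategy is sound and matches the paper's treatment (the paper states Proposition~\ref{GM} as a citation from \cite{GM2}, but proves the identical Proposition~\ref{GMbis} for the complexified space, and that is the proof to compare against). The implications $(3)\Rightarrow(2)$ and $(3)\Rightarrow(1)$ are handled as in the paper, and your $(2)\Rightarrow(3)$ via indecomposability is a legitimate shortcut---the paper instead argues directly from the algebra: for a projection $P$ the symbol $\Psi(P)$ is an idempotent in $A(\T)$, hence identically $0$ or $1$, so $P$ or $I-P$ is strictly singular and therefore finite rank. Your route is shorter but presupposes indecomposability as a black box; the paper's route derives it on the spot.

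There is, however, a genuine inaccuracy in your $(1)\Rightarrow(3)$. The structure theorem for $L(X_S)$ does \emph{not} give $S_\phi=\sum_{k\ge 0}a_k S^k$ with symbol in the disc algebra: the algebra $\mathcal A$ contains both the right shift $S$ and the left shift $L$, and the map $\Psi$ lands in the full Wiener algebra $A(\T)=\ell_1(\Z)$ on the circle, not in $A(\mathbb D)$ (see the description of $\Phi$ in Section~5 of the paper and \cite{GM2}). Consequently your factorisation of $\phi$ as a finite product of linear factors $z-a$ times a unit of $A(\mathbb D)$ is not available in general, and even in the analytic case you would still need the left shift to exhibit an inverse-modulo-strictly-singular for $S-aI$. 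The correct argument---and the one the paper uses via Proposition~\ref{abc}~(3) and Proposition~\ref{AAA}---is Wiener's lemma: once you know $\Psi(v)$ has no zero on $\T$, it is invertible in $A(\T)$, you lift the inverse to an operator $U\in\mathcal A$ (necessarily involving powers of $L$), and then $Uv-I$, $vU-I$ are strictly singular, so $v$ is Fredholm. Replace your disc-algebra factorisation paragraph by this Wiener-lemma step and the proof is complete.
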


We shall use the next crucial proposition, whose proof is postponed until the next section and is of a more technical nature.
The proof involves multidimensional versions of the machinery used by Gowers and Maurey in \cite{GM2}, and therefore requires some familiarity
with the use of $K$-theory for algebras of operators on Banach spaces and in particular properties of Fredholm operators, as quite well explained in
\cite{maurey}. It also requires certain facts of the $K$-theory of the
Wiener algebra $A(\T)$, as well as some conditions to apply complexification and obtain the real case. For these reasons we keep
those details for the next section.

\begin{prop}\label{mainprop} Let $X_S$ be the real or complex shift space of Gowers-Maurey. Assume $m, n \in \N$. 
Let $Y$ be an infinite codimensional subspace of $X_S$.
Then there is no isomorphism between a complemented subspace of $X_S^m$ and a
 subspace of $Y^n$.
\end{prop}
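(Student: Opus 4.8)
The plan is to exploit the structural rigidity of $X_S$ (Proposition \ref{GM}) together with $K$-theoretic information about the operator algebras $L(X_S^m)$, reducing everything to the scalar case $m=n=1$ by a diagonal trick and then arguing by contradiction. First I would record the one-dimensional version: if $Y$ is infinite-codimensional in $X_S$ and $Z$ is a complemented subspace of $X_S$ that embeds into $Y$, then by Proposition \ref{GM}(1)$\Leftrightarrow$(2) either $Z$ is finite-dimensional or $Z\simeq X_S$; in the latter case $X_S$ embeds into $Y$, hence some finite-codimensional (by Prop.\ \ref{GM} again, since the image is complemented in $X_S$ via $Y$'s complementation) copy of $X_S$ sits inside the infinite-codimensional $Y\subset X_S$, which forces $Y$ to be finite-codimensional in $X_S$ --- a contradiction. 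So the statement is clear for $m=n=1$; the work is to propagate it to arbitrary powers.

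For general $m,n$, suppose toward a contradiction that $u\colon W\to Y^n$ is an isomorphism onto its image, where $W$ is complemented in $X_S^m$. Let $P\in L(X_S^m)$ be a projection with range $W$. The heart of the argument is to analyze the algebra $L(X_S^m)$: writing operators as $m\times m$ matrices with entries in $L(X_S)$, each entry is, by Proposition \ref{GM} and the Fredholm dichotomy available on $X_S$ (every operator on $X_S$ is either finitely singular, in fact Fredholm with index a multiple of the shift's, or inessential --- this is the indecomposability input cited before Prop.\ \ref{GM}), either a "small" (inessential / strictly singular) operator or invertible modulo such. One should extract from this that $L(X_S^m)/M(X_S^m)$, where $M$ is the ideal of inessential operators, is a matrix algebra over a commutative ring closely modeled on (a piece of) the Wiener algebra $A(\T)$, so that idempotents $P$ in $L(X_S^m)$ have a well-defined rank/trace in $K_0$, and a complemented subspace $W$ of $X_S^m$ is, up to finite-dimensional and inessential corrections, isomorphic to $X_S^k$ for a well-defined $k\le m$. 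Thus $W\simeq X_S^k\oplus E$ with $E$ finite-dimensional, and we may assume $W=X_S^k$.

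Now the composition $X_S^k\xrightarrow{\ u\ }Y^n\hookrightarrow X_S^n$ gives $n$ operators $X_S^k\to X_S$ (the coordinates of $u$), and $u$ being an isomorphic embedding means the corresponding column of operators is "jointly finitely singular". Reading this through the quotient by inessential operators and through Proposition \ref{GM}, at least one coordinate $u_j\colon X_S^k\to X_S$ must be finitely singular on a complemented copy of $X_S$ inside $X_S^k$, hence (by the $m=n=1$ case applied with domain that copy of $X_S$, which is finite-codimensional in a factor, mapping into $Y$) we land inside $Y$ an isomorphic complemented-in-$X_S$ image of $X_S$; the previous paragraph's contradiction then reappears: a finite-codimensional subspace of $X_S$ would embed into the infinite-codimensional $Y$. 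To make "at least one coordinate is essentially injective on a copy of $X_S$" precise one uses that an embedding $X_S^k\to X_S^n$ cannot have all coordinate maps inessential (else $u$ itself would be inessential, contradicting that it is an isomorphic embedding of the infinite-dimensional space $X_S^k$), combined with the HI-type fact that a single essential operator out of $X_S$ is finitely singular after passing to a complemented (hence finite-codimensional, hence $\simeq X_S$) subspace.

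The main obstacle, and the reason the paper defers the proof to the next section, is the structural analysis of $L(X_S^m)$ --- establishing that every complemented subspace of $X_S^m$ is isomorphic to $X_S^k$ up to finite-dimensional summands, with a well-defined $k$. This requires genuinely multidimensional versions of the Gowers--Maurey machinery: one must show that a projection on $X_S^m$ is, modulo inessential operators, conjugate to a "coordinate projection" of well-defined rank, which is exactly where the $K$-theory of $A(\T)$ enters (to rule out exotic idempotents in the matrix algebra over the Wiener-algebra-like quotient). The reduction steps above are comparatively routine once this classification is in hand, but without it one cannot even speak of "$W\simeq X_S^k$", and the coordinate-extraction argument has nothing to stand on. I would therefore organize the section so that this classification of complemented subspaces (and of idempotents in $L(X_S^m)$ modulo inessentials) is proved first, as a standalone multidimensional analogue of Proposition \ref{GM}, and then deduce Proposition \ref{mainprop} as above.
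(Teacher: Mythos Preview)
Your overall architecture matches the paper's: first establish that every infinite-dimensional complemented subspace of $X_S^m$ is isomorphic to some $X_S^k$ (this is exactly Proposition \ref{multi}, and you are right that it is the hard $K$-theoretic step, resting on the classification of idempotents in $M_\infty(A(\T))$ by rank), then reduce to the existence of an embedding $R\colon X_S\to Y^n$, and finally derive a contradiction. The first two steps are correct and coincide with the paper's route.

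The genuine gap is in the final step. You invoke an ``HI-type fact that a single essential operator out of $X_S$ is finitely singular after passing to a complemented (hence finite-codimensional) subspace''. This is false on $X_S$, precisely because $X_S$ is \emph{not} HI. The operator $Id-S$ is essential (with $U=\tfrac12 Id$ one gets $Id-\tfrac12(Id-S)=\tfrac12(Id+S)$, whose $\Psi$-image $\tfrac12(1+e^{i\theta})$ vanishes at $\theta=\pi$, hence not Fredholm), yet $Id-S$ is infinitely singular, and remains so on every finite-codimensional subspace. More to the point, for an embedding $u\colon X_S\to X_S^n$ the coordinate maps $u_j\colon X_S\to X_S$ can \emph{all} be infinitely singular: each $\Psi(u_j)$ may vanish somewhere on $\T$ (at different points) while $\sum_j|\Psi(u_j)|^2$ never vanishes. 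So the single-coordinate extraction cannot work as stated. (A minor slip in your $m=n=1$ paragraph: $Y$ is not assumed complemented, so ``via $Y$'s complementation'' is not the reason; the image of $X_S\hookrightarrow Y\subset X_S$ is finite-codimensional simply by Proposition \ref{GM}, $(1)\Rightarrow(3)$.)

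The paper's endgame (Proposition \ref{important}) does not isolate a coordinate but combines them all, using that $\mathcal A$ is \emph{commutative}. With $T=\Phi(R)\in M_{1,n}(\mathcal A)$ finitely singular, the image $TX$ is complemented in $X^n$ by an explicit projection $P\in M_n(\mathcal A)$ (Lemma \ref{projection}); then Lemma \ref{tecnico} gives $U_i,V_i\in\mathcal A$ with $PX^n=\{(U_1x,\dots,U_nx):x\in X\}$ and $\sum_i V_iU_i=Id_X$, hence $\sum_i U_iV_i=Id_X$ by commutativity. Since $T=R+s$ with $s$ strictly singular and $R$ takes values in $Y^n$, each $U_i$ differs from a $Y$-valued operator by a strictly singular $s_i$; thus $\sum_i(U_i-s_i)V_i$ has range in $Y$ but equals $Id_X-\sum_i s_iV_i$, so a Fredholm operator on $X$ has range in the infinite-codimensional $Y$, the desired contradiction. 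The commutativity trick $\sum V_iU_i=\sum U_iV_i$, not a coordinate selection, is the missing idea in your argument.
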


Let us note here that we shall actually prove that a complemented subspace of $X_S^m$ must be
isomorphic to $X_S^q$ for some $q \leq m$, and therefore Proposition \ref{mainprop} will follow from the fact
that $X_S$ does not embed into $Y^n$. Note also that the case $m=n=1$ in the complex case is immediate from Proposition \cite{GM}
and this is the idea that was used in \cite{AG}.

\

Let us first mimic the construction of \cite{AG} inside $X_S$.
Given $t \in \T$ (resp. $\{-1,1\}$ in the real case), the operator $Id-tS$ is injective.
We claim that its image is not closed; indeed otherwise $Id-tS$ would be an isomorphism
onto its image, and this is false, by considering for any $N \in \N$, the vector 
$$x_N=\sum_{n=1}^N t^n e_n,$$ which
has norm at least $n/\log_2(n+1)$ by \cite{GM2} Theorem 5, while
$$(Id-tS)(x_N)=e_0-t^{n+1} e_{n+1}$$ has norm at most $2$.
This implies that for any $t \in \T$ (resp. $\{-1,1\}$ in the real case) and for some compact operator $K_t$ on $X$, the operator
$$T_t:=Id-tT+K_t$$
has image of infinite codimension (see for example \cite{LS} Theorem 5.4). Denote $Y_t={\rm Im}(T_t)$.

\begin{prop} Given $t \in \T$ (resp.  $\{-1,1\}$ in the real case), the ideal  $\Ideal_{t}:=\Opp(X_S) \cap \Opp(Y_t)$ is a proper ideal
which is not contained in the ideal of inessential operators.
\end{prop}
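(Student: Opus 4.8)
The plan is to verify the three assertions of the proposition in turn: that $\Ideal_t$ is an operator ideal, that it is proper, and that it is not contained in $\In$. The first is immediate, since $\Opp(X_S)$ and $\Opp(Y_t)$ are operator ideals and the intersection of two operator ideals is again one.

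For properness I would appeal to part (c) of the Remark following the definition of $\Opp$: the class $\Opp(X_S)\cap\Opp(Y_t)$ is proper exactly when $X_S^m$ and $Y_t^n$ are projectively incomparable for all $m,n\in\N$. Now $T_t$ has range of infinite codimension, so $Y_t={\rm Im}(T_t)$ is a closed, infinite-dimensional, infinite-codimensional subspace of $X_S$, and Proposition \ref{mainprop} therefore applies, giving for every pair $m,n$ that no complemented subspace of $X_S^m$ is isomorphic to a subspace of $Y_t^n$. In particular no complemented subspace of $X_S^m$ is isomorphic to a complemented subspace of $Y_t^n$, which is precisely the statement that $X_S^m$ and $Y_t^n$ are projectively incomparable; hence $\Ideal_t$ is proper.

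For the last assertion I would exhibit an explicit essential operator inside $\Ideal_t$, namely $\hat T_t\in L(X_S,Y_t)$, the operator $T_t$ with codomain corestricted to $Y_t={\rm Im}(T_t)$. It factors trivially through $X_S$ (as $\hat T_t=\hat T_t\circ I_{X_S}$) and trivially through $Y_t$ (as $\hat T_t=I_{Y_t}\circ\hat T_t$), so $\hat T_t\in\Op(X_S)\cap\Op(Y_t)\subseteq\Opp(X_S)\cap\Opp(Y_t)=\Ideal_t$. To see that $\hat T_t$ is essential, let $j\colon Y_t\hookrightarrow X_S$ be the inclusion, so $j\hat T_t=T_t$ as an operator on $X_S$; since $\In$ is an ideal it suffices to check that $T_t\notin\In(X_S)$, i.e. that $I_{X_S}-UT_t$ fails to be Fredholm for some $U\in L(X_S)$. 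Taking $U=\frac12 I_{X_S}$ gives $I_{X_S}-\frac12 T_t=\frac12\bigl(I_{X_S}-(-t)S\bigr)-\frac12 K_t$. Applying the argument used above for $T_t$ with $-t$ (again unimodular, or in $\{-1,1\}$ in the real case) in place of $t$, the operator $I_{X_S}-(-t)S$ has non-closed range, hence after a compact perturbation has range of infinite codimension and is therefore not Fredholm; consequently neither is $\frac12\bigl(I_{X_S}-(-t)S\bigr)$, and neither is its compact perturbation $\frac12\bigl(I_{X_S}-(-t)S\bigr)-\frac12 K_t=I_{X_S}-\frac12 T_t$. Thus $T_t\notin\In(X_S)$, so $\hat T_t$ is essential and $\Ideal_t\not\subseteq\In$. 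This is the same essentiality phenomenon already exploited by Aiena and Gonz\'alez in \cite{AG}.

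The genuine difficulty does not lie in this proposition but in Proposition \ref{mainprop}, on which the properness of $\Ideal_t$ entirely rests and whose proof is the technical heart postponed to the next section. Within the present argument the only points that deserve attention are the bookkeeping guaranteeing that a suitable compact perturbation turns $I_{X_S}-tS$ into an operator with closed range of infinite codimension — so that $Y_t$ is a genuine infinite-dimensional, infinite-codimensional closed subspace of $X_S$ — together with the elementary observation that multiplying a non-Fredholm operator by a nonzero scalar, or perturbing it by a compact operator, leaves it non-Fredholm.
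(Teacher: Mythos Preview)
Your proof is correct and follows essentially the same approach as the paper: both use Proposition \ref{mainprop} (via the Remark (c)) for properness, and both exhibit the essential operator $T_t\colon X_S\to Y_t$ in $\Ideal_t$, checking essentiality by taking $U=\tfrac{1}{2}j$ and observing that $I_{X_S}-\tfrac{1}{2}T_t=\tfrac{1}{2}(I+tS-K_t)$ is a compact perturbation of the non-Fredholm operator $\tfrac{1}{2}(I+tS)$. Your write-up is slightly more detailed on the bookkeeping (closedness of $Y_t$, stability of non-Fredholmness under scalar multiples and compact perturbations), but the substance is identical.
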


\begin{proof} Since $Y_t$ is infinite-codimensional, by Proposition \ref{mainprop}, any powers of $X_S$ and of $Y_t$ are
projectively incomparable, or equivalently, $\Opp(X_S) \cap \Opp(Y_t)$ is a proper ideal.
 Denote by $i_{Y,X_S}$  the canonical inclusion of $Y$ inside $X_S$.
 The operator $T_t: X_S \rightarrow Y_t$ belongs to $\Ideal_t$, and it is essential, since 
 $Id-\frac{1}{2} i_{Y,X_S}T_t=I-\frac{1}{2}(I-tS+K_t)=\frac{1}{2}(I+tS-K_t)$ is not Fredholm.
\end{proof}

\begin{thm} There is no largest proper real or complex ideal.  \end{thm}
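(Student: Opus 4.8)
The plan is to bootstrap from the previous proposition, which already produces, for each admissible parameter $t$, a proper ideal $\Ideal_t = \Opp(X_S) \cap \Opp(Y_t)$ containing an essential operator $T_t$. The key observation is that if a largest proper ideal $\Ideal_{\max}$ existed, it would have to contain every $\Ideal_t$, and in particular it would contain $T_t$ for every $t$. So the strategy is to exhibit two such essential operators, arising from different parameter values, whose sum is \emph{not} in any proper ideal — indeed whose sum is invertible on $X_S$ — thereby contradicting the existence of $\Ideal_{\max}$ (since an ideal containing both would contain their sum, but no proper ideal can contain an invertible operator on an infinite dimensional space).

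Concretely, I would pick two distinct admissible parameters $t, t'$ (e.g. $t=1$, $t'=-1$ in the complex case, which both lie in $\T$; in the real case the two choices $\{1\}$ and $\{-1\}$ are already forced and still distinct). Following the recipe before the previous proposition, $T_t = \Id - tS + K_t$ factors through $X_S$ and lands in $Y_t$, so $T_t \in \Ideal_t = \Opp(X_S)\cap\Opp(Y_t)$; likewise $T_{t'} \in \Ideal_{t'}$. If $\Ideal_{\max}$ were the largest proper ideal, then $\Ideal_t \subseteq \Ideal_{\max}$ and $\Ideal_{t'}\subseteq \Ideal_{\max}$, hence $T_t + T_{t'} \in \Ideal_{\max}$. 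But
$$T_t + T_{t'} = 2\,\Id - (t+t')S + K_t + K_{t'},$$
and for the choice $t' = -t$ this is $2\,\Id + K_t + K_{t'}$, a compact perturbation of a multiple of the identity, hence Fredholm of index zero; a further finite-rank correction (or passing to a finite-codimensional restriction and using Proposition \ref{GM}) makes it invertible on $X_S$, i.e. $I_{X_S} \in \Ideal_{\max}$ up to the standard adjustment, contradicting properness. This is precisely the "two improjective operators whose sum is invertible" phenomenon of Aiena--Gonz\'alez alluded to in the introduction, now upgraded to the $\Opp$ setting where each summand genuinely lies in an ideal.

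The main obstacle is the bookkeeping around the compact perturbations $K_t$: the sum $K_t + K_{t'}$ is compact but $2\,\Id + K_t + K_{t'}$ need not be literally invertible, only Fredholm of index $0$. The clean way around this is to note that $\Id + \frac12(K_t+K_{t'})$, being Fredholm of index zero on $X_S$, restricts to an isomorphism onto a finite-codimensional — hence by Proposition \ref{GM} complemented and isomorphic to $X_S$ — subspace; composing with the complementation projection exhibits $I_{X_S}$ (or $I_Z$ for $Z \simeq X_S$) as a member of whichever ideal contains $T_t + T_{t'}$. Since $I_{X_S}\in\Ideal$ with $X_S$ infinite dimensional contradicts properness of $\Ideal$, no proper ideal can contain both $T_t$ and $T_{t'}$, so the family $\{\Ideal_t\}$ has no common proper upper bound, and a fortiori there is no largest proper ideal. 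The same argument runs verbatim in the real case with $t \in \{-1,1\}$, so both Theorem \ref{2} and Theorem \ref{3} follow; the transfer to the real setting of $X_S(\R)$ via complexification, flagged in the introduction, is what makes the real statement drop out of the same construction.

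\begin{proof}
Suppose toward a contradiction that $\Ideal_{\max}$ is the largest proper operator ideal (in the complex, resp. real, setting). Take $t = 1$ and $t' = -1$ (both admissible: $t, t' \in \T$ in the complex case and $t,t' \in \{-1,1\}$ in the real case). By the construction preceding Proposition \ref{GM}, there are compact operators $K_t, K_{t'}$ on $X_S$ such that $T_t := \Id - tS + K_t$ and $T_{t'} := \Id - t'S + K_{t'}$ have images $Y_t, Y_{t'}$ of infinite codimension, and $T_t \in \Ideal_t = \Opp(X_S) \cap \Opp(Y_t)$, $T_{t'} \in \Ideal_{t'} = \Opp(X_S) \cap \Opp(Y_{t'})$, both proper ideals by the previous proposition. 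By maximality, $\Ideal_t \subseteq \Ideal_{\max}$ and $\Ideal_{t'} \subseteq \Ideal_{\max}$, so $T_t + T_{t'} \in \Ideal_{\max}$.

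Now, since $t' = -t$,
$$T_t + T_{t'} = 2\,\Id - (t + t')S + K_t + K_{t'} = 2\,\Id + (K_t + K_{t'}),$$
and $K := \tfrac12(K_t + K_{t'})$ is compact, so $\Id + K$ is Fredholm of index $0$ on $X_S$. Hence $\Id + K$ restricts to an isomorphism from some finite-codimensional subspace $W$ of $X_S$ onto a finite-codimensional subspace $Z$ of $X_S$. By Proposition \ref{GM}, $Z$ is complemented in $X_S$ and isomorphic to $X_S$; let $P$ be a projection of $X_S$ onto $Z$ and let $R: Z \to W$ be the inverse of $(\Id+K)|_W$. Then
$$R \circ P \circ \tfrac12(T_t + T_{t'}) = R \circ P \circ (\Id + K)$$
is the identity on $W$, so $I_W \in \Ideal_{\max}$; as $W$ is infinite dimensional, this contradicts the properness of $\Ideal_{\max}$.

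Therefore no proper ideal contains both $T_t$ and $T_{t'}$, so there is no largest proper ideal. The argument is identical in the real case (working with $X_S(\R)$ and $t, t' \in \{-1,1\}$), which yields the stated conclusion in both settings.
\end{proof}
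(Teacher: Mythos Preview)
Your proof is correct and follows essentially the same approach as the paper: choose $t=1$, $t'=-1$, observe that $T_1+T_{-1}=2\,Id+K_1+K_{-1}$ is Fredholm on $X_S$, and conclude that any ideal containing both $\Ideal_1$ and $\Ideal_{-1}$ must contain $Id_{X_S}$. The only difference is that the paper bypasses your detour through Proposition~\ref{GM}, simply using that a Fredholm operator in an ideal forces the identity into the ideal (since every operator ideal contains all finite-rank operators and a Fredholm operator is invertible modulo finite rank).
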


\begin{proof} An ideal $\Ideal$ containing all proper ideals msut contain $\Ideal_1$ and $\Ideal_{-1}$.
Therefore the operators $T_1=Id-S+K_1$ and $T_{-1}=Id+S+K_{-1}$ 
 belong to $\Ideal$ and the same holds for these operators seen as operators from $X_S$ to $X_S$. 
 Then the Fredholm operator $T_1+T_{-1}=2Id+K_1+K_{-1}$
 on $X_S$ belongs to $\Ideal$, and therefore $Id_{X_S}$ belongs to $U$. 
 Since $X_S$ is infinite dimensional, $\Ideal$ cannot be proper.
\end{proof}










\section{The proof of projective incomparability}

This section is devoted to the proof of Proposition \ref{mainprop}.

\subsection{Complex version versus complexification of the shift space}

\

We recall a few facts from \cite{GM2}. If $X_S(\K)$ is the version of the shift space defined on $\K=\R$ or $\C$, then
there exists an algebra homomorphism and projection map $\Phi$ from $L(X_S(\K))$ to some algebra of operators denoted $\algebra$.
$S$ denotes the right shift and $L$ the left shift on the canonical basis of $X_S$.
Elements of $\algebra$ are those of the form $\Phi(T)=\sum_{k \geq 0} a_k S^k +\sum_{k \geq 1} a_{-k} L^k$
for some sequence $(a_k)_k \in \ell_1(\Z,\K)$, which we shall denote $(a_k(T))_k$, and we have that
$\|\Phi(T)\|=\sum_{k \in \Z}|a_k(T)|$.
For simplification we shall denote $\Phi(T)=\sum_{k \in \Z} a_k S^k$ in the situation above, even
if $S$ is not formally invertible.
The map $\Phi$ has the property that $T-\Phi(T)$ is strictly singular for any $T \in L(X_S(\K))$, which allows to reduce most of the
study of operators on $X_S(\K)$ to operators in $\algebra$.

From this the authors of \cite{GM2} concentrate on the complex case, in which case $\ell_1(\Z)$ identifies with the Wiener algebra
$A(\T)$ of complex valued functions in $C(\T)$ whose Fourier series have absolutely summable coefficients.

We may use the complex version $X_S(\C)$ of $X_S$ to give a negative answer to the question of Pietsch in the complex case.
In order to be able to treat the real case as well we shall see that it is enough to replace $X_S(\C)$ by the complexification
of the real version of $X_S$, denoted $(X_S(\R))_\C$. A few comments are in order. Both $X_S(\C)$ and $(X_S(\R))_\C$ have  natural Schauder bases and contain two
canonical isometric
real subspaces $W$ and $iW$, where $W$ is the space generated by real linear combinations of elements of the basis.
While in the complexification $(X_S(\R))_\C$ these two form a direct sum, this is probably not the case inside $X_S(\C)$. Indeed the ``no shift" version
of the norm
of this space is the norm on Gowers-Maurey's HI space, which is known to be HI as a real space
( see the comments on p475 of \cite{F}) and therefore indecomposable as a real space,
and it is probable that similarly $W$ and $iW$ do not form a direct sum in $X_S(\C)$. This makes it more difficult to study real subspaces of
$X_S(\C)$ and suggests the use of $(X_S(\R))_\C$ instead. 

\

Consider 
the complexification ${X_S(\R)}_\C$. Note that it is  equipped with the complexification of the shift operator on $X_S(\R)$, which is just the shift operator
on ${X_S(\R)}_\C$ with its natural basis, and which we denote also $S$; therefore $S$ is a power bounded, isomorphic
embedding on the space, inducing an isomorphism with
its hyperplanes. Likewise the complexification of the left shift is power bounded.
By classical results about complexifications, operators on the space are of the form $T=A+iB$, where $A,B$ are real operators
(meaning that the formula $(A+iB)(x+iy)=Ax-By+i(Bx+Ay)$ holds); it follows that 
$$T(x+iy)=\sum_{k \in \Z} (a_k+ib_k) S^k(x+iy)+(V+iW)(x+iy)=\sum_{k \in \Z} \lambda_k S^k (x+iy)+(V+iW)(x+iy),$$
where the series $\lambda_k$ is absolutely summable in $\C$, the action of $S$ on the complex space $(X_S(\R))_\C$
is identified with the shift operator  $S$ there,
and where $V, W$ are strictly singular. By the results of Section 3 this is the same as saying
that $T-\sum_k \lambda_k S^k$ is strictly singular as a complex operator. Therefore we may also define
an algebra homorphism and projection map
(again called $\Phi$) from $L({X_S(\R)}_\C)$ to the algebra (again denoted $\algebra$) of operators of the form
$\Phi(T)=\sum a_k S^k$ for $(a_k)_k \in \ell_1(\Z,\C)$ denoted $(a_k(T))_k$.

\

Summing up,  in what follows, $X$ will denote either the complex version $X_S(\C)$ of the shift space,
or the complexification ${X_S(\R)}_\C$ of the real version of the shift space, and $\algebra$ and $\phi$
the corresponding algebra and map.

\

As in \cite{GM2}, $\Psi$ is the map defined from $L(X)$ to the Wiener algebra $A(\T)$ by
$$\Phi(T)=\sum_{k \in \Z} a_k S^k \Rightarrow \Psi(T)(e^{i\theta})=\sum_{k \in \Z} a_k e^{ki\theta}.$$
While in the case of $X=X_S(\C)$, $\Psi$ induces an isometric isomorphism between 
$\algebra$ and $A(\T)$ (\cite{GM2} Lemma 11), in the case of $X=(X_S(\R))_\C$ this is just
an isomorphism, whose constant depends on the equivalent norm chosen on $(X_S(\R))_\C$ (by
\cite{GM2} Lemma 11 in the real case).
This does not affect the rest of our computations.

We shall also denote by $\Phi$ the induced projection from $L(X^m,X^n)=M_{m,n}(L(X))$ onto $M_{m,n}(\algebra)$, i.e.
if $T=(T_{ij})_{i,j} \in M_{m,n}(L(X))$ then we define 
$$\Phi((T_{ij})_{i,j})=((\Phi(T_{ij})_{i,j})$$
and we note that
$$\Phi(T)=\sum_k A_k S^k$$ where $A_k=A_k(T) \in M_{m,n}(\C)$ is the matrix $(a_k(T_{ij}))_{i,j}$.

Likewise we define a map $\Psi$ from $L(X^m,X^n)$ to $M_{m,n}(A(\T))$  by the formula
$$\Psi(T)(e^{i\theta})=\sum_k A_k(T) e^{ik\theta}.$$

\

We shall make use of some notation and results of $K$-theory of Banach algebras. If $A$ is a Banach algebra,
then $M_\infty(A)$ denotes the set of $(n,n)$-matrices of elements of $A$ of arbitrary size, i.e.
$M_\infty(A)=\cup_n M_n(A)$ with the natural embeddings of $M_n(A)$ into $M_{n+1}(A)$.
Idempotents of $M_\infty(A)$ coincide with idempotents in one of the $M_n(A)$.
Among them $I_n$ denotes the identity on $M_n(A)$ (seen inside $M_\infty(A)$).
As usual $GL_n(A)$ 
 denotes the set of invertibles in $M_n(A)$.
If $A \subseteq L(X)$ is an algebra of operators on a space $X$ then $I_n$ will also be denoted
$Id_{X^n}$ or $I_{X^n}$.
Two idempotents $P,Q$ of $M_\infty(A)$ are {\em similar} if there exists
some $N \in \N$ and some $M$  in $GL_N(A)$, such that, denoting the natural
copy of $M$ inside $M_\infty(A)$ still by $M$, the relation
$$P=M^{-1}QM$$ holds. Note in particular that if $P$ and $Q$ are two similar
idempotents in $M_\infty(L(X))$ for some $X$ then the images $PX$ and $QX$ are
isomorphic.  Regarding the very basic results of $K$-theory we shall use, we
refer to \cite{blackadar} for background and \cite{maurey}
for a survey in a language familiar to Banach spaces specialists.

\subsection{Properties in the Wiener algebra $A(\T)$}

We recall classical or easy properties of the algebra $C(\T)$ of continuous complex functions on the complex circle $\T$, the Wiener algebra 
$A(\T)$ of functions in $C(\T)$ with absolutely summable Fourier series,
and their matrix algebras. They are certainly folklore but not always easy to find explicitely in the literature, so
we sometimes prefered to give a short proof rather than a too abstract or too general argument. We recall Wiener's Lemma \cite{wiener}: if
an element of $A(\T)$  is invertible in $C(\T)$ (i.e. does not vanish on $\T$), then its inverse belongs to $A(\T)$ as well.
 See \cite{maurey}, either Lemma 7.2 for a Banach space theoretic proof
 or the commentary after Proposition 2.2 for the classical proof. 

\begin{prop}\label{AAA} The following hold
 \begin{enumerate}
  \item An element $M$ of $M_n(C(\T))$ is invertible if and only if $det(M)$ is invertible in $C(\T)$
  \item If $M \in M_n(A(\T))$ is invertible in $M_n(C(\T))$ then it is invertible in $M_n(A(\T))$
  \item The set $GL_n(A(\T))$ of invertibles in $M_n(A(\T))$ is dense in $GL_n(C(\T))$
 \end{enumerate}

\end{prop}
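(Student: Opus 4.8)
The plan is to treat each item in turn, relying on standard facts about $C(\T)$ and on Wiener's Lemma to pass to $A(\T)$. For item (1): a matrix $M\in M_n(C(\T))$ is invertible in $M_n(C(\T))$ if and only if, for every $\theta$, the scalar matrix $M(e^{i\theta})$ is invertible in $M_n(\C)$ \emph{and} the resulting pointwise inverse is continuous. The first condition is exactly $\det M(e^{i\theta})\neq 0$ for all $\theta$, i.e.\ $\det(M)$ does not vanish on $\T$, i.e.\ $\det(M)$ is invertible in $C(\T)$. Continuity of the pointwise inverse is automatic from Cramer's rule (the entries of $M(e^{i\theta})^{-1}$ are polynomials in the entries of $M(e^{i\theta})$ divided by $\det M(e^{i\theta})$, hence continuous once the denominator is nonvanishing). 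So (1) reduces to this elementary observation; I would state it in one or two sentences.

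For item (2): assume $M\in M_n(A(\T))$ is invertible in $M_n(C(\T))$. By (1), $\det(M)$ is invertible in $C(\T)$; but $\det(M)$ is a polynomial expression (with integer coefficients) in the entries of $M$, each of which lies in the Banach algebra $A(\T)$, so $\det(M)\in A(\T)$. Wiener's Lemma then gives $\det(M)^{-1}\in A(\T)$. Now Cramer's rule expresses $M^{-1}$ as $\det(M)^{-1}$ times the adjugate matrix of $M$, whose entries are again polynomials in the entries of $M$ and hence lie in $A(\T)$. Therefore every entry of $M^{-1}$ lies in $A(\T)$, so $M^{-1}\in M_n(A(\T))$ and $M$ is invertible there.

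For item (3): given $M\in GL_n(C(\T))$, I want invertibles in $M_n(A(\T))$ arbitrarily close to it in the $M_n(C(\T))$ norm. Since trigonometric polynomials (finite Fourier series) are dense in $C(\T)$, and a fortiori their matrices $M_n$ of them form a dense subset of $M_n(C(\T))$ lying inside $M_n(A(\T))$, choose a matrix $N\in M_n(A(\T))$ with $\|M-N\|$ small. Then $M^{-1}N = I + M^{-1}(N-M)$, and for $\|M-N\|<\|M^{-1}\|^{-1}$ this is invertible in $M_n(C(\T))$ (Neumann series), hence $N$ is invertible in $M_n(C(\T))$; by item (2), $N$ is invertible in $M_n(A(\T))$. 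Letting $\|M-N\|\to 0$ gives the density. I expect no genuine obstacle here: the only mild subtlety is making sure the approximating matrices are taken inside $M_n(A(\T))$, which trigonometric-polynomial matrices certainly are, and that item (2) is available to upgrade invertibility in $C(\T)$ to invertibility in $A(\T)$. The three parts are thus essentially formal consequences of Wiener's Lemma plus Cramer's rule plus density of trigonometric polynomials, and the ``hard part'' is really just organizing them cleanly; no new machinery is needed.
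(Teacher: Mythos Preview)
Your proposal is correct and follows essentially the same route as the paper: item (1) via the cofactor/Cramer formula in the commutative algebra $C(\T)$, item (2) via Cramer's rule together with Wiener's Lemma to get $\det(M)^{-1}\in A(\T)$, and item (3) via density of $M_n(A(\T))$ in $M_n(C(\T))$ plus the fact that invertibles form an open set, upgrading to $GL_n(A(\T))$ using item (2). The paper's proof is terser but the content is identical.
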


\begin{proof} (a) follows from the cofactor formula in the abelian algebra $C(\T)$. (b) 
follows from the cofactor formula and the fact that $det(M)^{-1}$ belongs to
 $A(\T)$ by Wiener's Lemma. (c) since $A(\T)$ is dense in $C(\T)$, $M_n(A(\T))$ is dense in $M_n(C(\T))$. If $M$
 is invertible in $M_n(C(\T))$ then a close enough operator in $M_n(A(\T))$ will be invertible
 with respect to $M_n(C(\T))$ and therefore to $M_n(A(\T))$.
\end{proof}

\begin{lemma}\label{similarity} Two idempotents of $M_\infty(A(\T))$ which are similar in $M_\infty(C(\T))$ are similar in $M_\infty(A(\T))$.
\end{lemma}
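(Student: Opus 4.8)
The plan is to upgrade an equality of conjugates over $C(\T)$ to one over $A(\T)$ by a density argument based on Proposition \ref{AAA}, combined with the standard fact that two sufficiently close idempotents in a unital Banach algebra are similar via an explicit invertible element. First I would carry out the bookkeeping in $M_\infty$: by definition of similarity in $M_\infty(C(\T))$ there is an $N$ large enough that, after replacing $P$ and $Q$ by their images in $M_N$, both are idempotents of $M_N(A(\T))$ and there is $M \in GL_N(C(\T))$ with $P = M^{-1}QM$.

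\textbf{Density step.} Using Proposition \ref{AAA}(c), choose $M_0 \in GL_N(A(\T))$ with $\|M_0 - M\|$ as small as desired; by Proposition \ref{AAA}(b) (Wiener's Lemma) $M_0$ is genuinely invertible in $M_N(A(\T))$, and since inversion is continuous on the open set of invertibles of $M_N(C(\T))$, the norm $\|M_0^{-1} - M^{-1}\|$ is correspondingly small. Hence $P_0 := M_0^{-1}QM_0 \in M_N(A(\T))$ is an idempotent (being conjugate to $Q$) with $\|P_0 - P\|$ arbitrarily small.

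\textbf{Close-idempotents step and conclusion.} For the idempotents $P, P_0$ in the unital Banach algebra $M_N(A(\T))$, set $z := PP_0 + (I_N - P)(I_N - P_0)$. A direct computation gives $zP_0 = Pz$ and $z - I_N = (2P - I_N)(P_0 - P)$, so $\|z - I_N\| \le (2\|P\| + 1)\,\|P_0 - P\|$; choosing $\|P_0 - P\|$ small enough makes $z \in GL_N(A(\T))$, whence $P = zP_0z^{-1} = (M_0 z^{-1})^{-1}\,Q\,(M_0 z^{-1})$ with $M_0 z^{-1} \in GL_N(A(\T))$. This exhibits $P$ and $Q$ as similar in $M_N(A(\T)) \subseteq M_\infty(A(\T))$.

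\textbf{Main obstacle.} Everything here is routine once Proposition \ref{AAA} is available; the only point requiring care is the $M_\infty$ bookkeeping, namely checking that ``similar in $M_\infty(C(\T))$'' can be realized with both idempotents \emph{and} the conjugating invertible inside a single $M_N$, and that enlarging the matrix size (padding with zeros and identities) disturbs neither the idempotency nor the similarity. The close-idempotents formula and the approximation of $M$ by an invertible of $M_N(A(\T))$ are the standard ingredients.
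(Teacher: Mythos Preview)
Your argument is correct and matches the paper's proof: approximate the $C(\T)$-conjugator by some $M_0 \in GL_N(A(\T))$ via Proposition~\ref{AAA}(3), conjugate to obtain an idempotent close to the target, and then invoke the standard close-idempotents-are-similar formula (the paper uses the equivalent variant $U=I-Q(Q'-Q)+(Q-Q')Q$ in place of your $z=PP_0+(I-P)(I-P_0)$). The only subtlety worth making explicit is that $\|P_0-P\|$ is controlled only in the $C(\T)$ norm, so $z$ is first seen to be invertible in $M_N(C(\T))$ and then, since $z\in M_N(A(\T))$, in $GL_N(A(\T))$ by Proposition~\ref{AAA}(2); the paper handles this point identically.
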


\begin{proof} Let $P$ and $Q$ be such idempotents, and let $M$ be invertible in some  $GL_N(C(\T))$ such that
$Q=MPM^{-1}$. By Proposition \ref{AAA} (3) we may find a perturbation $M'$ of $M$ belonging to $GL_N(A(\T))$. Then
$Q'=M'P{M'}^{-1}$ is an idempotent of $M_N(A(\T))$ which is similar to $P$ in $M_N(A(\T))$, but also to $Q$ if $M'$
was chosen close enough to $M$.
Indeed it is a classical and immediate computation (valid in any Banach algebra) that $Q$ and $Q'$ are similar through the invertible $U=I-Q(Q'-Q)+(Q-Q')Q$
as soon as $Q'$ is
close enough to $Q$ in $M_N(C(\T))$ (see e.g. \cite{maurey} Lemma 9.2). Since $Q$ and $Q'$ belong to the algebra $M_N(A(\T))$,
$U$ is an invertible of $M_N(A(\T))$.
\end{proof}

\subsection{Complemented subspaces in powers of $X$}

\

Recall that $X$ is either $X_S(\C)$ or $X_S(\R)_\C$.
We now prove several results indicating how the rigidity properties of $X$ proved in \cite{GM2} carry over to its powers $X^n$.
As a first result and for clarity let us quickly repeat the ideas of \cite{GM2} to show that $X_S(\R)_\C$ also satisfies the
equivalence of Proposition \ref{GM}.

\begin{prop}\label{GMbis} The following are equivalent for an infinite dimensional subspace $Y$ of $X$
\begin{enumerate}
\item $Y$ is isomorphic to $X$
\item $Y$ is complemented in $X$
\item $Y$ is finite codimensional in $X$
\end{enumerate}
\end{prop}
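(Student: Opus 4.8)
The plan is to follow the strategy of \cite{GM2} for $X_S(\C)$ and check that every step survives the passage to the complexification $X_S(\R)_\C$, the only place where the difference matters being that $\Psi$ is now merely an isomorphism rather than an isometric one. The implications $(3)\Rightarrow(2)$ and $(2)\Rightarrow(1)$ are the easy directions: a finite codimensional subspace is always complemented in a space with a basis (and $X$ is isomorphic to its hyperplanes because the shift $S$ is a power-bounded isomorphic embedding onto a hyperplane, so $X$ is isomorphic to each of its finite codimensional subspaces), so $(3)\Rightarrow(2)\Rightarrow(1)$ will come down to the basic bookkeeping with $S$. The content is in $(1)\Rightarrow(3)$: an infinite dimensional subspace isomorphic to $X$ must be finite codimensional.

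For $(1)\Rightarrow(3)$ I would argue as follows. Suppose $Y\subseteq X$ is isomorphic to $X$; let $U\colon X\to Y\subseteq X$ be an isomorphism onto $Y$. I want to show $U$ is finitely singular (upper semi-Fredholm), equivalently that $U$ is not infinitely singular; then, $U$ being also injective, its image is finite codimensional. The key tool is the map $\Phi\colon L(X)\to\algebra$ with $T-\Phi(T)$ strictly singular, together with $\Psi\colon \algebra\to A(\T)$, which is an isomorphism of Banach algebras in both the $X_S(\C)$ and the $X_S(\R)_\C$ cases. Writing $f=\Psi(\Phi(U))\in A(\T)$, I consider two cases. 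If $f$ vanishes at some point $e^{i\theta_0}\in\T$, then one builds, exactly as in the computation already done in Section 4 for $\mathrm{Id}-tS$, a normalized block sequence (using partial sums $\sum_{n=1}^N e^{ni\theta_0}e_n$, whose norm grows like $N/\log_2(N+1)$ by \cite{GM2} Theorem 5, while $\Phi(U)$ sends it close to a bounded vector) on which $\Phi(U)$, hence $U$, is arbitrarily small in norm; since a strictly singular perturbation does not change this, $U$ would be infinitely singular, contradicting that $U$ is an isomorphism onto $Y$. Hence $f$ does not vanish on $\T$, so by Wiener's Lemma $f$ is invertible in $A(\T)$, so $\Phi(U)$ is invertible in $\algebra$, say with inverse $\Phi(V)$ for some $V\in L(X)$; then $VU-\mathrm{Id}_X=V U-\Phi(V)\Phi(U)$ is strictly singular, hence inessential, so $VU$ is Fredholm, hence $U$ is finitely singular (upper semi-Fredholm). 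Being injective with closed finite-codimensional range, $Y=UX$ is finite codimensional in $X$, giving $(3)$.

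The main obstacle, and the only real novelty compared to \cite{GM2}, is making sure that replacing the isometric identification $\algebra\cong A(\T)$ by a mere isomorphism does nothing harmful: the constant in $\|\Psi(T)\|\asymp\|\Phi(T)\|$ depends on the chosen equivalent norm on $X_S(\R)_\C$, but all the arguments above use only the algebra structure and Wiener's Lemma (invertibility in $C(\T)$, equivalently non-vanishing, implies invertibility in $A(\T)$), which are invariant under Banach-algebra isomorphism; and the infinite-singularity construction uses only \cite{GM2} Theorem 5, which holds in the real case and hence in its complexification. The other point to be slightly careful about is that $\Phi$ is a projection and algebra homomorphism on $L(X_S(\R)_\C)$ with strictly singular kernel, which was established in Section 4 via the fact that operators on $X_S(\R)_\C$ are of the form $A+iB$ with $A,B$ real operators on $X_S(\R)$, each equal modulo strictly singular to an element of $\ell_1(\Z,\R)$ acting by shifts; I would simply cite that discussion. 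With these observations in place the proof is a routine transcription of the Gowers--Maurey argument.
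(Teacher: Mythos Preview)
Your argument for $(1)\Rightarrow(3)$ is essentially the paper's (which packages it as the $n=1$ case of Proposition~\ref{abc}): either $\Psi(\Phi(U))$ vanishes somewhere on $\T$, forcing $U$ to be infinitely singular, or it does not, and Wiener's Lemma makes $\Phi(U)$ invertible in $\algebra$. One small slip: from $VU$ Fredholm you deduce only that $U$ is upper semi-Fredholm, and then assert $U$ has finite-codimensional range; upper semi-Fredholm by itself does not give that (think of $e_n\mapsto e_{2n}$ on $\ell_2$). You need the symmetric observation that $UV-\mathrm{Id}_X$ is also strictly singular (same reason, since $\Phi(U)\Phi(V)=\mathrm{Id}$), so that $U$ is genuinely Fredholm.

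The real gap is $(2)\Rightarrow(1)$ (equivalently $(2)\Rightarrow(3)$), which you have not proved. Your parenthetical about the shift shows that a \emph{finite-codimensional} subspace is isomorphic to $X$; that is $(3)\Rightarrow(1)$, not $(2)\Rightarrow(1)$. Nothing you wrote excludes an infinite-dimensional, infinite-codimensional complemented subspace of $X$, so the chain ``$(3)\Rightarrow(2)\Rightarrow(1)$'' is broken at the second arrow. The paper closes this with a separate, short argument: if $P$ is a projection on $X$ then $\Psi(P)$ is an idempotent in the commutative algebra $A(\T)$, hence identically $0$ or $1$; thus $\Phi(P)\in\{0,I_X\}$, so either $P$ or $I_X-P$ is a strictly singular projection and therefore has finite rank. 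Without this idempotent argument (or an equivalent one) the equivalence is incomplete.
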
 

\begin{proof} (3) $\Rightarrow$ (2) is trivial, and (3) $\Rightarrow$ (1) is due to the existence of the shift operator $S$.
 (1) $\Rightarrow$ (3): if there is an embedding of $X$ into $X$, it is not infinitely singular,
 and it follows that it must be Fredholm. This can be seen as a consequence of a more general result, Proposition
 \ref{abc} (2)(3), whose proof follows below. (2) $\Rightarrow$ (3): If $P$ is a projection on $X$
 then $\Psi(P)$ is an idempotent in $A(\T)$, therefore it is either constantly $0$ or $1$,
 meaning that $\Phi(P)$ is either $I_X$ or $0$. Then either $P$ or $Id-P$ is a strictly singular
 projection
 and therefore has finite rank. So $Y=PX$ has finite codimension.
\end{proof}

\begin{lemma}\label{weird} Let $T \in L(X^m,X^n)$, for $m, n \in \N$. If  $(\alpha_i)_{i=1,\ldots,m} \in \C^n$ belongs to $Ker(\Psi(T)(t))$ for some $t \in \T$,
then the restriction of $T$ to the subspace
$\{(\alpha_1 x,\ldots, \alpha_m x), x \in X\}$ of $X^m$ is infinitely singular.
\end{lemma}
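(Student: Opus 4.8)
The plan is to transport the restriction of $T$ to the subspace $D:=\{(\alpha_1 x,\dots,\alpha_m x):x\in X\}$ onto an explicit operator on $X$ and then exploit the vanishing of the coordinate functions of $\Psi(T)\alpha$ at $t$. We may assume $\alpha\neq 0$, so that $\iota: x\mapsto(\alpha_1x,\dots,\alpha_m x)$ is an isomorphism of $X$ onto $D$. Writing $T=(T_{ij})_{i,j}$, the operator $T-\Phi(T)\in L(X^m,X^n)$ has strictly singular entries $T_{ij}-\Phi(T_{ij})$, hence (as $SS$ is an operator ideal) is itself strictly singular, so
$$T|_D\circ\iota \;=\; \Phi(T)|_D\circ\iota + R, \qquad R\in L(X,X^n)\ \text{strictly singular},$$
and $\Phi(T)|_D\circ\iota$ is the operator $\Theta: X\to X^n$, $\Theta x=(f_1(S)x,\dots,f_n(S)x)$, where $f_j:=\sum_{i=1}^m\alpha_i\,\Psi(T_{ij})\in A(\T)$. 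The hypothesis that $\alpha$ lies in $\ker\Psi(T)(t)$ says exactly that $f_j(t)=0$ for every $j$. Since composition with the isomorphism $\iota$, and (by the classical stability of upper semi-Fredholm operators under strictly singular perturbations) addition of $R$, both preserve the class of infinitely singular operators, it suffices to show that $\Theta$ is infinitely singular.

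For this, recall that for $t\in\T$ the operator $I-tS$ is injective with non-closed range, hence not upper semi-Fredholm, hence infinitely singular; likewise $I-\bar tS$ since $\bar t\in\T$. The key idea is that each $f_j$, vanishing at $t$, is ``almost divisible'' by $z-t$. Precisely, for every $\delta>0$ there is a Laurent polynomial $p_j$ with $p_j(t)=0$ and $\|f_j-p_j\|_{A(\T)}<\delta$: approximate $f_j$ within $\delta/2$ by a Laurent polynomial $\tilde p_j$ (Laurent polynomials being dense in $A(\T)$), then subtract the constant $\tilde p_j(t)$, whose $A(\T)$-norm equals $|\tilde p_j(t)|=|\tilde p_j(t)-f_j(t)|\le\delta/2$. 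Dividing the polynomial $p_j$ by $z-t$ gives a Laurent polynomial $q_j$ with $p_j(z)=(z-t)q_j(z)$. Applying the continuous map $g\mapsto g(S):A(\T)\to\algebra\subseteq L(X)$, which is an algebra homomorphism modulo strictly singular operators (on Laurent polynomials, modulo finite rank ones), and using $S-tI=-t(I-\bar tS)$ together with the commutativity of $\algebra$, one obtains
$$f_j(S)\;=\;-t\,q_j(S)(I-\bar tS)\;+\;(f_j-p_j)(S)\;+\;K_j,$$
where $\|(f_j-p_j)(S)\|\le C\delta$ with $C$ the norm of $g\mapsto g(S)$, and $K_j\in L(X)$ is strictly singular; it collects both the defect of $g\mapsto g(S)$ from being an exact homomorphism in $L(X)$ and the commutator of $q_j(S)$ with $I-\bar tS$, which is a finite sum of finite rank operators.

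Now fix $\epsilon>0$. Choosing $\delta$ with $C\delta<\epsilon/3$ fixes the $p_j,q_j$ and the constant $Q:=\max_j\|q_j(S)\|$. Since $I-\bar tS$ is infinitely singular we may pick an infinite dimensional subspace $W\subseteq X$ with $\|(I-\bar tS)|_W\|<\epsilon/(3Q)$ (the case $Q=0$ being trivial). Since $x\mapsto(K_1x,\dots,K_nx)$ is a strictly singular operator $X\to X^n$, we may then pass to an infinite dimensional $Z\subseteq W$ on which it has norm $<\epsilon/3$ (a strictly singular operator being of arbitrarily small norm on some infinite dimensional subspace of any given infinite dimensional subspace). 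Then for $z\in Z$ and each $j$,
$$\|f_j(S)z\|\;\le\;\|q_j(S)\|\,\|(I-\bar tS)z\| + \|(f_j-p_j)(S)z\| + \|K_jz\|\;<\;\Big(\tfrac{\epsilon}{3}+\tfrac{\epsilon}{3}+\tfrac{\epsilon}{3}\Big)\|z\| = \epsilon\|z\|,$$
so $\|\Theta|_Z\|\le\epsilon$. As $\epsilon$ was arbitrary, $\Theta$, and therefore $T|_D$, is infinitely singular.

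The step demanding the most care is the third one: the identity $p_j(S)=-t\,q_j(S)(I-\bar tS)$ and the commutativity underlying it hold exactly in $\algebra$ — which is commutative and on which $\Psi$ is an isomorphism onto $A(\T)$ — but only modulo strictly singular operators inside $L(X)$ itself (already because $SL\neq I$); one must carry all such discrepancies along and, together with the norm-$C\delta$ approximation error, absorb them at the very end by passing to a subspace. The remaining ingredients — density of Laurent polynomials in $A(\T)$, infinite singularity of $I-\bar tS$ (from the non-closed-range observation recalled above), continuity and approximate multiplicativity of $g\mapsto g(S)$, and the subspace behaviour of strictly singular operators — are standard.
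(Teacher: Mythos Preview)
Your argument is correct, and it takes a genuinely different route from the paper's own proof. The paper proceeds by direct computation: it writes down the explicit approximate $t$-eigenvectors
\[
x_N=\frac{\log_2(1+N^2)}{N^2}\sum_{j=N^2}^{2N^2} t^{-j}e_j,
\]
truncates $\Phi(T)$ to $U=\sum_{|k|\le N}A_kS^k$, and shows by an index-by-index telescoping (using $\sum_k t^kA_k\alpha=0$) that $U(\alpha_1x_N,\dots,\alpha_mx_N)$ has norm $O(\log N/N)$. Your proof instead reduces the multidimensional statement to the scalar one: once you have rewritten $\Phi(T)\iota$ as $(f_1(S),\dots,f_n(S))$ with each $f_j(t)=0$, you factor an approximating Laurent polynomial as $(z-t)q_j(z)$ and push everything through the single infinitely singular operator $I-\bar tS$, absorbing the finite-rank defects of the functional calculus at the end. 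This is cleaner and more structural --- it isolates the one genuinely ``Gowers--Maurey'' input (infinite singularity of $I-\bar tS$) and treats the rest by soft functional-calculus arguments --- whereas the paper's version is more self-contained, producing the bad vectors explicitly without any appeal to a previously established scalar case.

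One small point worth tightening: you invoke the non-closed range of $I-\bar tS$ ``recalled above'', but that earlier passage is stated for $X_S$, while the lemma concerns $X$, which may be $(X_S(\R))_\C$ with a genuinely complex $t$. The fact is still true there (and the paper itself uses it, via the same $x_N$ vectors and \cite{GM2} Theorem~5, in its own proof of the lemma), but you should say a word to that effect rather than leave the reader to bridge Sections~4 and~5.
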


\begin{proof} This is a multidimensional version of Lemma 14 from \cite{GM2}. 
Recall that $\Psi(T)(t)=\sum_k A_k t^k$, where the $A_k$ are $(m,n)$ scalar matrices,
and $\Phi(T)=\sum_k A_k S^k$. 
We consider $$x_N:=x_N(t)=\frac{\log_2(1+N^2)}{N^2} \sum_{j=N^2}^{2N^2} t^{-j} e_j \in X,$$
which has norm at least $1$ by \cite{GM2} Theorem 5,
and prove that if $\alpha:=\begin{pmatrix} \alpha_1 \\ \ldots \\ \alpha_m \end{pmatrix} \in ker(\sum_k A_k t^k)$, i.e.
$\sum_k t^k A_k \alpha=0$, then
$$\Phi(T)(\alpha_1 x_N, \ldots, \alpha_m x_N)$$ is arbitrarily small. This will imply
that $\Phi(T)$ is infinitely singular and therefore $T$ as well, on the required subspace.

Take $N$ large enough
so that $\|\Phi(T)-U\|$ is less than some given $\epsilon>0$, with $U=\sum_{k=-N}^N A_k S^k$.
Then
$$\frac{N^2}{\log_2(1+N^2)} U(\alpha_1 x_N, \ldots, \alpha_m x_N)=\sum_{k=-N}^N \sum_{j=N^2}^{2N^2} t^j A_k S^{-k} (\alpha e_j) $$
$$=\sum_k \sum_{j=N^2}^{2N^2} t^j A_k  (\alpha e_{j+k})=\sum_{l=N^2-N}^{2N^2+N} t^l \sum_{l-2N^2 \leq k \leq l-N^2, -N \leq k \leq N} t^{k} A_k  (\alpha e_{l}))
$$
The sum inside is not zero only if for those $l$ such that $l-N^2 < N$ or $-N < l-2N^2$, therefore for at most $4N$ values of $l$,
and is uniformly bounded by absolute convergence of $\sum_k A_k$. Therefore $U(\alpha. x_N)$ is controled in norm by a multiple of $\log(N)/N$.
This concludes the proof.
\end{proof}

\begin{prop} \label{abc}
The following hold for $n \in \N$:
\begin{itemize}
\item[(1)]
Let $T \in L(X,X^n)$, written in blocks as
$T=\begin{pmatrix} T_1 \\ \vdots \\ T_n \end{pmatrix}.$
If  $\sum_i |\Psi(T_i)|^2$  vanishes on $\T$, then $T$ is infinitely singular. 
\item[(2)]
Let $T \in L(X^n)$. If $det(\Psi(T))$ vanishes on $\T$ then $T$ is infinitely singular.
\item[(3)] Let $T \in L(X^n)$. If 
$det(\Psi(T))$ does not vanish on $\T$ then $T$ is Fredholm.
\end{itemize}
\end{prop}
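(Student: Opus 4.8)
The plan is to treat the three statements essentially in parallel, using Lemma \ref{weird} for the singularity assertions (1) and (2), and a parametrix/perturbation argument for the Fredholmness assertion (3). First I would prove (1). Suppose $\sum_i |\Psi(T_i)|^2$ vanishes identically on $\T$; then for every $t \in \T$ we have $\Psi(T_i)(t)=0$ for all $i$, so in particular, picking any $t_0\in\T$, the scalar $1 \in \C$ (viewed as the $m=1$ vector) lies in $\ker(\Psi(T)(t_0))$ since $\Psi(T)(t_0)$ is the zero column. By Lemma \ref{weird} (with $m=1$, $\alpha_1=1$), the restriction of $T$ to $\{(x); x \in X\}=X$ is infinitely singular; but that is all of the domain $X$, so $T$ itself is infinitely singular. (More robustly: since $\Phi(T)$ has all matrix coefficients $A_k$ equal to zero once $\Psi(T_i)\equiv 0$ for all $i$, $T$ differs from $0$ by a strictly singular operator and hence is infinitely singular.)

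For (2), assume $\det(\Psi(T))$ vanishes at some point $t \in \T$ — in fact the hypothesis says it vanishes on all of $\T$, but a single zero suffices. Then $\Psi(T)(t) \in M_n(\C)$ is a singular matrix, so there is a nonzero $\alpha = (\alpha_1,\ldots,\alpha_n) \in \C^n$ with $\Psi(T)(t)\alpha = 0$, i.e. $\alpha \in \ker(\Psi(T)(t))$. By Lemma \ref{weird} (with $m=n$), the restriction of $T$ to the infinite-dimensional subspace $\{(\alpha_1 x,\ldots,\alpha_n x): x\in X\}$ of $X^n$ is infinitely singular. Since being infinitely singular is equivalent to: for every $\epsilon>0$ there is an infinite-dimensional subspace on which the norm of $T$ is at most $\epsilon$, and such subspaces can be found inside the one just produced, $T$ is infinitely singular on $X^n$.

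For (3), assume $\det(\Psi(T))$ does not vanish on $\T$. Then $\Psi(T)$ is invertible in $M_n(C(\T))$ by Proposition \ref{AAA}(a), hence in $M_n(A(\T))$ by Proposition \ref{AAA}(b); write $B = \Psi(T)^{-1} \in M_n(A(\T))$ and let $\Phi^{-1}(B) \in M_n(\algebra)$ be the corresponding operator on $X^n$, obtained via the algebra isomorphism between $\algebra$ and $A(\T)$ induced by $\Psi$. Since $\Phi$ is an algebra homomorphism onto $M_n(\algebra)$ and $\Psi$ is an algebra isomorphism (up to an equivalent norm in the $(X_S(\R))_\C$ case), $\Phi^{-1}(B)\,\Phi(T) = \Phi(T)\,\Phi^{-1}(B) = I_{X^n}$ in $M_n(\algebra)$. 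Now $T - \Phi(T)$ has strictly singular entries, hence $T - \Phi(T)$ is strictly singular as an operator on $X^n$; therefore $\Phi^{-1}(B)\,T = I_{X^n} + (\text{strictly singular})$ and $T\,\Phi^{-1}(B) = I_{X^n} + (\text{strictly singular})$. Since a strictly singular perturbation of the identity is Fredholm, both $\Phi^{-1}(B)\,T$ and $T\,\Phi^{-1}(B)$ are Fredholm, and it follows by the standard two-sided-parametrix argument that $T$ is Fredholm.

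The main obstacle is the bookkeeping in (3) around the fact that in the case $X = (X_S(\R))_\C$ the map $\Psi$ is only an isomorphism, not an isometry, onto $A(\T)$; one must check that this does not affect invertibility statements — it does not, since invertibility in a Banach algebra is unaffected by passing to an equivalent algebra norm — and that the matrix-amplified map $\Phi\colon L(X^m,X^n) \to M_{m,n}(\algebra)$ still kills the "remainder" $T-\Phi(T)$ in the strictly singular ideal, which is immediate from applying the scalar statement entrywise together with the fact that a matrix of strictly singular operators is strictly singular. The singularity parts (1) and (2) are routine given Lemma \ref{weird}.
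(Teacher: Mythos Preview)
Your approach is essentially identical to the paper's: (1) and (2) via Lemma \ref{weird}, and (3) by lifting the inverse of $\Psi(T)$ through Proposition \ref{AAA} to get a two-sided parametrix modulo strictly singular operators. One small correction: in (1) the hypothesis ``vanishes on $\T$'' means vanishes at \emph{some} point of $\T$ (compare (2) and (3), which together form a dichotomy, and the use of (1) in Lemma \ref{projection}); you read it as ``vanishes identically,'' but your own argument via Lemma \ref{weird} already handles the single-zero case, so the fix is purely cosmetic.
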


\begin{proof} (1) and (2) follow from Lemma \ref{weird}. (3) If $det(\Psi(T)(t)) \neq 0$ for all $t \in \T$ then $\Psi(T)$ is invertible
in $M_n(A(\T))$ by Proposition \ref{AAA} (1)(2). Let $U$ be an operator such that $\Psi(U)=\Psi(T)^{-1}$. 
 From $\Psi(TU)=Id$ and $\Psi(UT)=Id$ we deduce $TU-Id$ and $UT-Id$ are strictly singular, and therefore $T$ is Fredholm.
\end{proof}

\begin{cor}\label{primen} Operators on $X^n$ are either Fredholm or infinitely singular.
In particular the space $X^n$ is not isomorphic to its subspaces of infinite codimension.
\end{cor}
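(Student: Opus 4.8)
The plan is to prove the two assertions of Corollary \ref{primen} as essentially immediate consequences of Proposition \ref{abc}. For the first assertion, let $T \in L(X^n)$ be arbitrary. We pass to $\Psi(T) \in M_n(A(\T))$ and consider the continuous function $t \mapsto \det(\Psi(T)(t))$ on $\T$. There are exactly two possibilities: either this function vanishes somewhere on $\T$, in which case Proposition \ref{abc}(2) tells us that $T$ is infinitely singular; or it never vanishes on $\T$, in which case Proposition \ref{abc}(3) tells us that $T$ is Fredholm. These two cases are exhaustive and (since a Fredholm operator cannot be infinitely singular, being finitely singular by definition) mutually exclusive, which gives the dichotomy.

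For the second assertion, suppose toward a contradiction that $X^n$ were isomorphic to a subspace $Z$ of itself with $Z$ of infinite codimension. Let $T \in L(X^n)$ be the composition of such an isomorphism with the inclusion of $Z$ into $X^n$, so that $T$ is an isomorphic embedding of $X^n$ into $X^n$ whose image $Z$ has infinite codimension. On the one hand, being an isomorphic embedding, $T$ restricts to an isomorphism on every subspace, so $T$ is not infinitely singular. By the dichotomy just established, $T$ must therefore be Fredholm; in particular its image has finite codimension. This contradicts the fact that $Z = T(X^n)$ has infinite codimension, and the contradiction completes the proof.

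I do not expect any genuine obstacle here: the corollary is a packaging of Proposition \ref{abc}, and the only points requiring a word of care are (i) noting that the determinant function is continuous on the compact set $\T$, so the two alternatives ``vanishes somewhere'' and ``vanishes nowhere'' exhaust all cases, and (ii) recalling from the background section that an isomorphic embedding is finitely singular and hence in particular not infinitely singular, so that the Fredholm alternative is forced. If one wants to be slightly more precise about the kernel and cokernel dimensions in the first part, one can also observe, as the remark after Proposition \ref{mainprop} announces, that the Fredholm operators arising this way have image a finite-codimensional — hence, by Proposition \ref{GMbis}, isomorphic to $X$ in each coordinate block — subspace; but this refinement is not needed for the statement of Corollary \ref{primen} itself.
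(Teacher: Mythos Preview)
Your proof is correct and follows exactly the approach the paper intends: the corollary is stated without proof in the paper precisely because the dichotomy is an immediate consequence of parts (2) and (3) of Proposition \ref{abc}, and the ``in particular'' clause follows since an isomorphic embedding is finitely singular and hence must fall in the Fredholm alternative. Your write-up makes explicit exactly the steps the paper leaves to the reader.
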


\

In the  next proposition we shall use the fundamental fact that the monoid of similarity classes of idempotents in $M_\infty(C(\T))$ is $\N$,
or equivalently, that the rank (i.e. for $A \in M_\infty(C(\T))$ the common rank of all matrices $A(t)$ for $t \in \T$),
is the only associated similarity invariant.  This is a consequence of the essential fact that $K_1(\C):=K_0(C_0(\T))$
identifies with the set of homotopy classes
of invertibles in $GL_n(\C)$ and therefore is $\{0\}$ by connexity of $GL_n(\C)$
(here $C_0(\T)$ denotes elements of $C(\T)$ which vanish in $0$), see for example \cite{blackadar} Theorem 8.22,
which also reformulates as the $K_0$-group of $C(\T)$ being equal to $\Z$, 
see for example \cite{blackadar}, Example 5.3.2 (c), or \cite{maurey}, Example 1 p49 or Examples
9.4.1.

\begin{prop}\label{multi} Let $n \in \N$. A complemented subspace of $X^n$ is isomorphic to $X^m$ for some $m \leq n$.
\end{prop}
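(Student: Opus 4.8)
The plan is to translate the statement about complemented subspaces of $X^n$ into a statement about idempotents in $M_\infty(\algebra)$, pass through $\Psi$ to the Wiener algebra, and then exploit the triviality of $K_1(\C)$ together with the lifting result of Lemma \ref{similarity}. So let $P \in M_k(L(X^n)) \subseteq M_\infty(L(X^n))$ be an idempotent whose range is (isomorphic to) the given complemented subspace $Z$ of $X^n$; absorbing $X^n$ into a larger power, we may as well think of $P$ as an idempotent of $L(X^N)$ for $N=kn$, with $Z \simeq PX^N$. The first step is to replace $P$ by $\Phi(P)$: since $P-\Phi(P)$ is strictly singular (componentwise, by the defining property of $\Phi$ extended to matrices), $\Phi(P)$ is a ``quasi-idempotent'' modulo a strictly singular operator, and a standard Fredholm/spectral argument (the spectrum of $\Phi(P)$ modulo strictly singular operators is contained in $\{0,1\}$, so one can form the Riesz idempotent associated to a small circle around $1$) produces a genuine idempotent $Q \in M_N(\algebra)$ with $Q - P$ strictly singular, hence $QX^N$ Fredholm-equivalent, and in particular isomorphic, to $PX^N = Z$.

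Next I would push $Q$ through $\Psi$ to get an idempotent $\Psi(Q) \in M_N(A(\T))$. Its rank is some constant $m \in \{0,1,\dots,N\}$ as $t$ ranges over $\T$ (the rank of a continuous family of idempotents over a connected space is constant). The key input is that the monoid of similarity classes of idempotents in $M_\infty(C(\T))$ is just $\N$ via the rank — this is exactly the reformulation of $K_1(\C)=0$ / $K_0(C(\T))=\Z$ cited just before the statement. Hence $\Psi(Q)$ is similar in $M_\infty(C(\T))$ to the diagonal idempotent $E_m:=\mathrm{diag}(1,\dots,1,0,\dots)$ (with $m$ ones), and by Lemma \ref{similarity} this similarity can be realized inside $M_\infty(A(\T))$. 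Lifting the invertible matrix implementing the similarity from $A(\T)$ back to $\algebra$ via $\Psi^{-1}$ (recall $\Psi$ is an algebra isomorphism of $\algebra$ onto $A(\T)$, or at least onto its image with bounded inverse, which is all that is needed to transport invertibility and idempotency), we conclude that $Q$ is similar in $M_\infty(\algebra)\subseteq M_\infty(L(X))$ to the idempotent $E_m$ of $M_\infty(L(X))$ whose range is visibly $X^m$. Since similar idempotents in $M_\infty(L(X))$ have isomorphic ranges, $Z \simeq QX^N \simeq E_m X^{(\cdot)} = X^m$.

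It remains to see $m \le n$. If $m > n$ then $X^m$, hence $X^n$, would contain a complemented copy of $X^{n+1}$; composing the inclusion $X^n \hookrightarrow X^n$ realizing $X^{n+1}$ complementably with a projection would give an operator $X^n \to X^n$ that is an isomorphism onto an infinite-codimensional complemented subspace, contradicting Corollary \ref{primen} (operators on $X^n$ are Fredholm or infinitely singular, and $X^n$ is not isomorphic to its infinite-codimensional subspaces). So $m \le n$, as claimed.

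The main obstacle I anticipate is the first step: producing an honest idempotent $Q \in M_N(\algebra)$ from $P$. One must know that $\algebra$ (equivalently $A(\T)$) is closed under the holomorphic functional calculus needed to form the Riesz projection around $1$ — this is where Wiener's Lemma (and its matrix version, Proposition \ref{AAA}) does the work, guaranteeing that inverting $(\lambda - \Phi(P))$ for $\lambda$ off the spectrum stays inside $\algebra$ — and one must check that the resulting idempotent differs from $P$ by a strictly singular operator, so that the ranges are genuinely isomorphic and not merely ``isomorphic modulo something.'' Once that bookkeeping is done, the $K$-theoretic core is short. The other routine-but-necessary check is that $\Psi$ transports invertibility in both directions between $M_\infty(\algebra)$ and $M_\infty(A(\T))$, which is immediate from $\Psi$ being a Banach-algebra isomorphism onto its range with the two-sided estimate coming from \cite{GM2} Lemma 11.
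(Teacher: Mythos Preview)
Your overall route is the paper's: reduce to an idempotent in $M_N(\algebra)$, push through $\Psi$ to $M_N(A(\T))$, use $K_0(C(\T))=\Z$ to get similarity to $I_m$, upgrade this to $A(\T)$ via Lemma~\ref{similarity}, lift back through $\Psi^{-1}$, and bound $m$ via Corollary~\ref{primen}. Two points, however, deserve correction.

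First, the ``main obstacle'' you anticipate is not there. The map $\Phi$ is an \emph{algebra homomorphism} (this is part of its definition in the setup), so $\Phi(P)^2=\Phi(P^2)=\Phi(P)$: the element $\Phi(P)$ is already a genuine idempotent in $M_N(\algebra)$, not merely a quasi-idempotent modulo strictly singular operators. No Riesz projection or holomorphic functional calculus is required. The paper simply sets $Q:=\Phi(P)$.

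Second, your passage ``$Q-P$ strictly singular, hence $QX^N$ Fredholm-equivalent, and in particular isomorphic, to $PX^N$'' is too quick. Two infinite-dimensional Banach spaces that differ by finite-dimensional pieces are \emph{not} isomorphic in general. The paper closes this gap in two steps: a lemma from \cite{maurey} (p.~49) says that a strictly singular perturbation of a projection has range differing from the original range only by finite-dimensional pieces, and then one uses that $X^m$ is isomorphic to all of its finite-dimensional perturbations because the shift $S$ realises $X\simeq$ its hyperplanes. You should make both ingredients explicit.

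A minor simplification: there is no need to pass to $M_k(L(X^n))$ and set $N=kn$; a complemented subspace of $X^n$ is already the range of some projection $P\in L(X^n)=M_n(L(X))$, so one may work with $N=n$ from the start.
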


\begin{proof} Let $P$ be a projection defined on $X^n$ and note that $\Phi(P)$ is also a projection,
which is a strictly singular perturbation of $P$. According to the Lemma on p49 of  \cite{maurey},
the map $P$ is therefore similar to a projection onto either some finite codimensional subspace
of $\Phi(P)X^n$, or $\Phi(P)X^n \oplus E$ where $E$ is finite dimensional.
Therefore $PX$ is a finite dimensional perturbation of $\Phi(P)X^n$ and since $X^m$ is isomorphic to its finite dimensional perturbations,
it is enough to prove the assertion for $\Phi(P)$. In other words we may assume that $P \in M_n(\algebra)$.

The image of $P$ through $\Psi$ is an idempotent of $M_n(A(\T))$ and in particular of $M_n(C(\T))$.
By the fact before the proposition,  $\Psi(P)$ 
is similar inside  $M_\infty(C(\T))$ to one of the canonical projections $I_m$ (i.e. the identity of $M_m(C(\T))$).
According to Lemma \ref{similarity}, it follows that $\Psi(P)$ is similar to $I_m$ inside $M_\infty(A(\T))$, i.e. 
$$\Psi(P)=M I_m M^{-1}$$ for some invertible $M$ in $M_N(A(\T))$ of appropriate dimension, and therefore the relation lifts to
$$P=U Id_{X^m} U^{-1}$$ for some invertible $U$ in $GL_N(\algebra)$ (seeing also $P$ and $Id_{X^m}$ as operators on $X^N$ in
the canonical way). It follows that $PX^n$ is isomorphic to $X^m$.
Finally $m \leq n$ as a consequence of Corollary \ref{primen}.\end{proof}

\

The proof of the above proposition implies the more technical result which follows:

\begin{lemma}\label{tecnico} If $P \in M_n(\algebra)$ is a projection on $X^n$ such that $PX$ is isomorphic to $X$, then
 there exist operators $U_1,\ldots,U_n, V_1,\ldots, V_n$ in $\algebra$ such that 
 $$PX^n=\{ (U_1 x, \ldots, U_n x), x \in X\}$$
 and such that $$U_1 V_1 + \cdots + U_n V_n=Id_X.$$
\end{lemma}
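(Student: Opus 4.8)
The plan is to unwind the proof of Proposition \ref{multi} in the special case $m=1$ and keep track of the explicit form of the conjugating invertible. By that proof, since $PX^n$ is isomorphic to $X$, the idempotent $\Psi(P) \in M_n(A(\T))$ has rank $1$ at every point of $\T$, hence is similar in $M_\infty(C(\T))$, and therefore (by Lemma \ref{similarity}) in $M_\infty(A(\T))$, to the canonical rank-one idempotent $I_1 = \mathrm{diag}(1,0,\ldots,0)$. So there are $N \geq n$ and an invertible $M \in GL_N(A(\T))$ with $\Psi(P) = M I_1 M^{-1}$ (viewing $P$ inside $M_N$ by padding with zeros), and this lifts through $\Psi$ to an invertible $U \in GL_N(\algebra)$ with $P = U\, Id_X\, U^{-1}$ as operators on $X^N$, where $Id_X$ here means the idempotent $I_1 = \mathrm{diag}(Id_X,0,\ldots,0)$ on $X^N$.

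Next I would extract the matrix data. Write $U = (U_{ij})_{1\le i,j\le N}$ and $U^{-1} = (W_{ij})_{1\le i,j\le N}$ with entries in $\algebra$. From $P = U I_1 U^{-1}$, the $(i,j)$ entry of $P$ is $U_{i1} W_{1j}$. Set $U_i := U_{i1}$ and $V_j := W_{1j}$ for $1 \le i,j \le n$; these lie in $\algebra$. Since $P$ is supported in the top-left $n\times n$ block (it was $P$ on $X^n$ padded by zeros), we have $P = (U_i V_j)_{1\le i,j\le n}$ as an element of $M_n(\algebra)$. Now I compute $PX^n$: for $x \in X$, the vector $(U_1 x,\ldots,U_n x)$ is the image under $U$ (restricted to the first coordinate) of $(x,0,\ldots,0)$, which lies in $I_1 X^N$, so $(U_1x,\ldots,U_nx) \in PX^N$, and since the range is in the first $n$ coordinates it lies in $PX^n$. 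Conversely the range of $P$ is exactly $\{U(y,0,\ldots,0) : y \in X\}$ truncated, i.e. $\{(U_1 x, \ldots, U_n x) : x \in X\}$, giving the first claimed equality $PX^n = \{(U_1x,\ldots,U_nx): x\in X\}$. For the relation $U_1 V_1 + \cdots + U_n V_n = Id_X$: the identity $U^{-1}U = I_N$ gives, in the $(1,1)$ entry, $\sum_{k=1}^N W_{1k} U_{k1} = Id_X$, i.e. $\sum_{k=1}^N V_k U_k = Id_X$ where for $k > n$ the entries $U_{k1}$ may be nonzero; but here I should instead use $U U^{-1} = I_N$ read in the $(1,1)$ entry after conjugating by $I_1$, or more directly use that $P$ is a projection: $P^2 = P$ restricted appropriately. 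The cleanest route: since $P$ is genuinely supported on the $n\times n$ block and equals $(U_iV_j)_{i,j\le n}$, idempotency $P^2=P$ gives $\sum_{j=1}^n U_i V_j U_j V_k = U_i V_k$ for all $i,k$; pairing with the surjectivity/injectivity of the map $x \mapsto (U_i x)_i$ onto $PX^n$ and $y \mapsto V_k y$ one deduces $\sum_{j=1}^n V_j U_j = Id_X$, i.e. after relabeling $\sum_{j=1}^n U_j V_j = Id_X$ — here one must be a little careful about which product is the identity, but by symmetry of the two decompositions $P = U I_1 U^{-1}$ one obtains both $\sum U_j V_j$ and $\sum V_j U_j$ equal to $Id_X$ depending on the ordering convention, and the statement asks only for one of them.

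The main obstacle I anticipate is bookkeeping, not mathematics: ensuring that the padding from $n$ to $N$ coordinates does not introduce spurious terms in the sum $U_1V_1 + \cdots + U_nV_n$, and getting the left/right convention consistent so that it is exactly $\sum_{j=1}^n U_jV_j = Id_X$ rather than a sum running to $N$. The resolution is to observe that $\Psi(P)$, being conjugate in $M_\infty(A(\T))$ to a rank-one idempotent, can be taken conjugate by an invertible $M$ that we may further arrange (shrinking $N$ if needed, or by a direct argument in $M_n$) to witness similarity already inside $M_n(\algebra)$ once we know the rank is $1 \le n$; alternatively one simply notes that $P(y_1,\ldots,y_n) = 0$ whenever all $y_i = 0$ forces the relevant partial sums to truncate at $n$. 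I would phrase the proof so as to carry the invertible $U$ as an element of $GL_n(\algebra)$ from the start, citing that a rank-$m$ idempotent in $M_n$ with $m \le n$ is similar to $I_m$ already within $M_n$ (a standard fact, true over any unital ring where it holds over the coefficient algebra, here $A(\T)$ via Lemma \ref{similarity} applied with $N = n$), which makes the truncation issue disappear and leaves only the one-line matrix-entry computations above.
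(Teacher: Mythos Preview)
Your approach is exactly the paper's: use the proof of Proposition~\ref{multi} to write $P = U I_1 U^{-1}$, take $(U_i)$ to be the first column of $U$ and $(V_j)$ the first row of $U^{-1}$, and read off $P = (U_iV_j)_{i,j}$ and the description of $PX^n$. The paper also asserts (without details) that one may take $U \in GL_n(\mathcal A)$, just as you suggest in your final paragraph.

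There is, however, a genuine gap in your derivation of $\sum_{j=1}^n U_jV_j = Id_X$. From $U^{-1}U = I$ the $(1,1)$ entry gives $\sum_j V_jU_j = Id_X$, not $\sum_j U_jV_j$; your ``symmetry'' claim is wrong, since the $(1,1)$ entry of $UU^{-1}$ involves the first \emph{row} of $U$ and the first \emph{column} of $U^{-1}$, which are different elements entirely. Your alternative via $P^2=P$ yields only $U_i(\sum_j V_jU_j - Id)V_k = 0$ for all $i,k$, and your cancellation argument is not justified. The fix the paper uses is the one-line observation you are missing: the algebra $\mathcal A$ is \emph{abelian}, so $\sum_j V_jU_j = Id_X$ immediately gives $\sum_j U_jV_j = Id_X$. (Incidentally, the truncation from $N$ to $n$ also works directly: since the padded $P$ has range in the first $n$ coordinates, the description of the range as $\{(U_1y,\ldots,U_Ny):y\in X\}$ forces $U_k=0$ for $k>n$, so $\sum_{k=1}^N V_kU_k = \sum_{k=1}^n V_kU_k$.)
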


\begin{proof} By the above $P=U Id_X U^{-1}$ for some $U \in GL_N(\algebra)$ in the appropriate dimension $N$, but
it is easily checked that we may assume this dimension to be $n$ and therefore $U \in GL_n(\algebra)$.
 It follows
 that $P$ admits the matrix representation $$P=(U_i V_j)_{1 \leq i,j \leq n}$$
 with $\sum_i V_i U_i=Id_X$, where $(U_1,\ldots,U_n)$ is the first column of $U$ and $(V_1,\ldots,V_n)$ the
 first line of $U^{-1}$ and  therefore these operators belong to $\algebra$.
 Note also that $Id_X=U_1 V_1 + \cdots + U_n V_n$ since $\algebra$ is abelian.
  We have the formula $$P(x_1,\ldots,x_n)=(U_1 z, \ldots, U_n z)$$
 where $z=\sum_i V_i x_i$  and since $\sum_i V_i U_i=Id_X$, $z$
 takes all possible values in $X$. Therefore
 $$PX^n=\{ (U_1 x, \ldots, U_n x), x \in X\}.$$
 \end{proof}

\

A $1$-dimensional subspace of $\C^n$ generated by a vector $a$ is complemented
by the orthogonal projection $p(v)=\frac{<a,v>}{\|a\|^2} a$. 
In the next lemma we show how a similar result holds in $X^n$ for operators
in $M_n(\algebra)$. 
By  ${\rm diag}(M)$ we shall denote the diagonal block matrix operator on $X^n$ with 
 $M \in L(X)$ on the diagonal.
 For arbitrary $W \in \algebra$, we denote by $\overline{W}$ the 
 operator in $\algebra$ such that $\Psi(\overline{W})=\overline{\Psi(W)}$. That is,
 if $W=\sum_{n \in \Z} a_n S^n $, then $\overline{W}=\sum_{n \in \Z} \overline{a_{-n}} S^n$.
 We extend this definition in an obvious way to elements of $M_{m,n}(\algebra)$.
 Finally if $T \in M_{n,1}(A)$, then $T^t  \in M_{1,n}(A)$ denotes the transposed matrix of $T$.

\begin{lemma}\label{projection} Assume $T \in M_{1,n}(\algebra)$ is finitely singular from $X$ to $X^n$.
Then $TX$ is complemented in $X^n$ by the projection $P=B {\rm diag}(A^{-1})$,
where $A:=\overline{T}^t.T
\in \algebra$ and
$B=T.\overline{T}^t \in M_n(\algebra)$.
\end{lemma}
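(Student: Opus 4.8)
The plan is to verify directly that the operator $P=B\,{\rm diag}(A^{-1})$ prescribed in the statement is a bounded idempotent on $X^n$ whose range is exactly the image $TX=\{(T_1x,\dots,T_nx):x\in X\}$ of $T$; writing $T$ through its entries $T_1,\dots,T_n\in\algebra$, we have $A=\sum_{i=1}^n\overline{T_i}T_i$ and $B=(T_i\overline{T_j})_{i,j}$. Since $T_1,\dots,T_n$ and their conjugates all lie in the commutative algebra $\algebra$, every step is a formal manipulation inside a commutative ring, with a single exception: one must first know that $A$ is invertible in $\algebra$, and this is where the hypothesis that $T$ is finitely singular is used. I expect this to be the only genuine obstacle.

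So the first step is to establish the invertibility of $A$. Applying $\Psi$, which is an algebra homomorphism satisfying $\Psi(\overline W)=\overline{\Psi(W)}$, gives $\Psi(A)(t)=\sum_{i=1}^n|\Psi(T_i)(t)|^2$ for $t\in\T$. Since $T$ is finitely singular it is not infinitely singular, so the contrapositive of Proposition \ref{abc}(1) shows that this nonnegative function vanishes nowhere on $\T$; hence $\Psi(A)$ is invertible in $C(\T)$, and then in $A(\T)$ by Wiener's Lemma (Proposition \ref{AAA}), so $A$ is invertible in $\algebra$ (when $X=X_S(\R)_\C$ the map $\Psi$ is only an algebra isomorphism onto $A(\T)$, but invertibility transfers all the same). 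Consequently ${\rm diag}(A^{-1})\in M_n(\algebra)\subseteq L(X^n)$, so $P$ is a bounded operator on $X^n$, with entries $P_{ij}=A^{-1}T_i\overline{T_j}$.

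The remaining step is the algebra. Using $\sum_k\overline{T_k}T_k=A$ and commutativity, $(P^2)_{ij}=\sum_k\big(A^{-1}T_i\overline{T_k}\big)\big(A^{-1}T_k\overline{T_j}\big)=A^{-1}T_i\overline{T_j}=P_{ij}$, so $P$ is idempotent. For the range, the formula $P(x_1,\dots,x_n)=(T_1z,\dots,T_nz)$ with $z=A^{-1}\sum_j\overline{T_j}x_j$ shows $PX^n\subseteq TX$, while substituting $x_j=T_jx$ gives $z=A^{-1}\sum_j\overline{T_j}T_jx=A^{-1}Ax=x$, so $P$ restricts to the identity on $TX$. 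Hence $P$ is a projection onto $TX$, and $TX$, as the range of a bounded idempotent, is closed and complemented in $X^n$, completing the proof. The one nontrivial ingredient remains the invertibility of $A$, for which finite singularity of $T$ was used precisely to rule out a zero of $\sum_i|\Psi(T_i)|^2$ on $\T$.
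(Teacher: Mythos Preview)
Your proof is correct and follows essentially the same approach as the paper: both establish invertibility of $A$ via Proposition~\ref{abc}(1) and Wiener's Lemma, then verify by direct computation in the commutative algebra $\algebra$ that $P$ maps into $TX$ and fixes it. The only cosmetic difference is that you also check $P^2=P$ explicitly, whereas the paper deduces idempotence immediately from the two range properties (any operator mapping into a subspace and restricting to the identity on it is automatically a projection onto it).
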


\begin{proof}
 Let us see $T$ as a column
$$T=\begin{pmatrix} T_1 \\ \vdots \\ T_n \end{pmatrix},$$
where each $T_i$ is an operator in $\algebra$.
Since $T$ is finitely singular, 
 $\Psi(A)=\sum_j |\Psi(T_i)|^2$ does not vanish on $\T$, by Proposition \ref{abc} (a). 
 So it is invertible (in $A(\T)$ by Wiener's lemma), and so $A$ is invertible in  $\algebra$.
The map takes values in $TX$ and we claim that $PT=T$, implying that $P$ is a projection onto $TX$.
The claim follows from the computation
 (using that $\algebra$ is abelian)
$$PT =T\overline{T}^t {\rm diag}(A^{-1})T
=T\overline{T}^t (A^{-1} T_i)_i=
T \sum_i (\overline{T_i} A^{-1} T_i)=T AA^{-1}=
T.$$
\end{proof}

We now can prove the main technical result of this section.

\begin{prop}\label{important} Assume $m, n \in \N$. Let $Y$ be an infinite codimensional subspace of $X$.
Then there is no isomorphism between a complemented subspace of $X^m$ and a
 subspace of $Y^n$.
\end{prop}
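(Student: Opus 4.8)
The plan is to combine the structure theorem for complemented subspaces of $X^m$ (Proposition \ref{multi}) with the fact that $X$ does not embed into any power of an infinite codimensional subspace $Y$ of $X$. By Proposition \ref{multi}, any complemented subspace $E$ of $X^m$ is isomorphic to $X^q$ for some $q \leq m$; moreover, tracking the proof of \ref{multi} and applying Lemma \ref{tecnico}, we may assume after a finite dimensional perturbation that $E = PX^m$ for a projection $P \in M_m(\algebra)$, and that $E$ contains a complemented copy of $X$ realized as $\{(U_1 x, \ldots, U_m x) : x \in X\}$ for operators $U_i \in \algebra$ with $\sum_i U_i V_i = Id_X$. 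So it suffices to show that $X$ itself does not embed into $Y^n$. Suppose for contradiction that $R \colon X \to Y^n \subseteq X^n$ is an isomorphic embedding.

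\textbf{Key steps.} First I would reduce $R$ to an operator in $M_{n,1}(\algebra)$: writing $R = (R_1, \ldots, R_n)^t$ with $R_i \in L(X)$, the operator $\Phi(R) = (\Phi(R_1), \ldots, \Phi(R_n))^t$ differs from $R$ by a strictly singular operator, so since $R$ is an isomorphic embedding (in particular finitely singular), $T := \Phi(R) \in M_{n,1}(\algebra)$ is also finitely singular from $X$ to $X^n$. By Proposition \ref{abc}(1), $\sum_i |\Psi(R_i)|^2$ does not vanish on $\T$. Now apply Lemma \ref{projection}: $TX$ is complemented in $X^n$ by an explicit projection $P = B\,{\rm diag}(A^{-1})$ with $A = \overline{T}^t T \in \algebra$ invertible and $B = T\overline{T}^t \in M_n(\algebra)$; and since $TX$ is a strictly singular perturbation of $RX$, and $RX \simeq X$ is isomorphic to its finite dimensional perturbations, $TX$ is complemented in $X^n$ and isomorphic to $X$. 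The point is that $TX$ is simultaneously a complemented copy of $X$ in $X^n$ (via $T$, whose entries lie in $\algebra$) and a subspace of the image of the original embedding, which lands in $Y^n$.

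\textbf{The obstacle.} The main difficulty is to derive a contradiction from ``$X$ embeds into $Y^n$'' using only that $Y$ is infinite codimensional in $X$. I would argue as follows. Since $TX$ is complemented in $X^n$ and isomorphic to $X$, we may reroute: there is $V \in M_{1,n}(\algebra)$ (built from the rows of the inverse appearing in Lemma \ref{tecnico}-type factorization of $P$) with $VT = Id_X$, so that $T$ splits the inclusion. Composing, the original embedding $R \colon X \to Y^n$ gives that $X$ is isomorphic to a complemented subspace of $Y^n$. Iterating the coordinate structure of $Y^n \subseteq X^n$ and using that each coordinate of $Y$ is an infinite codimensional subspace of $X$, together with Corollary \ref{primen} (operators on $X^n$ are Fredholm or infinitely singular, so $X^n$ is not isomorphic to an infinite codimensional subspace of itself), one shows that a copy of $X$ complemented in $Y^n$ forces, after projecting to coordinates, a copy of $X$ inside some $Y$ — contradicting that $Y$ has infinite codimension in $X$, since by Proposition \ref{GMbis} any copy of $X$ in $X$ is finite codimensional. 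I expect the delicate point to be the bookkeeping that passes from ``complemented in $Y^n$'' down to ``embeds in $Y$'': this needs the fact (to be extracted from the proof of Proposition \ref{multi}, i.e.\ the triviality of $K_1(\C)$ and Lemma \ref{similarity}) that a complemented copy of $X$ inside a power splits off as one coordinate up to an invertible change of basis in $M_n(\algebra)$, and then that this invertible can be arranged to respect the decomposition $Y^n \subseteq X^n$ well enough to conclude.
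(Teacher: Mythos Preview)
Your reduction to the case ``$X$ embeds in $Y^n$'' via Proposition \ref{multi}, and the passage from $R$ to $T=\Phi(R)\in M_{n,1}(\algebra)$ using Lemma \ref{projection}, are exactly what the paper does. The gap is in your final paragraph.

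First, a small but real error: $TX$ is \emph{not} contained in $Y^n$. You have $RX\subseteq Y^n$, but $T=\Phi(R)$ differs from $R$ by a strictly singular operator and there is no reason for $T$ to take values in $Y^n$. So the sentence ``$TX$ is simultaneously \ldots\ a subspace of the image of the original embedding, which lands in $Y^n$'' is false as written.

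More importantly, your attempt to pass from ``$X$ is complemented in $Y^n$'' to ``$X$ embeds in $Y$'' is where the argument breaks down. You propose to use an invertible in $M_n(\algebra)$ that ``respects the decomposition $Y^n\subseteq X^n$'', but elements of $\algebra$ have no reason whatsoever to preserve $Y$ (indeed $Y$ is an arbitrary infinite codimensional subspace), so such an invertible need not exist. There is no $K$-theoretic mechanism here that sees $Y$.

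The paper sidesteps this entirely. It never tries to produce a copy of $X$ inside $Y$. Instead it constructs a \emph{single Fredholm operator on $X$ whose image lies in $Y$}, which is an immediate contradiction. Concretely: apply Lemma \ref{tecnico} to the projection $P$ onto $TX$ to obtain $U_i,V_i\in\algebra$ with $PX^n=\{(U_1x,\ldots,U_nx):x\in X\}$ and $\sum_i U_iV_i=Id_X$. Since the column $(U_1,\ldots,U_n)^t$ lies in $PX^n=TX$, one has
\[
\begin{pmatrix}U_1\\\vdots\\U_n\end{pmatrix}=P\begin{pmatrix}U_1\\\vdots\\U_n\end{pmatrix}=T\cdot C=R\cdot C+s\cdot C
\]
for the appropriate $C\in M_{1,1}(\algebra)$ (namely $C=\overline{T}^t{\rm diag}(A^{-1})(U_i)_i$) and $s:=T-R$ strictly singular. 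Since $R$ takes values in $Y^n$, each $U_i$ equals a strictly singular $s_i$ plus an operator with range in $Y$. Then $\sum_i(U_i-s_i)V_i$ maps $X$ into $Y$, yet it equals $Id_X-\sum_i s_iV_i$, a strictly singular perturbation of the identity, hence Fredholm with finite codimensional image --- contradicting that $Y$ has infinite codimension. This is the missing idea: use the algebraic identity $\sum U_iV_i=Id_X$ together with the splitting $T=R+s$ to land a Fredholm operator in $Y$, rather than trying to unscramble the coordinates of $Y^n$.
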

 
 \begin{proof} Assume there is such an isomorphism.
 By Proposition \ref{multi}, it follows that there exists  an isomorphic embedding $R$ of $X$ into $Y^n$.
We denote $T=\Phi(R)$, and since $R-T$ is strictly singular, we note that $T$ is finitely singular.
So by Lemma \ref{projection}, $P=T\overline{T}^t {\rm diag}(A^{-1})$ is a projection onto $TX$,
where $A:=\overline{T}^tT$.

Let $U_i, V_i$ be given for $P$ by Lemma \ref{tecnico}. Therefore, and letting $s:=T-R$,
$$\begin{pmatrix} U_1 \\ \vdots \\ U_n \end{pmatrix}
=P \begin{pmatrix} U_1 \\ \vdots \\ U_n \end{pmatrix}
=T \overline{T}^t {\rm diag}(A^{-1}) \begin{pmatrix} U_1 \\ \vdots \\ U_n \end{pmatrix}
=s \overline{T}^t {\rm diag}(A^{-1}) \begin{pmatrix} U_1 \\ \vdots \\ U_n \end{pmatrix}
+R  \overline{T}^t {\rm diag}(A^{-1}) \begin{pmatrix} U_1 \\ \vdots \\ U_n \end{pmatrix}
.$$

Since $s$ is strictly singular, the operator $\begin{pmatrix} U_1 \\ \vdots \\ U_n \end{pmatrix}$ is therefore the sum of a strictly singular operator
$\begin{pmatrix} s_1 \\ \vdots \\ s_n \end{pmatrix}$ and of an operator with values in $Y^n$,
which implies that $U_i-s_i$ takes values in $Y$ for $i=1,\ldots, n$.
Then the operator $\sum_i (U_i-s_i)V_i$ takes values in $Y$ and on the other hand
it is equal to $\sum_i U_i V_i -\sum_i s_i V_i=Id_X-\sum_i s_i V_i$. We would therefore
obtain a strictly singular perturbation of the identity with values in an infinite
codimensional subspace of $X$, a contradiction with the stability properties
of the Fredholm class.
\end{proof}

\
 
 We finally arrive to the objective of this section, the {\em Proof of Proposition \ref{mainprop}}: if
 $Y$ is an infinite codimensional subspace of $X_S$ (real or complex), then there is no isomorphism between a complemented subspace of $X_S^m$ and a
 subspace of $Y^n$.

\begin{proof} In the complex case this is just Proposition \ref{important} for $X=X_S(\C)$.
 In the real case, if $Z$ is a complemented subspace of $X_S^m$ isomorphic to a subspace
 of $Y^n$, then $Z_\C$ is a complemented subspace of $(X_S)_\C^m$ isomorphic to a subspace
 of $Y_\C^n$, and therefore $Z_\C$ (and $Z$) must be finite dimensional by
 Proposition \ref{important} in the case $X=(X_S(\R))_\C$.
\end{proof}

\section{Comments and problems}

Our results leave open the following new version of \cite{P} Problem 2.2.8.

\begin{quest}\label{q}
For which space ideals $A$ does 
there exist a largest operator ideal $U$ with $A = \Space(U)$?
\end{quest}

Recall that a space ideal is a class of spaces containing the finite dimensional ones and stable
under taking direct sums and complemented subspaces. For any ideal $\Ideal$,
$\Space(\Ideal)$ is a space ideal, \cite{P} Theorem 2.1.3, and conversely a space ideal $A$ always coincides with
$\Space(\Op(A))$, \cite{P} Theorem 2.2.5.
And our main result is that the answer to Question \ref{q} is
negative for the space ideal $F$ of finite dimensional spaces.

\

The "next" natural example seems to be the space ideal $H$ of separable
spaces isomorphic to a Hilbert space (finite or infinite dimensional). So
it seems natural to ask:

\begin{quest}
 Does there exist a largest operator ideal $\Ideal$ with $H=\Space(\Ideal)$?
\end{quest}

Pietsch (\cite{P} 4.3.7) considered a natural candidate for the largest operator ideal $V$ such that
$\Space(V)=\Space(\Ideal)$; namely $\Ideal^{rad}$, defined by
$$S \in \Ideal^{rad}(X,Y) \Leftrightarrow \forall L \in L(Y,X),  W(I_X-LS)=I_X-T$$
for some $W \in L(X)$ and some $T \in {\Ideal}(X)$ (\cite{P} Definition 4.3.1).
In particular $F^{rad}=\In$. In particular he proved
that $\Space(\Ideal)=\Space(\Ideal^{rad})$ always holds, \cite{P} Proposition 4.3.6.
Here it does not even seem to be clear (to the author) what the ideal $H^{rad}$ is.



\

It may be amusing to observe that it follows from Proposition \ref{multi} that the class $A$ of spaces
isomorphic to some power of $X_S$ is a space ideal. Therefore $A={\rm Space}(\Opp(X_S))$ by \cite{P} Theorem 2.2.5.

\
 
Some natural comments and questions about examples from the first part of the paper are also included below.

\begin{quest} Are the spaces $Z_2(\alpha)$ and $\overline{Z_2(\alpha)}$ from \cite{K} projectively incomparable for $\alpha \neq 0$? essentially incomparable?
\end{quest}

Ferenczi-Galego \cite{FG1} prove that if a space is essentially incomparable with is conjugate,
then it does not contain a complemented subspace with an unconditional basis. 
For $Z_2(\alpha)$ (more generally, for twisted Hilbert spaces),  by Kalton \cite{KIMJ},
  a  complemented subspace with an unconditional basis would have to be hilbertian. 
  We do not know whether $Z_2(\alpha)$ contains a complemented Hilbertian copy (for $Z_2$ this
  is impossible, by \cite{KP} Corollary 6.7).
It may be worth pointing out that the above result from \cite{FG1} actually holds (with the same proof) for projective incomparability:

\begin{prop} If a complex space $X$ is projectively incomparable with its conjugate, 
then it does not contain a complemented subspace with an unconditional basis.
\end{prop}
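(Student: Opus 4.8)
The plan is to mimic the original Ferenczi--Galego argument from \cite{FG1}, replacing ``essentially incomparable'' by ``projectively incomparable'' throughout, and checking that the key step survives this weakening. Suppose, for contradiction, that a complex space $X$ is projectively incomparable with $\overline{X}$ and that $E$ is a complemented subspace of $X$ with an unconditional basis $(e_n)_{n}$. Fix a projection $\pi\colon X\to X$ with range $E$. The idea is to build, out of the unconditional structure of $E$, an operator between $X$ and $\overline{X}$ (or between complemented subspaces of them) which is an isomorphism onto a complemented subspace, contradicting $L(X,\overline{X})=\Imp(X,\overline{X})$.

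Concretely, first I would recall that on a space with unconditional basis, the diagonal operators $D_\theta\colon \sum a_n e_n \mapsto \sum \theta_n a_n e_n$ for $\theta_n\in\{-1,1\}$ (or more generally for bounded scalar multipliers) are uniformly bounded $\R$-linear, indeed $\C$-linear, operators on $E$. The point exploited in \cite{FG1} is that such a diagonal operator, viewed appropriately, intertwines $E\subseteq X$ with a copy of $E$ sitting inside $\overline{X}$: multiplication by $i$ on the coordinates, i.e. the map $\sum a_n e_n \mapsto \sum i\,a_n e_n$, is $\C$-linear from $E$ to $E$ but becomes, after the conjugation, an isomorphism realizing $E$ as a complemented subspace of $\overline{X}$ in a way not available through a genuine $\C$-linear operator $X\to\overline{X}$. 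The second step is then to package this: compose the inclusion $E\hookrightarrow X$, the ``conjugating diagonal'' isomorphism $E\to \overline{E}\subseteq\overline{X}$, and the complemented-subspace structure, to produce $T\in L(X,\overline{X})$ whose restriction to the complemented subspace $E$ of $X$ is an isomorphism onto the complemented subspace $\overline{E}$ of $\overline{X}$. This directly witnesses $T\notin\Imp(X,\overline{X})$, contradicting projective incomparability.

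The main obstacle, and the place where one must be careful, is exactly that projective incomparability is a strictly weaker hypothesis than essential incomparability: the original proof may produce an operator that is merely \emph{essential} (non-Fredholm perturbation data) rather than one that is outright an isomorphism between complemented subspaces. So the real work is to verify that the operator constructed from the unconditional basis is not just essential but genuinely improjective-violating, i.e. restricts to an isomorphism onto a complemented range. Here the unconditionality is what saves us: the natural candidate operator is, up to the conjugation bookkeeping, the identity on $E$ composed with the fixed projection $\pi$, so its restriction to $E$ is literally invertible and its range is the complemented subspace $\overline{E}$. Thus one checks that the ``Fredholm'' language in \cite{FG1} can be upgraded to ``isomorphism onto a complemented subspace'' with no change beyond tracking the projection $\pi$, which is precisely the remark ``holds (with the same proof) for projective incomparability.''

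Finally I would assemble these pieces: state that projective incomparability of $X$ and $\overline{X}$ forces every $T\in L(X,\overline{X})$ to be improjective; observe that the existence of a complemented $E\subseteq X$ with unconditional basis yields, via the coordinatewise conjugation described above, an operator $T\in L(X,\overline{X})$ restricting to an isomorphism of the complemented subspace $E$ onto the complemented subspace $\overline{E}$ of $\overline{X}$; and conclude that this $T$ is not improjective, a contradiction. This gives the statement.
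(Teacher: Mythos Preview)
Your approach is the same as the paper's, and your final paragraph is correct, but the middle of your write-up misidentifies the key map. The map ``multiplication by $i$ on the coordinates,'' i.e.\ $\sum a_n e_n\mapsto\sum i\,a_n e_n$, is just $i\cdot\mathrm{Id}_E$; it is $\C$-linear $E\to E$ but is \emph{not} $\C$-linear as a map $E\to\overline{E}$, and it plays no role here. The correct map is the one you name only at the end, coordinatewise conjugation $c:\sum a_n e_n\mapsto\sum\overline{a_n}\,e_n$. One checks directly that $c$ is $\C$-linear from $E$ to $\overline{E}$ (the conjugate scalar action on the target exactly absorbs the conjugate-linearity of $c$ viewed as a map $E\to E$), and $c$ is a bounded isomorphism precisely because the basis is unconditional. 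Since $E$ is complemented in $X$, the same set $\overline{E}$ is complemented in $\overline{X}$ by the conjugate projection, so $X$ and $\overline{X}$ share isomorphic infinite-dimensional complemented subspaces, contradicting projective incomparability. That two-line observation is the whole proof in the paper; your extended discussion of upgrading ``essential'' to ``improjective-violating'' is unnecessary, as no Fredholm-type argument ever enters.
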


\begin{proof} If $Y$ is a subspace of $X$ with an unconditional basis $(e_n)$, then
$\overline{Y}$ is a subspace of $\overline{X}$ which is isomorphic to $Y$ by the map
$\sum_i \lambda_i e_i \mapsto \sum_i \overline{\lambda_i} e_i$. If $Y$ is complemented in $X$
then $\overline{Y}$ is complemented in $\overline{X}$. \end{proof}

\

{\em Acknowledgements:} We warmly thank Manuel Gonz\'alez for drawing our attention to the theory of proper
operator ideals and the questions of Pietsch, for suggesting to compare the real and
complex versions of classical ideals, as well as for useful comments on a first draft of this paper.

\end{document}